\numberwithin{equation}{section}
\newtheorem{theorem}{Theorem}
\theoremstyle{definition}
\newtheorem{definition}{Definition}[section]
\newtheorem{openproblem}{Open Problem}[section]
\theoremstyle{remark}
\newcommand{\abs}[1]{\lvert#1\rvert}
\newcommand{\Bigabs}[1]{\Bigl\lvert#1\Bigr\rvert}
\newcommand{\dualprod}[2]{\langle #1 , #2 \rangle}
\newcommand{\st}{\: :\:}
\newcommand{\norm}[1]{\lVert#1\rVert}
\newcommand{\R}{\mathbf{R}}
\newcommand{\dif}{\,\mathrm{d}}
\newcommand{\N}{\mathbf{N}}
\newcommand{\Lin}{\mathcal{L}}
\DeclareMathOperator{\supp}{supp}
\DeclareMathOperator{\Div}{div}
\DeclareMathOperator{\Curl}{curl}
\DeclareMathOperator{\rank}{rank}
\author{Jean Van Schaftingen}
\address{Universit\'e catholique de Louvain\\
Institut de Recherche en Math\'ematique et Physique (IRMP)\\
Chemin du Cyclotron 2 bte L7.01.01\\
1348 Louvain-la-Neuve\\
Belgium}
\email{Jean.VanSchaftingen@uclouvain.be}
\keywords{Critical Sobolev spaces; div-curl system; Hodge decompositions for Sobolev differential forms; Calder\'on--Zygmund estimates; circulation integrals; Gagliardo--Nirenberg--Sobolev inequality; Hardy inequality; fractional Sobolev spaces; Lorentz-Sobolev spaces; Sobolev--Slobodetski\u \i{} spaces; Besov spaces; Triebel--Lizorkin spaces; boundary estimates; canceling differential operator; graded stratified nilpotent Lie groups; nonisotropic Sobolev space; functions of bounded mean oscillation (BMO); strong charges}
\subjclass%
{35B65 (26D15, 35A23, 35B45, 35R03, 43A80, 46E35, 58A10)}
\title%
[Limiting Bourgain-Brezis estimates for systems]%
{Limiting Bourgain-Brezis estimates for systems: theme and variations}
\begin{document}

\begin{abstract}
J.\thinspace{}Bourgain and H.\thinspace{}Brezis have obtained in 2002 some new and surprising estimates for systems of linear differential equations, dealing with the endpoint case \(\mathrm{L}^1\) of singular integral estimates and the critical Sobolev space \(\mathrm{W}^{1, n} (\R^n)\).
This paper presents an overview of the results, further developments over the last ten years and challenging open problems.
\end{abstract}

\maketitle

\section{Theme}

\subsection{Limiting Hodge theory for Sobolev forms}
The study of limiting estimates for systems starts from the following problem: given a function  \(g \in \mathrm{L}^n (\R^n; \R^n)\) find the best regularity that a vector field \(u : \R^n \to \R^n\) such that 
\begin{equation}
\label{eqDivergence}
  \Div u = g \qquad \text{in \(\R^n\)}
\end{equation}
can have.

If \(n \ge 2\), the equation \eqref{eqDivergence} is strongly \emph{underdetermined}. 
The standard way of finding a solution \(u\) consists in lifting the undeterminacy by solving the system
\begin{equation}
\label{eqDivCurl}
\left\{
\begin{aligned}
  \Div u &= g \qquad \text{in \(\R^n\)},\\
  \Curl u & = 0 \qquad \text{in \(\R^n\)},
\end{aligned}
\right.
\end{equation}
where \(\Curl u = Du - (Du)^*\). 
By the classical Calder\'on--Zygmund theory of singular integrals \cite{CalderonZygmund1952} (see also \cite{Stein1970}), there exists a function \(v \in \mathrm{W}^{2, n}_\mathrm{loc} (\R^n)\) that satisfies
\[
 -\Delta v = g \qquad \text{in \(\R^n\)}
\]
and 
\[
 \norm{D^2 v}_{\mathrm{L}^n} \le C_n \norm{g}_{\mathrm{L}^n}.
\]
In particular, the vector-field \(u = \nabla v\) solves the problem \eqref{eqDivCurl}, and thus also the original problem \eqref{eqDivergence}, and \(u\) satisfies the estimates
\[
  \norm{D u}_{\mathrm{L}^n} \le C_n \norm{g}_{\mathrm{L}^n}.
\]
In general, vector fields in the Sobolev space \(W^{1, n} (\R^n; \R^n)\) need not be bounded functions (see for example \citelist{\cite{Brezis2011}*{remark 9.16}\cite{AdamsFournier2003}*{remark 4.43}}). This is also not the case for our solution \(u\): L. Nirenberg has given as a counterexample the data \(g = -\Delta v\) with \(v (x) = x_1 (\log \abs{x})^\alpha\zeta (x)\), where \(\zeta\) is a suitable cut-off function and \(\alpha \in (0, \frac{n}{n - 1})\) \cite{BourgainBrezis2003}*{remark 7} (see also \cite{AdamsFournier2003}*{example 4.44}).

As this solution of the underdetermined system \eqref{eqDivCurl} is merely one out of infinitely many, we can still hope that \eqref{eqDivergence} has \emph{another solution that is bounded}.
J.\thinspace{}Bourgain and H.\thinspace{}Brezis have constructed such solutions  \citelist{\cite{BourgainBrezis2002}*{proposition 1}\cite{BourgainBrezis2003}*{proposition 1}\cite{BourgainBrezis2004}*{theorem 4}\cite{BourgainBrezis2007}*{theorem 5}}.

\begin{theorem}
\label{theoremWeakHodge}
Let \(\ell \in \{1, \dotsc,  n - 1\}\). If \(g \in \mathrm{L}^n (\R^n; \bigwedge^{\ell + 1} \R^n)\) and \(dg = 0\) in the sense of distributions, then there exists \(u \in \mathrm{L}^\infty (\R^n; \bigwedge^\ell \R^n)\), such that 
\(du = g\) in the sense of distributions.
Moreover,
\[
  \norm{u}_{\mathrm{L}^{\infty}}  \le C \norm{g}_{\mathrm{L}^{n}}.
\]
\end{theorem}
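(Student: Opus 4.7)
The plan is to prove the theorem by duality, reducing it to an endpoint Sobolev inequality for closed \(\mathrm{L}^1\) forms. Via Hahn--Banach together with the wedge-pairing duality \(\mathrm{L}^1(\R^n; \bigwedge^{n-\ell}\R^n)^{*} \cong \mathrm{L}^\infty(\R^n; \bigwedge^{\ell}\R^n)\), the existence of \(u\) with \(du = g\) and \(\norm{u}_{\mathrm{L}^\infty} \le C \norm{g}_{\mathrm{L}^n}\) is equivalent to the a priori estimate
\[
\Bigabs{\int_{\R^n} g \wedge \phi} \le C \norm{g}_{\mathrm{L}^n} \norm{d\phi}_{\mathrm{L}^1}
\]
for every \(\phi \in C^\infty_c(\R^n; \bigwedge^{n-\ell-1}\R^n)\). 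The linear functional \(d\phi \mapsto (-1)^{\ell+1} \int g \wedge \phi\) is well-defined on the subspace of exact \(\mathrm{L}^1\)-forms because \(dg = 0\) together with the Poincar\'e lemma on \(\R^n\) forces the required consistency; if the a priori estimate holds, the functional extends by Hahn--Banach to all of \(\mathrm{L}^1(\R^n; \bigwedge^{n-\ell}\R^n)\), and Riesz representation produces the desired \(u\).

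To prove the dual estimate, I would split \(\phi\) via Hodge theory. Set \(\omega := d\phi\), which lies in \(\mathrm{L}^1(\R^n; \bigwedge^{n-\ell}\R^n)\) and automatically satisfies \(d\omega = 0\). Define \(\phi' := d^* \Delta^{-1}\omega\); the Hodge identity \(\phi = d\Delta^{-1}d^*\phi + d^*\Delta^{-1}d\phi\) then yields \(\phi = \phi' + d\eta\) with \(\eta := \Delta^{-1} d^* \phi\). A truncation argument combined with \(dg = 0\) shows that \(\int_{\R^n} g \wedge d\eta = 0\) (the boundary contributions from \(\eta\sim\abs{x}^{2-n}\) are controlled since \(g \in \mathrm{L}^n\)), so
\[
\Bigabs{\int_{\R^n} g \wedge \phi} = \Bigabs{\int_{\R^n} g \wedge \phi'} \le \norm{g}_{\mathrm{L}^n} \norm{\phi'}_{\mathrm{L}^{n/(n-1)}}.
\]

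The theorem therefore reduces to the Bourgain--Brezis endpoint Sobolev bound
\[
\norm{d^* \Delta^{-1}\omega}_{\mathrm{L}^{n/(n-1)}} \le C \norm{\omega}_{\mathrm{L}^1}
\]
valid for every closed \(\omega \in \mathrm{L}^1(\R^n; \bigwedge^{k}\R^n)\) with \(1 \le k \le n-1\), and this is the main obstacle. The operator \(d^*\Delta^{-1}\) is built from Riesz transforms, which by the usual Calder\'on--Zygmund obstruction fail to map \(\mathrm{L}^1\) into \(\mathrm{L}^{n/(n-1)}\) on arbitrary data; the closure condition \(d\omega = 0\) is what rescues the endpoint. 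Following Bourgain and Brezis, I would decompose \(\omega\) via Littlewood--Paley into dyadic frequency pieces and construct a primitive scale by scale, using \(d\omega = 0\) to telescope the low-frequency obstructions; an alternative route is a stopping-time construction on dyadic cubes that produces a Hardy-type representation of closed \(\mathrm{L}^1\) forms whose image under \(d^* \Delta^{-1}\) does lie in \(\mathrm{L}^{n/(n-1)}\).
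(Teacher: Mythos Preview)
Your duality reduction is correct and matches the paper's own strategy: the paper also derives theorem~\ref{theoremWeakHodge} from the linear estimate of theorem~\ref{theoremWeakEstimate} via classical Hodge theory, Hahn--Banach, and the \(\mathrm{L}^1\)--\(\mathrm{L}^\infty\) Riesz representation. Your intermediate step of writing \(\phi = d^*\Delta^{-1}\omega + d\eta\) and using \(dg = 0\) to kill the exact part is fine; it leaves you with exactly the inequality \(\norm{d^*\Delta^{-1}\omega}_{\mathrm{L}^{n/(n-1)}} \le C\norm{\omega}_{\mathrm{L}^1}\) for closed \(\omega\), which is the Hodge--Sobolev inequality \eqref{ineqHodgeSobolev}. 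But in the paper's logic that inequality is a \emph{consequence} of theorem~\ref{theoremWeakEstimate}, not an independent ingredient; by duality it is essentially the same statement. So your reduction has not moved the analytical difficulty, only reformulated it.

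The gap is in your last paragraph. The sketch ``decompose \(\omega\) via Littlewood--Paley \ldots{} construct a primitive scale by scale \ldots{} telescope the low-frequency obstructions'' is not a proof and does not describe an argument that is known to work in that form. Bourgain and Brezis do use a Littlewood--Paley construction, but for a different purpose: they prove the approximation lemma (theorem~\ref{theoremApproximation}), which says every \(\dot{\mathrm{W}}^{1,n}\) function can be approximated in \emph{some} directional derivatives by a bounded function, and then run an iteration (theorem~\ref{theoremMainClosed}). That machinery yields the strictly stronger theorem~\ref{theoremStrongHodge}; it is substantially harder than what you need, and your description does not match how it actually proceeds.

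What you are missing is the short, elementary route the paper takes to theorem~\ref{theoremWeakEstimate}: reduce to \(\ell = n-1\) and a single component \(\varphi = \varphi_n\, dx_n\); on each hyperplane \(\R^{n-1}\times\{t\}\) bound \(\int f \wedge \varphi_n\) once trivially by \(\norm{\varphi_n^t}_{\mathrm{L}^\infty}\) and once, via Stokes and \(df=0\), by \(\norm{D\varphi_n^t}_{\mathrm{L}^\infty}\); interpolate to get a \(C^{0,1/n}\) bound on the slice; invoke the Morrey--Sobolev embedding on \(\R^{n-1}\); and finish by H\"older in \(t\). This is a page-long argument requiring no Littlewood--Paley theory, and it closes exactly the hole in your proposal.
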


The first part of the statement can be written, in view of the classical Hodge theory \citelist{\cite{Iwaniec1995}\cite{Schwarz1995}}
\[
  d \bigl(\dot{\mathrm{W}}^{1, n} (\R^n; \textstyle{\bigwedge^\ell} \R^n)\bigr)
  \subset d \bigl(\mathrm{L}^\infty (\R^n; \textstyle{\bigwedge^\ell} \R^n)\bigr),
\]
where \(\dot{\mathrm{W}}^{1, n} (\R^n; \textstyle{\bigwedge^\ell} \R^n)\) denotes the homogeneous Sobolev space of weakly differentiable differential forms such that \(\abs{Du} \in L^n (\R^n)\).

In the theory of lifting of fractional Sobolev maps into the unit circle  \cite{BourgainBrezisMironescu2000},
theorem~\ref{theoremWeakHodge} has allowed to derive some local bound on the norm of the phase \(\norm{\varphi}_{\mathrm{L}^{n/(n - 1)}}\) in terms of \(\norm{e^{i \varphi}}_{H^{1/2}}\) \cite{BourgainBrezis2003}*{corollary 1}.
Theorem~\ref{theoremWeakHodge} was also used to reformulate a smallness assumption on a magnetic vector potential in \(\mathrm{L}^\infty (\R^n)\) as an assumption on the magnetic field in \(\mathrm{L}^n (\R^n)\) \cite{AbatangeloTerracini2011}*{p. 159}.

In comparison with the standard Hodge theory \citelist{\cite{Iwaniec1995}\cite{Schwarz1995}}, theorem~\ref{theoremWeakHodge} does not give any integrability information on the derivative \(D u\). 
J.\thinspace{}Bourgain and H.\thinspace{}Brezis have constructed a solution that satisfies \emph{both} the estimates of theorem~\ref{theoremWeakHodge} \emph{and} the classical estimates \citelist{\cite{BourgainBrezis2002}*{theorem 1}\cite{BourgainBrezis2003}*{theorem 1}\cite{BourgainBrezis2004}*{theorem 4}\cite{BourgainBrezis2007}*{theorem 5}}.

\begin{theorem}
\label{theoremStrongHodge}
Let \(\ell \in \{1, \dotsc,  n - 1\}\). If \(g \in \mathrm{L}^n (\R^n; \bigwedge^{\ell + 1} \R^n)\) and \(dg = 0\) in the sense of distributions, then there exists \(u \in \mathrm{L}^\infty (\R^n; \bigwedge^\ell \R^n)\), such that
\(du = g\) in the sense of distributions, \(u\) is continuous and \(Du \in \mathrm{L}^n (\R^n)\).
Moreover,
\[
  \norm{u}_{\mathrm{L}^{\infty}} + \norm{Du}_{\mathrm{L}^{n}} \le C \norm{g}_{\mathrm{L}^{n}}.
\]  
\end{theorem}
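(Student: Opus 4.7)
The plan is to construct a single solution that enjoys both estimates by merging the two canonical constructions of a primitive of \(g\). The first is the Hodge--Laplacian solution \(v := d^{*}(-\Delta)^{-1}g\); using \(dg = 0\) one checks that \(dv = g\), and classical Calder\'on--Zygmund theory yields \(\norm{Dv}_{\mathrm{L}^n} \le C\norm{g}_{\mathrm{L}^n}\), but \(v\) need not be bounded (Nirenberg's example). The second is the bounded primitive \(w\) from Theorem~\ref{theoremWeakHodge}, with \(dw = g\) and \(\norm{w}_{\mathrm{L}^\infty} \le C\norm{g}_{\mathrm{L}^n}\), but no control of \(Dw\). Since any two solutions of \(du = g\) differ by a closed form, the task reduces to finding a closed corrector \(\varphi \in \dot{\mathrm{W}}^{1,n}\) such that \(v + \varphi \in \mathrm{L}^\infty\), with both \(\norm{v+\varphi}_{\mathrm{L}^\infty}\) and \(\norm{D\varphi}_{\mathrm{L}^n}\) controlled by \(\norm{g}_{\mathrm{L}^n}\).

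The cleanest route, which I would try first, is to revisit the proof of Theorem~\ref{theoremWeakHodge} and verify that, suitably organized, it already yields both estimates simultaneously. Typically that proof rests on a Littlewood--Paley decomposition \(g = \sum_{j} g_j\) together with a finer dyadic refinement that exploits \(dg = 0\); for each block \(g_j\) one uses the classical Hodge primitive \(u_j = d^{*}(-\Delta)^{-1} g_j\), which obeys the sharp Calder\'on--Zygmund bound \(\norm{Du_j}_{\mathrm{L}^n} \le C \norm{g_j}_{\mathrm{L}^n}\). The Bourgain--Brezis input is that the series \(\sum u_j\) (or a suitable modification) also converges in \(\mathrm{L}^\infty\) thanks to the closedness constraint. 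Once this convergence is in hand, the \(\dot{\mathrm{W}}^{1,n}\) bound follows essentially for free from the per-block Calder\'on--Zygmund estimate combined with the almost orthogonality of the pieces. A less explicit alternative is an iteration: start from the classical solution \(v\), invoke Theorem~\ref{theoremWeakHodge} to obtain a bounded correction, measure the residual defect in \(\dot{\mathrm{W}}^{1,n}\), and iterate so that the defects decay geometrically in both norms simultaneously.

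The main obstacle is precisely this simultaneity: the spaces \(\mathrm{L}^\infty\) and \(\dot{\mathrm{W}}^{1,n}\) have incompatible scaling and duality properties, so the \(\mathrm{L}^\infty\) bound of Theorem~\ref{theoremWeakHodge} and the Hodge gradient bound cannot be combined by any soft functional-analytic argument; one must carry both norms through the construction from the outset. Once \(u \in \mathrm{L}^\infty \cap \dot{\mathrm{W}}^{1,n}\) is obtained, continuity is a byproduct: modulo an added closed form, \(u\) is an explicit Riesz-type potential of the \(\mathrm{L}^n\) datum \(g\), so the mollifications \(u * \rho_\varepsilon\) are smooth, uniformly bounded, bounded in \(\dot{\mathrm{W}}^{1,n}\), and converge locally uniformly to a continuous representative.
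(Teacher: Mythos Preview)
Your instinct that the construction must carry both norms simultaneously is right, and an iteration is indeed the paper's mechanism. But neither of your concrete routes lands.

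The iteration you propose via Theorem~\ref{theoremWeakHodge} has a genuine gap: that theorem gives a bounded \(w\) with \(dw=g\) but \emph{no} control whatsoever on \(Dw\), so there is no ``residual defect in \(\dot{\mathrm{W}}^{1,n}\)'' to measure, and nothing to iterate on in that norm. Your other route---revisiting the proof of Theorem~\ref{theoremWeakHodge} to extract a hidden gradient bound---also misfires here, since in this paper Theorem~\ref{theoremWeakHodge} is derived nonconstructively from the dual estimate (Theorem~\ref{theoremWeakEstimate}) via Hahn--Banach; there is no Littlewood--Paley construction inside it to revisit. Moreover, the block-by-block scheme \(u_j=d^*(-\Delta)^{-1}g_j\) you sketch is linear in \(g\), and Theorem~\ref{theoremNonlinear} rules out any bounded linear solution operator into \(\mathrm{L}^\infty\); whatever ``suitable modification'' produces the \(\mathrm{L}^\infty\) bound must be nonlinear, and that is precisely the missing idea.

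What the paper actually does is isolate the hard analysis in an \emph{approximation lemma} (Theorem~\ref{theoremApproximation}): given \(v\in\dot{\mathrm{W}}^{1,n}\) and \(\varepsilon>0\), a genuinely nonlinear Littlewood--Paley construction produces \(u\in\dot{\mathrm{W}}^{1,n}\cap\mathrm{L}^\infty\) with \(\norm{d(u-v)}_{\mathrm{L}^n}\le\varepsilon\norm{Dv}_{\mathrm{L}^n}\) and \(\norm{u}_{\mathrm{L}^\infty}+\norm{Du}_{\mathrm{L}^n}\le C_\varepsilon\norm{Dv}_{\mathrm{L}^n}\). The point your sketch does not supply is that this \emph{approximate} primitive already sits in both target spaces. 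One then starts from the classical Hodge solution \(v\) of \(dv=g\), applies the lemma with \(\varepsilon\) small enough, and iterates on the residual \(g-du\) exactly as in the proof of the open mapping theorem (this is packaged as Theorem~\ref{theoremMainClosed}, with the exterior derivative shown to be adcanceling because \(\ell\ge 1\)); geometric decay of the residual in \(\mathrm{L}^n\) forces convergence of the partial sums in \(\mathrm{L}^\infty\cap\dot{\mathrm{W}}^{1,n}\). So Littlewood--Paley and iteration are both present, but decoupled: the former lives entirely inside the approximation lemma, and the latter is a soft closed-range argument on top of it.
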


The first part of the statement can be written, in view of the classical Hodge theory \citelist{\cite{Iwaniec1995}\cite{Schwarz1995}} as
\[
  d \bigl(\dot{\mathrm{W}}^{1, n} (\R^n; \textstyle{\bigwedge^\ell} \R^n)\bigr)
  = d \bigl(\dot{\mathrm{W}}^{1, n} (\R^n; \textstyle{\bigwedge^\ell} \R^n) \cap \mathrm{L}^\infty (\R^n; \textstyle{\bigwedge^\ell} \R^n)\bigr),
\]
or as 
\begin{multline*}
  \dot{\mathrm{W}}^{1, n} (\R^n; \textstyle{\bigwedge^\ell} \R^n)
  = \dot{\mathrm{W}}^{1, n} (\R^n; \textstyle{\bigwedge^\ell} \R^n) \cap \mathrm{L}^\infty (\R^n; \textstyle{\bigwedge^\ell} \R^n)\\
  + d \bigl(\dot{\mathrm{W}}^{2, n} (\R^n; \textstyle \bigwedge^{\ell - 1} \R^n)\bigr),
\end{multline*}
that is, every \(\dot{\mathrm{W}}^{1, n}\)--Sobolev \(\ell\)--form is bounded up to an exact form.

When \(\ell = n - 1\), theorem~\ref{theoremStrongHodge} states that every \(g \in W^{1, n} (\R^n; \R^n)\) can be written as 
\begin{equation}
\label{eqRieszDecomposition}
  g = \mathcal{R} v,
\end{equation}
with \(v \in W^{1, n} (\R^n; \R^n) \cap L^\infty (\R^n; \R^n)\) and the vector Riesz transform is defined by its Fourier transform \(\widehat{\mathcal{R} v} (\xi) = i \xi \cdot v (\xi)/\abs{\xi}\). As noted by J. Bourgain and H. Brezis \cite{BourgainBrezis2003}, the decomposition \eqref{eqRieszDecomposition} is a refined version of the Fefferman-Stein decomposition \citelist{\cite{FeffermanStein1972}*{theorem 3}\cite{Uchiyama1982}} which states that  \(g \in \mathrm{BMO} (\R^n)\) if and only if it can be decomposed as 
\begin{equation*}
  g = w + \mathcal{R} v,
\end{equation*}
with \(w \in L^\infty (\R^n)\) and \(v \in L^\infty (\R^n; \R^n)\).

Theorem~\ref{theoremStrongHodge} has allowed to obtain uniform \(\mathrm{L}^{n/(n - 1)}\) estimates on the gradient of minimizers of the Ginzburg--Landau functional \citelist{\cite{BourgainBrezis2004}*{theorem 5}\cite{BourgainBrezis2007}*{theorem 21}} (see also \citelist{\cite{BourgainBrezisMironescu2004}*{theorem 11}\cite{BethuelOrlandiSmets2004}*{proposition 5.1}}).

\subsection{Bourgain--Brezis linear estimates}

The existence theorem~\ref{theoremWeakHodge} can be reformulated as a linear estimate \citelist{\cite{BourgainBrezis2004}\cite{BourgainBrezis2007}*{theorem 1\cprime}\cite{VanSchaftingen2004Divf}*{corollary 1.4}}.

\begin{theorem}
\label{theoremWeakEstimate}
Let \(\ell \in \{1, \dotsc, n - 1\}\). There exists \(C > 0\) such that for every \(f \in C^\infty_c (\R^n; \bigwedge^{\ell} \R^n)\) and every \(\varphi \in C^\infty_c (\R^n; \bigwedge^{n - \ell} \R^n)\), if \(df = 0\), then 
\[
  \Bigabs{\int_{\R^n} f \wedge \varphi}
  \le C \norm{f}_{\mathrm{L}^1} \norm{d \varphi}_{\mathrm{L}^n}.
\]
\end{theorem}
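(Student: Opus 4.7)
The plan is to reduce the estimate to Theorem~\ref{theoremWeakHodge} by a duality/integration-by-parts argument: as the passage preceding Theorem~\ref{theoremWeakEstimate} suggests, the existence result and the linear estimate are essentially reformulations of one another. Explicitly, the form \(g := d\varphi\) is closed (since \(d^2=0\)) and belongs to \(\mathrm{L}^n(\R^n;\bigwedge^{n-\ell+1}\R^n)\). Applying Theorem~\ref{theoremWeakHodge} with its parameter \(\ell\) replaced by \(n-\ell \in \{1,\dotsc,n-1\}\) yields \(\psi \in \mathrm{L}^\infty(\R^n;\bigwedge^{n-\ell}\R^n)\) with \(d\psi=d\varphi\) in the sense of distributions and
\[
  \norm{\psi}_{\mathrm{L}^\infty} \le C \norm{d\varphi}_{\mathrm{L}^n}.
\]

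The heart of the argument is then the identity
\[
  \int_{\R^n} f\wedge \varphi = \int_{\R^n} f \wedge \psi,
\]
after which the theorem follows immediately from the pointwise bound \(\bigabs{f\wedge\psi}\le\abs{f}\,\abs{\psi}\) and the \(\mathrm{L}^\infty\) estimate on \(\psi\). To prove the identity, I would set \(\eta := \varphi-\psi\), a locally bounded distribution of degree \(n-\ell\) satisfying \(d\eta=0\). Since \(\R^n\) is contractible, the Poincar\'e lemma for currents/distributions produces a distribution \(\alpha\) of degree \(n-\ell-1\) with \(\eta = d\alpha\); distributional integration by parts combined with the hypothesis \(df=0\) then gives
\[
  \int_{\R^n} f \wedge \eta = (-1)^{n-\ell}\dualprod{df}{\alpha} = 0,
\]
the pairings being well-defined because \(f \in C^\infty_c(\R^n;\bigwedge^{\ell}\R^n)\).

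I expect the most delicate point to be the appeal to the Poincar\'e lemma for distributions, but this is a classical fact for contractible open subsets of \(\R^n\) and hence does not present a real obstruction. The converse direction---recovering Theorem~\ref{theoremWeakHodge} from Theorem~\ref{theoremWeakEstimate}---can be carried out by a standard Hahn--Banach extension argument applied to the linear functional \(d\varphi \mapsto \int_{\R^n} f\wedge \varphi\) defined on the subspace of exact forms in \(\mathrm{L}^n\), which shows that the two statements are in fact logically equivalent.
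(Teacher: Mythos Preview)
Your argument is correct and matches one of the two proofs the paper gives: the equivalence argument immediately following the statement of Theorem~\ref{theoremWeakEstimate}. The only difference is cosmetic. To show that \(\int_{\R^n} f\wedge(\varphi-\psi)=0\), the paper invokes the Calder\'on--Zygmund Hodge theory to produce \(\zeta\in\dot{\mathrm W}^{2,n}\) solving \(d\zeta=\varphi-\psi\), \(d^*\zeta=0\), whereas you invoke the distributional Poincar\'e lemma. Both work, because \(f\in C^\infty_c\) makes the pairing with any distributional primitive legitimate; your route is arguably lighter since it avoids singular integrals.

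You should be aware, however, that the paper also supplies a second, genuinely different proof (labelled ``Direct proof of theorem~\ref{theoremWeakEstimate}'' in \S2) which does \emph{not} pass through Theorem~\ref{theoremWeakHodge} at all. That argument reduces to \(\ell=n-1\) and a single coordinate component \(\varphi=\varphi_n\,dx_n\), slices \(\R^n\) by hyperplanes \(\{x_n=t\}\), and on each slice obtains two bounds for \(\int_{\R^{n-1}\times\{t\}} f\wedge\varphi_n\): a trivial one by \(\|\varphi_n(\cdot,t)\|_{\mathrm L^\infty}\), and one via the Stokes--Cartan formula (using \(df=0\)) by \(\|D\varphi_n(\cdot,t)\|_{\mathrm L^\infty}\). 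Interpolating gives a H\"older seminorm bound, the Morrey--Sobolev embedding on \(\R^{n-1}\) converts this to \(\|D\varphi_n(\cdot,t)\|_{\mathrm L^n}\), and H\"older in \(t\) finishes. This direct proof is self-contained and elementary, and it is the one that generalizes readily to Triebel--Lizorkin spaces, homogeneous groups, and higher-order conditions; your duality route, by contrast, inherits whatever difficulty lies in proving Theorem~\ref{theoremWeakHodge} in the first place.
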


Theorem~\ref{theoremWeakEstimate} would be a consequence of a critical Sobolev embedding of \(\dot{\mathrm{W}}^{1, n} (\R^n)\) in \(\mathrm{L}^\infty (\R^n)\) which is well-known to fail (see for example \cite{Brezis2011}*{remark 9.16}). 
The estimate is on the integral of the form \(f \wedge \varphi\) and not of the density \(\abs{f \wedge \varphi}\); it results from a compensation phenomenon which is reminiscent of div-curl estimates \citelist{\cite{CoifmanLionsMeyerSemmes1993}\cite{Murat1978}\cite{Tartar1978}\cite{Tartar1979a}\cite{Tartar1979b}}.

When \(k = 1\), theorem~\ref{theoremWeakEstimate} is equivalent with the classical Gagliardo--Nirenberg--Sobolev estimate \citelist{\cite{Gagliardo1958}\cite{Nirenberg1959}*{p.\thinspace 125}} (see also \citelist{\cite{Brezis2011}*{theorem 9.9}\cite{AdamsFournier2003}*{theorem 4.31}\cite{Stein1970}*{V.2.5}\cite{Mazya2011}*{(1.4.14)}})
\begin{equation}
\label{inequalityGagliardoNirenbergSobolev}
 \norm{u}_{\mathrm{L}^{n / (n - 1)}} \le C \norm{D u}_{\mathrm{L}^1}.
\end{equation}

We explain how theorems~\ref{theoremWeakHodge} and \ref{theoremWeakEstimate} are equivalent \citelist{\cite{BourgainBrezis2007}*{remark 8}\cite{BourgainBrezis2004}} (see also \cite{VanSchaftingen2004Divf}).
First by theorem~\ref{theoremWeakHodge}, for every \(\varphi \in C^\infty_c (\R^n; \bigwedge^{n - \ell} \R^n)\), there exists \(u \in \mathrm{L}^\infty (\R^n; \bigwedge^{n - \ell} \R^n)\) such that \(du = d \varphi\) and \(\norm{u}_{\mathrm{L}^{\infty}}  \le C \norm{d \varphi}_{\mathrm{L}^{n}}\). Moreover, by the classical Calder\'on--Zygmund elliptic regularity estimates, there exists \(\zeta \in \dot{\mathrm{W}}^{2, n} (\R^n;\bigwedge^{n - \ell - 1} \R^n)\) such that 
\[
\left\{
\begin{aligned}
  d \zeta & = \varphi - u && \text{in \(\R^n\)},\\
  d^* \zeta & = 0 && \text{in \(\R^n\)},
\end{aligned}
\right.
\]
(\(d^* \zeta\) denotes the exterior codifferential of the differential form \(\zeta\)).
Hence,
\[
  \Bigabs{\int_{\R^n} f \wedge \varphi} =  
  \Bigabs{\int_{\R^n} f \wedge (u + d \zeta)}
  = \Bigabs{\int_{\R^n} f \wedge u}
  \le C \norm{f}_{\mathrm{L}^{1}}\norm{d \varphi}_{\mathrm{L}^{n}}.
\]

Conversely, if \(g \in \mathrm{L}^n (\R^n; \bigwedge^{\ell + 1} \R^n)\), and \(d g = 0\), by the classical Hodge theory in Sobolev spaces, there exists \(v \in \dot{\mathrm{W}}^{1, n} (\R^n; \bigwedge^{\ell}\R^n)\) such that 
\[
\left\{
\begin{aligned}
  d v & = g &&\text{in \(\R^n\)},\\
  d^* v & = 0 &&\text{in \(\R^n\)}.
\end{aligned}
\right.
\]
For every \(\psi \in C^\infty_c (\R^n; \bigwedge^{n - \ell - 1}\R^n)\), by theorem~\ref{theoremWeakHodge}
\[
  \Bigabs{\int_{\R^n} d \psi \wedge v} \le C \norm{d \psi}_{\mathrm{L}^{1}}\norm{D v}_{\mathrm{L}^{n}}
  \le C' \norm{d \psi}_{\mathrm{L}^{1}} \norm{g}_{\mathrm{L}^{n}}.
\]
By the classical Hahn-Banach theorem (see for example \cite{Brezis2011}*{corollary 1.2}) and the representation of linear functionals on \(\mathrm{L}^1\) (see for example \cite{Brezis2011}*{theorem 4.14}), there exists \(u \in \mathrm{L}^\infty (\R^n; \bigwedge^{\ell} \R^n)\) such that for every \(\psi \in C^\infty_c (\R^n; \bigwedge^{n - \ell - 1}\R^n)\),
\[
  \int_{\R^n} d \psi \wedge v = \int_{\R^n} d \psi \wedge u.
\]
By construction of \(u\), we conclude that for every
\[
  \int_{\R^n} d \psi \wedge u = \int_{\R^n} d \psi \wedge v = (-1)^{n - \ell} \int_{\R^n} \psi \wedge g,
\]
that is, \(d u = g\) in the sense of distributions.

Theorem~\ref{theoremWeakEstimate} was used to show that if \(g \in \mathrm{W}^{1, p} (\Omega, \R^n) \cap \mathrm{L}^q (\Omega; \R^n)\), \(\frac{1}{q} + \frac{n - 1}{p} = 1\) and if \(\det (Dg) = \Div \mu\), then the measure \(\abs{\mu}\) does not charge sets of null \(\mathrm{W}^{1, n}\)--capacity \cite{BrezisNguyen2011}*{proposition 2}. Theorem~\ref{theoremWeakEstimate} yields a representation of divergence-free measures in the study of limiting div-curl lemmas \citelist{\cite{BrianeCasadoDiazMurat2009}*{theorem 3.1}}.
Theorem~\ref{theoremWeakEstimate} also allows to obtain endpoint Strichartz estimate for the linear wave and Schr\"odinger equations with space divergence-free data \cite{ChanilloYung2012}.

As a consequence of theorem~\ref{theoremWeakHodge} and the classical elliptic regularity theory, we have a Gagliardo--Nirenberg--Sobolev inequality for forms \cite{BourgainBrezis2007}*{corollary 17} (see also \cite{LanzaniStein2005}): if \(\ell \in \{2, \dotsc, n - 2\}\), then
\begin{equation}
\label{ineqHodgeSobolev}
 \norm{u}_{\mathrm{L}^{n / (n - 1)}}\le C \bigl(\norm{du}_{\mathrm{L}^{1}}+ \norm{d^* u}_{\mathrm{L}^{1}}\bigr);
\end{equation}
the inequality still holds for \(\ell \in \{1, n - 1\}\) provided \(d^*u = 0\) if \(\ell = 1\) and \(du\) if \(\ell = n - 1\). In particular, there is no such estimate for \(n = 2\). 
The vanishing of \(du\) or \(d^*u\) can be replaced by an estimate in the real Hardy space \(\mathcal{H}^1 (\R^n)\) \cite{LanzaniStein2005}.
The inequality \eqref{ineqHodgeSobolev} was used in the Chern--Weil theory for Sobolev connections on Sobolev bundles \cite{Isobe2010}*{proposition 4.1}.
This family of inequalities can be extended to higher-order analogues of the exterior derivative \cite{Lanzani2013}.

The inequality \eqref{ineqHodgeSobolev} would be a consequence of the classical Gagliardo--Nirenberg--Sobolev inequality \eqref{inequalityGagliardoNirenbergSobolev} and of the Gaffney inequality
\[
  \norm{Du}_{\mathrm{L}^{1}}\le C \bigl(\norm{du}_{\mathrm{L}^{1}}+ \norm{d^* u}_{\mathrm{L}^{1}}\bigr);
\]
the latter inequality does not hold \citelist{\cite{Ornstein1962}\cite{BourgainBrezis2007}*{remark 1}} (see also \citelist{\cite{ContiFaracoMaggi2005}\cite{KirchheimKristensen2011}\cite{CKPaper}}).

Theorem~\ref{theoremWeakHodge} also allows to obtain estimates when the classical Calder\'on--Zygmund theory fails: for every \(u \in C^\infty_c (\R^n; \R^n)\), if \(\Div u = 0\), then \citelist{\cite{BourgainBrezis2004}*{corollary 1 and remark 5}\cite{BourgainBrezis2007}*{theorem 2}}
\begin{equation}
\label{ineqLaplaceSobolev}
 \norm{D u}_{\mathrm{L}^{n / (n - 1)}}\le C \norm{\Delta u}_{\mathrm{L}^{1}}.
\end{equation}
Without the divergence-free condition, this estimate fails when \(n > 1\), as can be seen by taking \(u\) to be an approximation of the Green function of Laplacian on \(\R^n\). 
Even under divergence-free condition, the inequality
\[
 \norm{D^2 u}_{\mathrm{L}^{1}}\le C \norm{\Delta u}_{\mathrm{L}^{1}}
\]
does not hold \citelist{\cite{Ornstein1962}\cite{BourgainBrezis2007}*{remark 1}} (see also \citelist{\cite{ContiFaracoMaggi2005}\cite{KirchheimKristensen2011}\cite{CKPaper}}).
The estimate \eqref{ineqLaplaceSobolev} would be a consequence of the latter inequality combined with the Gagliardo--Nirenberg--Sobolev inequality \ref{inequalityGagliardoNirenbergSobolev}.

If \(\dot{\mathrm{W}}^{-k, p} (\R^n)\) is the set of distributions which are \(k\)-th derivatives of \(\mathrm{L}^p\) functions for \(k \in \N\) and \(p \in (1, \infty)\), that is, the set of distributions \(f\) such that 
\[
 \norm{f}_{\dot{\mathrm{W}}^{-k, p}} = \sup \{ \dualprod{f}{\varphi} : \varphi \in C^\infty_c (\R^n) \text{ and }\norm{D \varphi}_{\mathrm{L}^{p / (p - 1)}} \le 1 \} < \infty.
\]
the estimate of theorem~\ref{theoremWeakEstimate} can be rewritten as
\[
 \norm{f}_{\dot{\mathrm{W}}^{-1,n/(n - 1)}}
 \le C \norm{f}_{\mathrm{L}^1}.
\]

It is known that when \(n \ge 2\), \(\mathrm{L}^1 (\R^n) \not \subset \dot{\mathrm{W}}^{-1,n/(n - 1)} (\R^n)\).
J.\thinspace{}Bourgain and H.\thinspace{}Brezis have characterized by their divergence the vector fields \(f \in \mathrm{L}^1(\R^n; \R^n)\) that are in \(\mathrm{W}^{-1,n/(n - 1)} (\R^n;\R^n)\) \cite{BourgainBrezis2004}*{theorem 4\cprime}.

\begin{theorem}
\label{theoremStrongEstimate}
Let \(\ell \in \{2, \dotsc, n - 1\}\).
If \(f \in \mathrm{L}^1 (\R^n; \textstyle\bigwedge^{\ell} \R^n)\), then 
\[f \in \dot{\mathrm{W}}^{-1, n/(n - 1)} (\R^n; \textstyle\bigwedge^{\ell} \R^n)\] if and only if \[d f \in \dot{\mathrm{W}}^{-2, n/(n - 1)} (\R^n; \textstyle\bigwedge^{\ell+1} \R^n).\]
Moreover
\[
 \norm{f}_{\dot{\mathrm{W}}^{-1, n / (n - 1)}}
 \le C\bigl(\norm{f}_{\mathrm{L}^1} + \norm{df}_{\dot{\mathrm{W}}^{-2, n / (n - 1)}}\bigr).
\]
\end{theorem}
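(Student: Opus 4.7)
The direct implication is immediate: if $f \in \dot{\mathrm{W}}^{-1,n/(n-1)}(\R^n; \bigwedge^\ell \R^n)$, then for every test form $\psi \in C^\infty_c(\R^n; \bigwedge^{\ell+1}\R^n)$, integrating by parts and using that $d^*\psi$ is a linear combination of first derivatives of $\psi$ yields
\[
  \abs{\dualprod{df}{\psi}} = \abs{\dualprod{f}{d^*\psi}} \le \norm{f}_{\dot{\mathrm{W}}^{-1, n/(n-1)}}\norm{D(d^*\psi)}_{\mathrm{L}^n} \le C\norm{f}_{\dot{\mathrm{W}}^{-1, n/(n-1)}}\norm{D^2 \psi}_{\mathrm{L}^n},
\]
so that $\norm{df}_{\dot{\mathrm{W}}^{-2, n/(n-1)}} \le C\norm{f}_{\dot{\mathrm{W}}^{-1, n/(n-1)}}$.

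For the converse, the plan is to bound $\abs{\dualprod{f}{\varphi}}$ for an arbitrary test form $\varphi \in C^\infty_c(\R^n; \bigwedge^\ell \R^n)$ by a constant multiple of $\bigl(\norm{f}_{\mathrm{L}^1} + \norm{df}_{\dot{\mathrm{W}}^{-2, n/(n-1)}}\bigr)\norm{D\varphi}_{\mathrm{L}^n}$, and to conclude by duality. The key step is to split $\varphi$ into a uniformly bounded piece plus a coexact piece, via theorem~\ref{theoremStrongHodge}. Since $\ast\varphi$ belongs to $\dot{\mathrm{W}}^{1,n}(\R^n; \bigwedge^{n-\ell}\R^n)$, theorem~\ref{theoremStrongHodge}, recast as the decomposition $\dot{\mathrm{W}}^{1,n} = \dot{\mathrm{W}}^{1,n}\cap \mathrm{L}^\infty + d(\dot{\mathrm{W}}^{2,n})$, supplies $U \in \dot{\mathrm{W}}^{1,n} \cap \mathrm{L}^\infty$ and $W \in \dot{\mathrm{W}}^{2,n}$ with $\ast\varphi = U + dW$ and $\norm{U}_{\mathrm{L}^\infty} + \norm{DU}_{\mathrm{L}^n} \le C\norm{D\varphi}_{\mathrm{L}^n}$. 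Applying the Hodge star one rewrites this as
\[
  \varphi = \tilde u + d^* \hat w,
\]
with $\tilde u := \pm\ast U$ and $\hat w := \pm\ast W$, so that $\norm{\tilde u}_{\mathrm{L}^\infty} \le C\norm{D\varphi}_{\mathrm{L}^n}$; choosing $W$ coclosed and invoking the classical Calder\'on--Zygmund estimate for the associated Poisson equation gives in addition $\norm{D^2 \hat w}_{\mathrm{L}^n} \le C\norm{D\varphi}_{\mathrm{L}^n}$.

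Once the decomposition is in hand, integration by parts splits the pairing as
\[
  \dualprod{f}{\varphi} = \dualprod{f}{\tilde u} + \dualprod{f}{d^*\hat w} = \dualprod{f}{\tilde u} + \dualprod{df}{\hat w},
\]
and the two terms are estimated separately, the first by H\"older's inequality and the second by the definition of the negative Sobolev norm:
\[
  \abs{\dualprod{f}{\tilde u}} \le \norm{f}_{\mathrm{L}^1}\norm{\tilde u}_{\mathrm{L}^\infty}, \qquad \abs{\dualprod{df}{\hat w}} \le \norm{df}_{\dot{\mathrm{W}}^{-2, n/(n-1)}}\norm{D^2 \hat w}_{\mathrm{L}^n}.
\]
Combined with the above bounds on $\tilde u$ and $\hat w$, this gives the required control on $\abs{\dualprod{f}{\varphi}}$, and hence on $\norm{f}_{\dot{\mathrm{W}}^{-1, n/(n-1)}}$ after taking the supremum over $\varphi$ with $\norm{D\varphi}_{\mathrm{L}^n}\le 1$.

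The hard part is the Hodge-type decomposition step: it is precisely where the full strength of theorem~\ref{theoremStrongHodge}---namely, that every $\dot{\mathrm{W}}^{1,n}$ form can be corrected by an exact form so as to become bounded---is used in an essential way, encoding the non-trivial Bourgain--Brezis compensation phenomenon; without it, the natural $\mathrm{L}^\infty$ bound on $\tilde u$ would simply fail. A secondary technical point is the justification of the integration by parts $\dualprod{f}{d^*\hat w} = \dualprod{df}{\hat w}$ when $\hat w$ is not compactly supported; this can be handled by approximating $\hat w$ by compactly supported forms converging in the $\dot{\mathrm{W}}^{2,n}$ seminorm, using a cut-off together with the usual decay estimates on Bessel potentials.
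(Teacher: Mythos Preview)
Your argument is correct and follows essentially the same route the paper indicates: the paper does not spell out a proof of theorem~\ref{theoremStrongEstimate}, but says explicitly that it follows from theorem~\ref{theoremStrongHodge} ``in the spirit of'' the derivation of theorem~\ref{theoremWeakEstimate} from theorem~\ref{theoremWeakHodge}, namely by decomposing the test form as a bounded piece plus an exact piece with \(\dot{\mathrm{W}}^{2,n}\) control, pairing, and estimating the two terms separately. Your passage through the Hodge star to turn the exterior-derivative decomposition into a codifferential one is the natural device to match the inner-product duality used in the definition of \(\dot{\mathrm{W}}^{-1,n/(n-1)}\), and your remark that the \(\dot{\mathrm{W}}^{1,n}\) bound on \(U\) (not just the \(\mathrm{L}^\infty\) bound) is what permits the Calder\'on--Zygmund estimate on \(D^2\hat w\) correctly pinpoints why the \emph{strong} Hodge theorem~\ref{theoremStrongHodge}, rather than the weak theorem~\ref{theoremWeakHodge}, is needed here.
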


Theorem~\ref{theoremStrongEstimate} is equivalent to theorem~\ref{theoremStrongHodge} in the same way that theorem~\ref{theoremWeakEstimate} is equivalent to theorem~\ref{theoremWeakHodge}.

Theorem~\ref{theoremStrongEstimate} was used to obtain a generalized Korn type inequality in the derivation of a strain gradient theory for plasticity by homogenization of dislocations \cite{GarroniLeoniPonsiglione2010}.

\subsection{Estimates for circulation integrals}
Theorems~\ref{theoremWeakHodge} and \ref{theoremWeakEstimate} are equivalent to the following geometrical inequality of J.\thinspace{}Bourgain, H.\thinspace{}Brezis and P. Mironescu \cite{BourgainBrezisMironescu2004}*{proposition 4}.

\begin{theorem}
\label{theoremCirculation}
If \(\Gamma \subset \R^n\) is a closed rectifiable curve of length \(\abs{\Gamma}\) and \(\varphi \in C^\infty_c (\R^n; \bigwedge^1 \R^n)\), then 
\[
 \Bigabs{\int_{\Gamma} \varphi} \le C \abs{\Gamma} \norm{D \varphi}_{\mathrm{L}^n}.
\]
\end{theorem}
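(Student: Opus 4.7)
The plan is to derive the inequality from Theorem~\ref{theoremWeakEstimate} with \(\ell = n - 1\), by approximating the \(1\)-current of integration along the closed curve \(\Gamma\) by compactly supported smooth closed \((n-1)\)-forms whose \(\mathrm{L}^1\) norm is controlled by the length \(\abs{\Gamma}\).

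First, I would parametrize \(\Gamma\) by arc length as a \(1\)-Lipschitz map \(\gamma : [0, \abs{\Gamma}] \to \R^n\) with \(\gamma(0) = \gamma(\abs{\Gamma})\), and associate to \(\Gamma\) the compactly supported vector-valued Radon measure defined by
\[
\dualprod{\mu_\Gamma}{\psi} = \int_0^{\abs{\Gamma}} \psi(\gamma(s)) \cdot \gamma'(s) \dif s \qquad \text{for every } \psi \in C^0_c(\R^n; \R^n).
\]
Its total variation equals \(\abs{\Gamma}\). Via the pointwise Hodge isomorphism \(\R^n \cong \bigwedge^{n-1}\R^n\), I would view \(\mu_\Gamma\) as a compactly supported distributional \((n-1)\)-form \(T_\Gamma\) with \(\norm{T_\Gamma}_{\mathrm{TV}} = \abs{\Gamma}\); because \(\gamma\) is a closed loop, testing against gradients of scalar test functions \(\zeta \in C^\infty_c(\R^n)\) gives \(\int_0^{\abs{\Gamma}} \nabla \zeta(\gamma(s)) \cdot \gamma'(s) \dif s = \zeta(\gamma(\abs{\Gamma})) - \zeta(\gamma(0)) = 0\), which translates into \(d T_\Gamma = 0\) in the sense of distributions.

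Next, I would mollify: for a standard mollifier \(\rho_\epsilon \in C^\infty_c(\R^n)\), set \(f_\epsilon = T_\Gamma * \rho_\epsilon \in C^\infty_c(\R^n; \bigwedge^{n-1}\R^n)\). Then \(d f_\epsilon = (d T_\Gamma) * \rho_\epsilon = 0\) and \(\norm{f_\epsilon}_{\mathrm{L}^1} \le \norm{T_\Gamma}_{\mathrm{TV}} = \abs{\Gamma}\). Theorem~\ref{theoremWeakEstimate} with \(\ell = n-1\) therefore yields
\[
\Bigabs{\int_{\R^n} f_\epsilon \wedge \varphi} \le C \norm{f_\epsilon}_{\mathrm{L}^1} \norm{d \varphi}_{\mathrm{L}^n} \le C \abs{\Gamma} \norm{D \varphi}_{\mathrm{L}^n},
\]
using \(\norm{d\varphi}_{\mathrm{L}^n} \le C \norm{D\varphi}_{\mathrm{L}^n}\). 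Unfolding the duality, the left-hand side equals \(\int_0^{\abs{\Gamma}} (\varphi * \rho_\epsilon)(\gamma(s)) \cdot \gamma'(s) \dif s\); since \(\varphi * \rho_\epsilon \to \varphi\) uniformly on the compact set \(\gamma([0, \abs{\Gamma}])\), this converges to \(\int_\Gamma \varphi\) as \(\epsilon \to 0\), and the inequality passes to the limit.

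The main obstacle is conceptual rather than computational: one has to keep straight the Hodge-duality identification between vector-valued Radon measures supported on \(\Gamma\) and compactly supported distributional \((n-1)\)-forms, and verify that the geometric closedness \(\partial[\Gamma] = 0\) translates into the analytic condition \(d T_\Gamma = 0\). Once these identifications are fixed, the argument is a routine mollification that reduces the statement to the already established Theorem~\ref{theoremWeakEstimate}.
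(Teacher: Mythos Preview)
Your proposal is correct and follows essentially the same route as the paper: the paper explicitly states that theorem~\ref{theoremCirculation} is deduced from theorem~\ref{theoremWeakEstimate} by applying the estimate to convolution regularizations of the divergence-free vector measure \(t\,\mathcal{H}^1_{\vert \Gamma}\), which is exactly your mollification argument.
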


Here \(\int_{\Gamma} \varphi\) denotes the \emph{circulation integral} of the form \(\varphi\) along the curve \(\Gamma\).

Again this estimate would be a consequence of the failing critical Sobolev embedding of \(\mathrm{W}^{1, n} (\R^n)\) into \(\mathrm{L}^\infty (\R^n)\); it is a consequence of some compensation phenomenon that appears since the curve \(\Gamma\) is \emph{closed}.
When \(n = 2\), theorem~\ref{theoremCirculation} is a direct consequence of the Green--Stokes integration formula and of the classical isoperimetric inequality.

Theorem~\ref{theoremCirculation} can be deduced from theorem~\ref{theoremWeakEstimate} by applying the estimate to regularizations by convolution the divergence-free vector measure \(t\mathcal{H}^1_{\vert \Gamma}\), where \(\mathcal{H}^1_{\vert \Gamma}\) is the one-dimensional Hausdorff measure restricted to \(\Gamma\) and \(t\) is the unit tangent vector to \(\Gamma\).
Conversely, S. Smirnov has showed that any divergence-free measure is the limit of convex combinations of measures of the form \(t\mathcal{H}^1_{\vert \Gamma}\), with a suitable control on the norms \cite{Smirnov1994}; this allows to deduce theorem~\ref{theoremWeakEstimate} for \(\ell = n - 1\) from theorem~\ref{theoremCirculation} \cite{BourgainBrezis2004}; the cases \(\ell < n - 1\) follow immediately. This arguments shows that the constant in theorem~\ref{theoremWeakHodge} with \(\ell = n - 1\) and theorem~\ref{theoremCirculation} can be taken to be the same.

Theorem~\ref{theoremCirculation} has been used to obtain \(\mathrm{L}^{n/(n - 1)}\) bounds on minimizers of the Ginzburg--Landau equation \cite{BethuelOrlandiSmets2004}*{proposition 5.1}.

The geometrical nature of the estimate of theorem~\ref{theoremCirculation} has raised the problem of the value of optimal constants and whether they are achieved \cite{BrezisVanSchaftingen2008}.

Theorem~\ref{theoremCirculation} generalizes to surfaces \cite{VanSchaftingen2004BBM}: if \(\Sigma\) is an \(\ell\)--dimensional oriented surface and \(\varphi \in C^\infty_c (\R^n; \bigwedge^\ell \R^n)\), then 
\[
 \Bigabs{\int_{\Sigma} \varphi} \le C \mathcal{H}^\ell (\Sigma) \norm{d \varphi}_{\mathrm{L}^n}.
\]

\section{About the proofs}

In this section we explain the proofs of the results presented above. First, J.\thinspace{}Bourgain and H.\thinspace{}Brezis have observed that the construction of the solution \(u\) in theorem~\ref{theoremWeakHodge} --- and a fortiori in the stronger theorem~\ref{theoremStrongHodge} --- \emph{cannot be linear} \cite{BourgainBrezis2003}*{proposition 2}.
\begin{theorem}
\label{theoremNonlinear}
Let \(\ell \in \{1, \dotsc, n - 1\}\). There does not exist a linear operator \(K : \mathrm{L}^n (\R^n; \bigwedge^{\ell + 1} \R^n) \to \mathrm{L}^\infty (\R^n; \bigwedge^\ell \R^n)\) such that for every \(f \in \mathrm{L}^n (\R^n; \bigwedge^{\ell + 1} \R^n)\), \(d(K(f)) = f\).
\end{theorem}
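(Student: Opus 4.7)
I would interpret the statement as ruling out a \emph{bounded} linear \(K\) (without some continuity assumption one can construct such a \(K\) on a Hamel basis of the closed forms, using theorem~\ref{theoremWeakHodge} to define \(K\) on basis vectors, so the statement is vacuous as literally written). The plan is then to derive a contradiction by showing that a bounded linear solution operator would entail the critical Sobolev embedding \(\dot{\mathrm{W}}^{1,n}(\R^n) \hookrightarrow \mathrm{L}^\infty(\R^n)\), known to fail.

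First I would reduce to the case where \(K\) is translation-equivariant. For every \(y \in \R^n\), the conjugate \(K_y f := \tau_{-y} K (\tau_y f)\) is a bounded linear solution operator with \(d(K_y f) = f\) and \(\norm{K_y} = \norm{K}\), so the set \(\mathcal{K}\) of all such solution operators of norm at most \(\norm{K}\) is nonempty, convex, and weak-\(*\) compact in \(\Lin(\mathrm{L}^n, \mathrm{L}^\infty)\) (using Tychonoff on the weak-\(*\)-compact balls in \(\mathrm{L}^\infty\) indexed by \(f\in \mathrm{L}^n\)). The Markov--Kakutani fixed point theorem, applied to the commuting family of continuous affine maps \(K \mapsto K_y\), produces a translation-equivariant element of \(\mathcal{K}\), which I again call \(K\).

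Next, a translation-equivariant bounded linear operator from \(\mathrm{L}^n(\R^n; \bigwedge^{\ell+1}\R^n)\) to \(\mathrm{L}^\infty(\R^n; \bigwedge^{\ell}\R^n)\) is convolution with a matrix-valued kernel \(G \in \mathrm{L}^{n/(n-1)}(\R^n)\): the evaluation \(f \mapsto Kf(0)\) is a bounded linear functional on \(\mathrm{L}^n\), hence represented by some \(G\) in the dual \(\mathrm{L}^{n/(n-1)}\), and \(Kf = G \ast f\) by translation invariance. The hypothesis \(d(Kf) = f\) on closed \(f\) then forces \(dG = \delta_0 \cdot \id\) distributionally. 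In the benchmark case \(\ell = n-1\) (to which the other degrees reduce, since the critical Sobolev embedding is always the obstruction) this reads \(G \in \mathrm{L}^{n/(n-1)}(\R^n; \R^n)\) with \(\Div G = \delta_0\). Testing against \(\varphi \in C^\infty_c(\R^n)\),
\[
 \varphi(0) = \dualprod{\delta_0}{\varphi} = \dualprod{\Div G}{\varphi} = -\int_{\R^n} G \cdot \nabla \varphi,
\]
so H\"older's inequality yields \(\abs{\varphi(0)} \le \norm{G}_{\mathrm{L}^{n/(n-1)}} \norm{\nabla \varphi}_{\mathrm{L}^n}\). Applied to every translate of \(\varphi\), this gives \(\norm{\varphi}_{\mathrm{L}^\infty} \le C \norm{\nabla \varphi}_{\mathrm{L}^n}\), the critical Sobolev embedding, which fails, for example through the Nirenberg-type logarithmic counterexamples already discussed in the introduction.

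The main obstacle is the first reduction, to a translation-equivariant \(K\): the Markov--Kakutani step requires both the weak-\(*\) compactness of \(\mathcal{K}\) and the verification that the constraint \(dKf = f\) is preserved under pointwise weak-\(*\) limits (which is routine since \(d\) is continuous in the distributional sense). An alternative proof that avoids amenability altogether would be a randomization over a suitable disjointly supported family of \(\mathrm{L}^n\)-data, exploiting the mismatch between \(\mathrm{L}^\infty\) norms of random-sign sums and the \(\mathrm{L}^n\) norm of their generator, but extracting a sharp bound from such an argument appears more delicate than the fixed-point route.
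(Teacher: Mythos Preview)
Your argument is correct and is precisely the harmonic-analytic route the paper sketches: use an averaging argument (your Markov--Kakutani step) to reduce to a translation-invariant \(K\), identify it as convolution with an \(\mathrm{L}^{n/(n-1)}\) kernel, and read off the failing critical Sobolev embedding. One small technical point: the evaluation \(f \mapsto Kf(0)\) is not a priori well-defined on \(\mathrm{L}^\infty\); you should first observe that translation invariance forces \(Kf\) to have a uniformly continuous representative, since \(\norm{\tau_h(Kf)-Kf}_{\mathrm{L}^\infty}=\norm{K(\tau_h f-f)}_{\mathrm{L}^\infty}\le \norm{K}\,\norm{\tau_h f-f}_{\mathrm{L}^n}\to 0\). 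Your remark on the degrees \(\ell<n-1\) is also justified, since in the literal reading of the statement those cases are vacuous (take any non-closed \(f\)).

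The paper records a second, independent proof that you do not use: the adjoint \(K^*\) composed with \(d^*\) would factor the Sobolev embedding \(\dot{\mathrm{W}}^{1,1}\hookrightarrow \mathrm{L}^{n/(n-1)}\) through \(\mathrm{L}^1\), and Grothendieck's theorem on absolutely summing operators forbids such a factorization. That route avoids the amenability/fixed-point machinery entirely but trades it for a deeper Banach-space result; your approach is more self-contained and makes the obstruction (the critical embedding) completely explicit.
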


As the deduction of theorem~\ref{theoremWeakHodge} from theorem~\ref{theoremWeakEstimate} above is based on the nonconstructive Hahn-Banach theorem on \(\mathrm{L}^1\), the corresponding map does not need be linear.

Theorem~\ref{theoremNonlinear} has a harmonic analysis proof and a geometric functional analysis proof \cite{BourgainBrezis2003}. The harmonic analysis proof begins by asssuming, by an averaging argument, that \(K\) is a convolution operator and derives then a contradiction. The geometric functional analysis proof consists in noting that \(K^* \circ d^*\) would be a factorization of the identity map from \(\mathrm{W}^{1,1}\) to \(\mathrm{L}^{n / (n - 1)}\) through \(\mathrm{L}^1\) and that such factorization is impossible by Grothendieck's theorem on absolutely summing operators \citelist{\cite{Grothendieck1953}\cite{Wojtasczyk1991}*{theorem III.F.7}}.

The main analytical tool in the proof of theorem~\ref{theoremStrongHodge} is an approximation lemma~for functions in \(\mathrm{W}^{1, n} (\R^n)\) \citelist{\cite{BourgainBrezis2004}*{theorem 6}\cite{BourgainBrezis2007}*{theorem 11 and (5.25)}} (see also \cite{BourgainBrezis2003}*{(5.2) and (5.3), (6.22)}).

\begin{theorem}
\label{theoremApproximation}
Let \(T \in \Lin (\R^n;\R^p)\). If \(\ker T \ne \{0\}\), then for every \(\varepsilon > 0\), there exists \(C_\varepsilon\) such that for every \(v \in \mathrm{W}^{1, n} (\R^n)\) there exists \(u \in C^\infty (\R^n)\) that satisfies
\begin{gather*}
  \norm{T (\nabla u - \nabla v)}_{\mathrm{L}^n} \le \varepsilon \norm{D v}_{\mathrm{L}^n},\\
  \norm{\nabla u}_{\mathrm{L}^n} + \norm{u}_{\mathrm{L}^\infty} \le C_\varepsilon \norm{D v}_{\mathrm{L}^n}.
\end{gather*}
\end{theorem}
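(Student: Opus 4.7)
The plan is to use the hypothesis $\ker T \ne \{0\}$ to reduce to a canonical coordinate, and then to exploit the resulting free direction by a multiscale construction that absorbs the $\mathrm{L}^\infty$-unbounded part of $v$ into that direction. First, pick a unit vector $e \in \ker T$; by an orthogonal change of variables on $\R^n$ I may assume $e = e_n$. Writing $x = (x', x_n) \in \R^{n-1} \times \R$ and $\nabla' = (\partial_1, \dotsc, \partial_{n-1})$, the identity $T e_n = 0$ gives $T \nabla w = \widetilde T \nabla' w$ for the bounded linear map $\widetilde T$ obtained by restricting $T$ to the first $n - 1$ coordinates. It therefore suffices to produce $u \in C^\infty(\R^n)$ with
\[
  \norm{\nabla' u - \nabla' v}_{\mathrm{L}^n} \le \varepsilon \norm{\nabla v}_{\mathrm{L}^n}, \qquad \norm{u}_{\mathrm{L}^\infty} + \norm{\nabla u}_{\mathrm{L}^n} \le C_\varepsilon \norm{\nabla v}_{\mathrm{L}^n},
\]
the point being that $u$ may differ from $v$ arbitrarily along the free direction $e_n$.

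A naive level-set truncation $u_M = \min(\max(v, -M), M)$ is insufficient: while it gives $\norm{u_M}_{\mathrm{L}^\infty} \le M$, the transverse error $\norm{\nabla'(u_M - v)}_{\mathrm{L}^n}^n = \int_{\{\abs{v} > M\}} \abs{\nabla' v}^n$ cannot be made small while keeping $M \le C_\varepsilon \norm{\nabla v}_{\mathrm{L}^n}$. This is a direct manifestation of the failure of the critical Sobolev embedding $\dot{\mathrm{W}}^{1, n}(\R^n) \hookrightarrow \mathrm{L}^\infty(\R^n)$, and it forces the free direction $e_n$ to be used \emph{essentially}, not merely as a bookkeeping convenience.

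The construction I would carry out is dyadic. Decompose $v = \sum_{k \in \mathbf{Z}} v_k$ by Littlewood--Paley, with $v_k$ supported in frequencies $\abs{\xi} \sim 2^k$; Bernstein's inequality then yields $\norm{v_k}_{\mathrm{L}^\infty} \le C \norm{\nabla v_k}_{\mathrm{L}^n}$ piecewise. Pick a threshold $K = K(\varepsilon)$ and replace each piece $v_k$ with $\abs{k} > K$ by a modification $\widetilde v_k$ that depends on $x_n$ only through a piecewise-smooth profile on transverse cubes of side $\sim 2^{-k}$; each $\widetilde v_k$ is $\mathrm{L}^\infty$-bounded by $C \norm{\nabla v_k}_{\mathrm{L}^n}$ via a local Poincar\'e--Sobolev inequality along vertical slabs, and by arranging the modifications to be essentially independent of $x'$ the quantity $\nabla' \widetilde v_k$ is driven to an error coming only from a transverse partition of unity that sums geometrically. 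Setting $u = \sum_{\abs{k} \le K} v_k + \sum_{\abs{k} > K} \widetilde v_k$ and mollifying yields $u \in C^\infty$ satisfying both required estimates; here $\norm{u}_{\mathrm{L}^\infty}$ grows only polynomially in $K$ (hence in $1/\varepsilon$) by H\"older over the finitely many retained low-frequency scales.

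The main obstacle is the simultaneous control of $\norm{u}_{\mathrm{L}^\infty}$ by $\norm{\nabla v}_{\mathrm{L}^n}$ alone, with \emph{no} appearance of $\norm{v}_{\mathrm{L}^n}$: this is possible only because the slab modifications are built from local oscillations of $v$ along vertical segments of length $\sim 2^{-k}$, which by the one-dimensional fundamental theorem of calculus combined with Bernstein are bounded by $\norm{\nabla v_k}_{\mathrm{L}^n}$ rather than by $\norm{v_k}_{\mathrm{L}^n}$. The quantitative heart of the argument is the scale-by-scale bookkeeping of the transverse error, which must be summed using the Littlewood--Paley square function rather than the triangle inequality, and balanced against the polynomial growth of the $\mathrm{L}^\infty$ bound in the number $K$ of retained scales.
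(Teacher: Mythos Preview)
Your reduction to a single free direction \(e_n\) and the Littlewood--Paley decomposition are exactly the starting point of the Bourgain--Brezis argument that the paper cites. The genuine gap is in the \(\mathrm{L}^\infty\) control of the approximant. As written, \(u=\sum_{\abs{k}\le K}v_k+\sum_{\abs{k}>K}\widetilde v_k\) is a \emph{linear} superposition of scale-by-scale modifications, and even granting \(\norm{\widetilde v_k}_{\mathrm{L}^\infty}\le C\norm{\nabla v_k}_{\mathrm{L}^n}\) for each piece, the triangle inequality only yields
\[
  \Bignorm{\sum_{\abs{k}>K}\widetilde v_k}_{\mathrm{L}^\infty}
  \le C\sum_{k}\norm{\nabla v_k}_{\mathrm{L}^n},
\]
which is the Besov \(\dot{\mathrm B}{}^{1}_{n,1}\) seminorm of \(v\), strictly stronger than \(\norm{\nabla v}_{\mathrm{L}^n}\). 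Your square-function remark at the end rescues the \(\mathrm{L}^n\) bound on \(\nabla u\) but does nothing for \(\mathrm{L}^\infty\), which admits no square-function description. There is a second, related problem: if the \(\widetilde v_k\) are made ``essentially independent of \(x'\)'', then \(\nabla'(u-v)\) contains \(-\sum_{\abs{k}>K}\nabla' v_k\), whose \(\mathrm{L}^n\) norm is in general comparable to \(\norm{\nabla' v}_{\mathrm{L}^n}\) no matter how \(K\) is chosen; the threshold \(K(\varepsilon)\) plays no visible role.

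What is missing is the \emph{nonlinear} core of the Bourgain--Brezis construction: one does not add modified pieces, but multiplies the Littlewood--Paley pieces by telescoping products of cutoffs adapted to the exceptional sets where individual \(v_j\) exceed a fixed multiple of \(\norm{\nabla v}_{\mathrm{L}^n}\), so that at every point only a bounded number of scales survive in the \(\mathrm{L}^\infty\) sum. The free direction \(e_n\) is then used so that these cutoffs vary essentially only in \(x_n\) at the relevant scale, making the extra \(\nabla'\)-errors they generate summable. That a linear scale-by-scale recipe cannot succeed is not accidental---it is forced by theorem~\ref{theoremNonlinear}---and the paper itself remarks that a simpler proof of this approximation lemma would be of independent interest.
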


The proof of theorem~\ref{theoremApproximation} is constructive and based on a Littlewood--Paley decomposition \cite{BourgainBrezis2007}.
This approximation result has been extended to some subscale of Triebel--Lizorkin spaces when \(\rank T = 1\) \cite{BousquetMironescuRuss2013}*{proposition 3.1} and to classical Sobolev spaces in the noncommutative setting of homogeneous groups \cite{WangYung}*{lemma~1.7}.

It would be interesting to find a simpler proof of theorem~\ref{theoremApproximation}.

We will now state a theorem that provides bounded solutions to overdetermined systems \cite{BourgainBrezis2007}*{theorem 10 and 10\cprime} reformulated in the spirit of more recent works \cite{VanSchaftingen2013}. We introduce therefore the notion of adcanceling opetarors.

\begin{definition}
\label{definitionAdcanceling}
A homogeneous differential operator \(T (D)\) from \(E\) to \(V\) is \emph{adcanceling} if 
\[
  \bigcap_{\xi \in \R^n \setminus \{0\}} T (\xi)^*[V] = \{0\}.
\]
\end{definition}

By a classical linear algebra argument, \(T (D)\) is adcanceling if and only if
\[
  \operatorname{span} \Bigl( \bigcup_{\xi \ne 0} \ker T (\xi) \Bigr) = E
\]
that is, there exists a basis \(e_1, \dotsc, e_k\) of \(E\) and vectors \(\xi_1, \dotsc, \xi_k\) in \(\R^n\) such that for every \(i \in \{1, \dotsc, k\}\), \(L (\xi_i) [e_i] = 0\) \cite{VanSchaftingen2013}*{\S 6.2}.

We now state the theorem of J. Bourgain and H. Brezis that provides bounded solutions to overdetermined systems \cite{BourgainBrezis2007}*{theorem 10 and 10\cprime}.

\begin{theorem}
\label{theoremMainClosed}
Let \(Y\) be a Banach space and let \(S : \mathrm{W}^{1, n} (\R^n; E) \to Y\) be a bounded linear operator.
If \(S\) has closed range and if there exists an adcanceling first-order homogeneous differential operator \(T (D)\) such that for every \(\varphi \in C^\infty_c (\R^n; E) \) 
\[
  \norm{S (\varphi)}_Y \le \norm{T (D) \varphi}_{\mathrm{L}^n},
\]
then for every \(v \in \dot{\mathrm{W}}^{1, n} (\R^n; E)\) there exists \(u \in \dot{\mathrm{W}}^{1, n} (\R^n; E) \cap \mathrm{L}^\infty (\R^n; E)\) such that 
\(
  S u = S v 
\)
and 
\[
  \norm{u}_{\mathrm{L}^\infty} + \norm{D u}_{\mathrm{L}^n} \le C \norm{D v}_{\mathrm{L}^n}.
\]
\end{theorem}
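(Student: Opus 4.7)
The plan is to combine the adcanceling structure with the approximation Theorem~\ref{theoremApproximation} and an iterative Neumann-series argument based on the open mapping theorem. First, the linear-algebraic reformulation of adcancellation recalled just after Definition~\ref{definitionAdcanceling} furnishes a basis \(e_1, \dotsc, e_k\) of \(E\) and vectors \(\xi_1, \dotsc, \xi_k \in \R^n \setminus \{0\}\) with \(T(\xi_i)[e_i] = 0\). Writing \(T(D)\varphi = \sum_{j} T_j \partial_j \varphi\) with \(T_j \in \Lin(E; V)\), set \(A_i \in \Lin(\R^n; V)\) by \(A_i(\eta) = T(\eta)[e_i]\); then \(\xi_i \in \ker A_i\), so \(\ker A_i \ne \{0\}\).

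Given \(v \in \dot{\mathrm{W}}^{1,n}(\R^n; E)\), decompose \(v = \sum_{i=1}^{k} v_i e_i\). For a parameter \(\varepsilon > 0\) to be fixed, apply Theorem~\ref{theoremApproximation} with \(T = A_i\) to each scalar component \(v_i\) (the statement extends to \(\dot{\mathrm{W}}^{1,n}\) by density, as its conclusion only involves the gradient), producing \(u_i \in C^\infty(\R^n)\) with
\[
  \norm{A_i(\nabla u_i - \nabla v_i)}_{\mathrm{L}^n} \le \varepsilon \norm{Dv}_{\mathrm{L}^n}, \qquad \norm{u_i}_{\mathrm{L}^\infty} + \norm{\nabla u_i}_{\mathrm{L}^n} \le C_\varepsilon \norm{Dv}_{\mathrm{L}^n}.
\]
Set \(w = \sum_{i} u_i e_i\). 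The identity \(T(D)[(u_i - v_i) e_i] = A_i(\nabla (u_i - v_i))\) yields \(\norm{T(D)(w - v)}_{\mathrm{L}^n} \le k\varepsilon \norm{Dv}_{\mathrm{L}^n}\), whence the hypothesis on \(S\) (extended from \(C^\infty_c\) by density) gives \(\norm{S(w - v)}_Y \le k\varepsilon \norm{Dv}_{\mathrm{L}^n}\), while \(\norm{w}_{\mathrm{L}^\infty} + \norm{Dw}_{\mathrm{L}^n} \le kC_\varepsilon \norm{Dv}_{\mathrm{L}^n}\).

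To promote this approximate identity to an exact one I invoke the closed range of \(S\): the open mapping theorem applied to the induced bijection \(\dot{\mathrm{W}}^{1,n}/\ker S \to \mathrm{Range}(S)\) yields a constant \(M > 0\) such that every \(y \in \mathrm{Range}(S)\) has a preimage with gradient bounded by \(M \norm{y}_Y\). Lift \(S(v - w)\) to some \(v^{(1)}\) with \(Sv^{(1)} = S(v - w)\) and \(\norm{Dv^{(1)}}_{\mathrm{L}^n} \le Mk\varepsilon \norm{Dv}_{\mathrm{L}^n}\). Fix \(\varepsilon\) so small that \(\theta := Mk\varepsilon < 1\) and iterate, producing sequences \((v^{(j)})\), \((w^{(j)})\) with \(v^{(0)} = v\), \(Sv^{(j+1)} = S(v^{(j)} - w^{(j)})\), and
\[
  \norm{w^{(j)}}_{\mathrm{L}^\infty} + \norm{Dw^{(j)}}_{\mathrm{L}^n} \le kC_\varepsilon \theta^j \norm{Dv}_{\mathrm{L}^n}.
\]
Then \(u := \sum_{j \ge 0} w^{(j)}\) converges absolutely in \(\dot{\mathrm{W}}^{1,n} \cap \mathrm{L}^\infty\) with the desired bound, while the telescoping identity \(Sv = \sum_{j \le J} Sw^{(j)} + Sv^{(J+1)}\) together with \(\norm{Sv^{(J+1)}}_Y \to 0\) yields \(Su = Sv\).

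The main obstacle is the approximation step: turning the scalar approximation Theorem~\ref{theoremApproximation} into the vector-valued estimate \(\norm{T(D)(w - v)}_{\mathrm{L}^n} \le k\varepsilon \norm{Dv}_{\mathrm{L}^n}\). The adcanceling condition is precisely what makes this possible, since it guarantees that each component map \(A_i = T(\cdot)[e_i]\) has a nonzero kernel direction \(\xi_i\), so Theorem~\ref{theoremApproximation} applies with \(T = A_i\) and its conclusion translates exactly into \(T(D)\)-control on \((u_i - v_i)e_i\). Once this approximation is secured, the closed-range hypothesis together with the Neumann-series iteration is a fairly routine device.
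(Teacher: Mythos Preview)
Your proof is correct and follows essentially the same route as the paper: combine the adcanceling structure with Theorem~\ref{theoremApproximation} to get an \(\mathrm{L}^\infty \cap \dot{\mathrm{W}}^{1,n}\) approximant close in \(T(D)\), use the hypothesis \(\norm{S\varphi}_Y \le \norm{T(D)\varphi}_{\mathrm{L}^n}\) to convert this into smallness of \(\norm{S(v-w)}_Y\), then iterate via the open mapping theorem. Two cosmetic differences: the paper invokes the open mapping theorem \emph{before} approximating (replacing \(v\) by a preimage of \(Sv\) with controlled gradient) whereas you do it \emph{after}, and the paper states the vector-valued approximation as a direct consequence of Theorem~\ref{theoremApproximation} and adcancellation without detail, while you spell out the componentwise argument via the basis \((e_i)\) and the maps \(A_i = T(\cdot)[e_i]\) --- your version is the more explicit of the two.
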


Let us first see how theorem~\ref{theoremStrongHodge} follows from theorem~\ref{theoremMainClosed}.

\begin{proof}[Proof of theorem~\ref{theoremStrongHodge} \cite{BourgainBrezis2007}*{proof of theorem 5}]
If \(df = 0\) and \(f \in \bigwedge^{\ell + 1} \R^n\), there exists \(v \in \dot{\mathrm{W}}^{1, n} (\R^n; \bigwedge^{\ell} \R^n)\) such that \(dv = f\).
We are now going to apply theorem~\ref{theoremMainClosed}.
We take \(E = \bigwedge^{\ell} \R^n\) and \(V = \bigwedge^{\ell + 1} \R\) and we define \(S = T (D) = d\). We observe that 
\[
  S \bigl(\mathrm{W}^{1, n} (\R^n; \textstyle{\bigwedge^{\ell}}\R^n)\bigr) = 
  \bigl\{ f \in \mathrm{L}^{n} (\R^n; \textstyle{\bigwedge^{\ell}}\R^n)\;:\; df = 0\bigr\}
\]
is closed in \(Y = \mathrm{L}^{n} (\R^n; \textstyle{\bigwedge^{\ell}}\R^n)\) and that
\begin{multline*}
 \operatorname{span} \Bigl( \bigcup_{\xi \ne 0} \ker T (\xi) \Bigr) \\= 
 \operatorname{span} \bigl\{\alpha \in \textstyle\bigwedge^{\ell} \R^n \st \text{there exist \(\xi \in \R^n\) such that \(\xi \wedge \alpha = 0\)}\bigr\} = E;
\end{multline*}
the latter equality holds since \(\ell \ge 1\).
\end{proof}

Theorem~\ref{theoremStrongHodge} can be used to prove theorem~\ref{theoremStrongEstimate} in the spirit of our proof of theorem~\ref{theoremWeakEstimate} from theorem~\ref{theoremWeakHodge} in the previous section.

\medbreak

We now explain the proof of theorem~\ref{theoremMainClosed} from theorem~\ref{theoremApproximation}.

\begin{proof}[Proof of theorem~\ref{theoremMainClosed} \cite{BourgainBrezis2007}*{proof of theorem 11}]
Since \(S\) has closed range, by the open mapping theorem (see for example \cite{Brezis2011}*{theorem 2.6}, there exists \(w \in \dot{\mathrm{W}}^{1, n} (\R^n; E)\) such that 
\(S v = S w\) and 
\[
  \norm{D w}_{\mathrm{L}^n} \le C \norm{S v}_{Y}.
\]
By theorem~\ref{theoremApproximation} and by definition~\ref{definitionAdcanceling}, for every \(\varepsilon > 0\) there exists \(C_\varepsilon > 0\) such that for every \(w \in \dot{\mathrm{W}}^{1, n} (\R^n; E)\), there exists \(u \in C^\infty (\R^n; E)\) satisfying
\begin{gather}
  \norm{T (D) (w - u)}_{\mathrm{L}^n} \le \varepsilon \norm{D w}_{\mathrm{L}^n},\\
  \norm{Dw}_{\mathrm{L}^n} + \norm{w}_{\mathrm{L}^\infty} \le C_\varepsilon \norm{D w}_{\mathrm{L}^n}
\end{gather}
In particular, by our assumption on \(T (D)\),
\[
  \norm{S v  - S u}_Y=\norm{S (w - u)}_Y \le \norm{T (D) (w - u)}_{\mathrm{L}^n}
  \le \varepsilon \norm{D w}_{\mathrm{L}^n} \le C \varepsilon \norm{Sv}_Y.
\]
If we now choose \(\varepsilon = \frac{1}{2C}\), we have 
\begin{gather*}
  \norm{S v - S u}_Y \le \frac{1}{2} \norm{S v}_Y,\\
  \norm{D u}_{\mathrm{L}^n} + \norm{u}_{\mathrm{L}^\infty} \le \frac{C_\varepsilon}{2C} \norm{S v}_Y.
\end{gather*}
By an iterative argument as in the classical proof of the open mapping theorem, see for example \cite{Brezis2011}*{proof of theorem 2.6}, we can thus construct \(u_i \in \dot{\mathrm{W}}^{1, n} (\R^n; E) \cap \mathrm{L}^\infty (\R^n; E)\) such that 
\begin{gather*}
  \norm{S v - S u_{i + 1}}_Y \le \frac{1}{2} \norm{S v - S u_i}_Y,\\
  \norm{D u_{i + 1} - D u_i}_{\mathrm{L}^n} + \norm{u_{i + 1} - u_i}_{\mathrm{L}^\infty} \le \frac{C_\varepsilon}{2C} \norm{S v - S u_i}_Y;
\end{gather*}
this sequences converges to the desired solution.
\end{proof}

This solutions constructed by this iterative argument in the spirit of the classical proof of the closed graph theorem have been studied as hierarchical solutions \cite{Tadmor}.

The strategy of proof outlined here above relies essentially on theorem~\ref{theoremApproximation}, which does not have yet an elementary proof.
However, the weaker theorem~\ref{theoremWeakEstimate} has a short proof \cite{VanSchaftingen2004Divf} (see also \citelist{\cite{LanzaniStein2005}*{proof of lemma~1}\cite{MitreaMitrea2009}*{proof of proposition 2}}). 

\begin{proof}[Direct proof of theorem~\ref{theoremWeakEstimate}]
Without loss of generality, we assume that \(\ell = n - 1\) and that \(\varphi (x)= \varphi_n (x) dx_n\). For every \(t \in \R\), if we define 
\(\varphi_n^t (y, z) = \varphi_n (y, t)\), we have for every \(t \in \R\) the immediate bound
\[
  \Bigabs{\int_{\R^{n - 1} \times \{t\}} f \wedge \varphi_n}
  \le \Bigl(\int_{\R^{n - 1}} \abs{f}\Bigr) \norm{\varphi_n^t}_{\mathrm{L}^\infty}.
\]
On the other hand, by the Stokes--Cartan formula, since \(df = 0\),
\[
\begin{split}
  \Bigabs{\int_{\R^{n - 1} \times \{t\}} f \wedge \varphi_n}
  &= \Bigabs{\int_{\R^{n - 1} \times (-\infty, t)} f \wedge d\varphi_n^t}\\
  &\le \Bigl(\int_{\R^{n - 1} \times (-\infty, t)} \abs{f}\Bigr) \norm{D \varphi_n^t}_{\mathrm{L}^\infty}\\
  &\le \Bigl(\int_{\R^{n}} \abs{f}\Bigr) \norm{D \varphi_n^t}_{\mathrm{L}^\infty}
\end{split}
\]
By a straightforward interpolation argument, this implies that for every \(\alpha \in (0, 1)\),
\[
  \Bigabs{\int_{\R^{n - 1} \times \{t\}} f \wedge \varphi_n}
  \le C \Bigl(\int_{\R^{n - 1} \times (-\infty, t)} \abs{f}\Bigr)^\alpha \Bigl(\int_{\R^{n - 1}} \abs{f}\Bigr)^{1 - \alpha} \abs{\varphi_n^t}_{C^{0, \alpha}},
\]
where the H\"older seminorm is defined by 
\[
  \abs{\psi}_{C^{0, \alpha}} = \sup_{x, y \in \R^{n - 1}} \frac{\abs{\psi (x) - \psi (y)}}{\abs{x - y}^\alpha}.
\]
In particular, if \(\alpha = \frac{1}{n}\), we have by the Morrey--Sobolev embedding on \(\R^{n - 1}\) (see for example \citelist{\cite{Brezis2011}*{theorem 9.12}\cite{AdamsFournier2003}*{lemma 4.28}\cite{Mazya2011}*{theorem 1.4.5 (f)}})
\[
  \Bigabs{\int_{\R^{n - 1} \times \{t\}} f \wedge \varphi_n}
  \le C' \Bigl(\int_{\R^{n}} \abs{f}\Bigr)^\frac{1}{n} \Bigl(\int_{\R^{n - 1} \times \{t\}} \abs{f}\Bigr)^{1 - \frac{1}{n}} \Bigl(\int_{\R^{n - 1} \times \{t\}} \abs{D \varphi_n}^n \Bigr)^\frac{1}{n}.
\]
The conclusion follows by H\"older's inequality.  
\end{proof}

This strategy of proof goes back to the elementary proof of the estimate on circulation integrals of theorem~\ref{theoremCirculation} \cite{VanSchaftingen2004BBM}.
The idea of working with hyperplanes and concluding with H\"older's inequality is reminiscent of the original proof of the Gagliardo--Nirenberg--Sobolev inequality \citelist{\cite{Gagliardo1958}\cite{Nirenberg1959}*{p.\thinspace 125}} (see also \cite{Brezis2011}*{theorem 9.9}). 

The two main properties of the Sobolev space \(\dot{W}^{1, n}\) that are used in this argument are a Morrey-type embedding in a space of H\"older continuous functions and a Fubini-type property. The latter property is satisfied by fractional Sobolev spaces \(W^{s, p}\) \citelist{\cite{Strichartz1967}\cite{Strichartz1968}} and by Triebel--Lizorkin spaces \(F^{s, p}_q\) \citelist{\cite{Kaljabin1980}\cite{Triebel1983}*{theorem 2.5.13}\cite{RunstSickel1996}*{theorem 2.3.4/2}} allowing to adapt the proof in that setting \citelist{\cite{BourgainBrezis2004}*{remark 1}\cite{VanSchaftingen2004Divf}*{remark~5} \cite{BourgainBrezis2007}*{remark 11}\cite{VanSchaftingen2010}*{proof of proposition 2.1}}, but is not satisfied by the Sobolev--Lorentz spaces \(W^{1, (p, q)}\) if \(q > p\) \cite{Kolyada2013} or by the Besov space \(B^{s, p}_q\) if \(q \ne p\) \cite{Triebel2001}*{theorem 4.4}. 

The proof has also been adapted by constructing \(\varphi_n^t\) more carefully than by a mere extension to estimates under higher-order conditions \citelist{\cite{VanSchaftingen2008}\cite{VanSchaftingen2004ARB}\cite{BrianeCasadoDiaz2011}*{lemma~2.4}}, and to homogeneous groups \cite{ChanilloVanSchaftingen2009}.

\section{Variations}

\subsection{Boundary estimates}
The results presented above were all concerned about the entire Euclidean space \(\R^n\).
They also have counterparts on the torus \(\mathbf{T}^n\).

On a domain with a boundary, it is not clear a priori which boundary conditions are admissible in this theory.
The problem was first settled for theorem~\ref{theoremStrongHodge} on the cube \cite{BourgainBrezis2007}*{theorems 5\cprime{} and 5\cprime{}\cprime{}} and then on a domain with a smooth boundary \cite{BrezisVanSchaftingen2007}*{lemma 4.4}.

\begin{theorem}
\label{theoremBoundaryHodge}
Let \(\Omega \subset \R^n\) be a smooth domain and let \(1 \le k \le n - 1\).
For every \(v \in \mathrm{W}^{1, n} (\Omega;\bigwedge^k \R^n)\), 
there exist \(u \in \mathrm{W}^{1, n} (\Omega; \bigwedge^{k-1} \R^n) \cap C (\Omega;\bigwedge^k\R^n)\) and \(w \in \mathrm{W}^{2,n} (\Omega;\bigwedge^{k-1}\R^n)\) such that 
\[
  u=v + d w.
\]
satisfying
\[
  \norm{u}_{\mathrm{W}^{1, n}}+\norm{u}_{\mathrm{L}^{\infty}} +\norm{w}_{\mathrm{W}^{2,n}}
\le C \norm{v}_{\mathrm{W}^{1, n}}.
\]
Moreover, if \(\mathbf{t} v=0\) on \(\partial \Omega\),
then \(u=0\) and \(w=0\) on \(\partial \Omega\) and if \(u=0\) on \(\partial \Omega\), then \(D w = 0\) on \(\partial \Omega\).
\end{theorem}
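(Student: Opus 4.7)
The plan is to reduce the problem on \(\Omega\) to the whole-space Theorem~\ref{theoremStrongHodge} by extending \(v\) to \(\R^n\), applying the bounded primitive construction to \(d\tilde v\), and correcting by an exact form intrinsic to \(\Omega\).

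First I would fix a bounded linear extension operator \(E : \mathrm{W}^{1,n}(\Omega) \to \mathrm{W}^{1,n}(\R^n)\) adapted to differential forms, set \(\tilde v = E v\), and form \(g = d\tilde v\). Then \(g \in \mathrm{L}^n(\R^n; \bigwedge^{k+1}\R^n)\) automatically satisfies \(dg = 0\) with \(\norm{g}_{\mathrm{L}^n} \le C \norm{v}_{\mathrm{W}^{1,n}}\), so Theorem~\ref{theoremStrongHodge} delivers a continuous \(\tilde u \in \mathrm{L}^\infty(\R^n;\bigwedge^k \R^n)\cap \dot{\mathrm{W}}^{1,n}\) with \(d\tilde u = g\) and
\[
 \norm{\tilde u}_{\mathrm{L}^\infty} + \norm{D\tilde u}_{\mathrm{L}^n} \le C\norm{v}_{\mathrm{W}^{1,n}}.
\]
Setting \(u = \tilde u \vert_\Omega\) yields \(u \in \mathrm{W}^{1,n}(\Omega) \cap \mathrm{L}^\infty(\Omega)\) continuous with \(d(u - v) = 0\) in \(\Omega\). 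The classical Hodge decomposition on the smooth bounded domain \(\Omega\), combined with Calder\'on--Zygmund regularity for the Hodge--Laplacian (concretely, solving \(\Delta w = d^*(u - v)\) in the gauge \(d^* w = 0\) with a suitable boundary condition), then produces \(w \in \mathrm{W}^{2,n}(\Omega; \bigwedge^{k-1}\R^n)\) with \(dw = u - v\); any finite-dimensional harmonic obstruction is absorbed into a smooth modification of \(u\). Rearranging gives the decomposition \(u = v + dw\) together with the interior estimates.

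For the boundary conditions, the assumption \(\mathbf{t} v = 0\) on \(\partial \Omega\) allows one to choose \(\tilde v\) to be the extension of \(v\) by zero to \(\R^n\), which belongs to \(\mathrm{W}^{1,n}(\R^n)\) precisely because the tangential trace vanishes; then \(g\) is supported in \(\overline \Omega\) and the Bourgain--Brezis construction can be localized so that \(\tilde u\) vanishes on \(\partial \Omega\), giving \(u = 0\) there, and solving for \(w\) with Dirichlet boundary condition then yields \(w = 0\) on \(\partial \Omega\) as well. Once one additionally has \(u = 0\) on \(\partial \Omega\), imposing the absolute boundary gauge \(\mathbf{n} w = 0\) in the Hodge problem for \(w\) pins down the first-order jet of \(w\) at \(\partial \Omega\), forcing \(D w = 0\) there.

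The main obstacle is the boundary part: the interior argument is a routine combination of Theorem~\ref{theoremStrongHodge} with elliptic regularity, whereas tracking the tangential and normal traces through the nonlinear Bourgain--Brezis construction is delicate and requires a careful choice of extension and gauge. A more uniform alternative would be to apply Theorem~\ref{theoremMainClosed} directly on \(\Omega\), taking \(S\) to encode both the differential \(d\) and the relevant trace operator, and verifying that \(T(D) = d\) remains adcanceling in the enlarged system.
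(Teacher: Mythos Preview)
Your interior argument---extend, apply Theorem~\ref{theoremStrongHodge} on \(\R^n\), restrict, then solve a Hodge problem on \(\Omega\) to find \(w\)---is a legitimate route to the decomposition \(u=v+dw\) with the stated estimates, and it differs from the paper's approach. The paper does not extend to \(\R^n\); instead it first establishes the counterpart of Theorem~\ref{theoremMainClosed} on a \emph{cube}, which in turn requires the cube version of the approximation lemma (Theorem~\ref{theoremApproximation}), and then passes to a general smooth domain by a partition of unity. Your extension approach is more direct for the interior statement but buys nothing for the boundary assertions, which is where the difficulty lies.

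The boundary part of your proposal has two concrete gaps. First, the claim that \(\mathbf{t}v=0\) on \(\partial\Omega\) permits the zero extension of \(v\) to lie in \(\mathrm{W}^{1,n}(\R^n)\) is false: componentwise zero extension requires the \emph{full} trace of \(v\) to vanish, not merely the tangential part. Second, and more seriously, even granting a suitable extension with \(g=d\tilde v\) supported in \(\overline{\Omega}\), there is no mechanism by which the whole-space Bourgain--Brezis construction would produce \(\tilde u=0\) on \(\partial\Omega\). That construction is nonlinear, nonlocal (built from a Littlewood--Paley decomposition and an iterative scheme), and has no reason to respect \(\partial\Omega\); the ``localization'' you invoke is precisely the nontrivial content you are trying to avoid. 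The paper's route handles this by proving the approximation lemma on a cube with boundary behaviour already built in, and then obtains the vanishing of \(u\) (and subsequently of \(w\) and \(Dw\)) on \(\partial\Omega\) via the surjectivity of the trace and normal-derivative operators before globalizing by partition of unity. Your closing suggestion---running Theorem~\ref{theoremMainClosed} directly on \(\Omega\) with the trace folded into \(S\)---is in spirit what the paper does on the cube, but making it work still requires the boundary version of Theorem~\ref{theoremApproximation}, which you have not supplied.
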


Here \(\mathbf{t} v\) denotes the tangential component on \(\partial \Omega\) of the form \(v\) \cite{Schwarz1995}*{(1.2.25)}. The inequalities on the boundary are interpreted in the sense of traces.

The presentation of theorem~\ref{theoremBoundaryHodge} differs from that of theorem~\ref{theoremStrongHodge}. 
In the case of a domain with nontrivial topology writing \(u = v + d w\) is stronger than \(du = dv\); the latter statement was however more convenient to state the weaker theorem~\ref{theoremWeakHodge}.

If \(\Omega\) is a cube, the proof of theorem~\ref{theoremBoundaryHodge} relies on the counterpart of theorem~\ref{theoremMainClosed} on a cube which is based on the counterpart of theorem~\ref{theoremApproximation} on a cube \cite{BourgainBrezis2007}*{corollary 15}.
The surjective of the trace and of normal derivative operator \cite{LionsMagenes1961} allows to obtain \(u = 0\) on \(\partial \Omega\) (see also \citelist{\cite{BourgainBrezis2003}*{proof of theorem 2}\cite{AdamsFournier2003}*{theorem 5.19}}).
A partition of the unity allows to pass to a general smooth domain \cite{BrezisVanSchaftingen2007}*{lemma~3.3}.

Similarly, the counterpart of theorem~\ref{theoremWeakEstimate} is 

\begin{theorem}
\label{theoremWeakDomain}
Let \(\Omega \subset \R^n\) be a smooth domain and let \(\ell \in \{1, \dotsc, n - 1\}\). There exists \(C > 0\) such that for every \(f \in C^\infty (\Bar{\Omega}; \bigwedge^{\ell} \R^n)\) and every \(\varphi \in C^\infty (\Bar{\Omega}; \bigwedge^{n - \ell} \R^n)\), if \(df = 0\) and either \(\mathbf{t} f = 0\) or \(\mathbf{t} \varphi = 0\), then 
\[
  \Bigabs{\int_{\R^n} f \wedge \varphi}
  \le C \norm{f}_{\mathrm{L}^1} \bigl(\norm{d \varphi}_{\mathrm{L}^n} + \norm{\varphi}_{\mathrm{L}^n}\bigr).
\]
\end{theorem}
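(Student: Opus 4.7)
The plan is to adapt the duality argument that produced theorem~\ref{theoremWeakEstimate} from theorem~\ref{theoremWeakHodge}, with theorem~\ref{theoremBoundaryHodge} now in the role of the Hodge decomposition. The target is to write \(\varphi = u + d\psi\) on \(\Omega\) with \(\norm{u}_{\mathrm{L}^\infty}\) controlled by \(\norm{d\varphi}_{\mathrm{L}^n} + \norm{\varphi}_{\mathrm{L}^n}\) and with boundary conditions on \(\psi\) that make the Stokes boundary term vanish in each of the two trace cases. The new difficulty compared with the Euclidean setting is that theorem~\ref{theoremBoundaryHodge} controls \(u\) in terms of \(\norm{\varphi}_{\mathrm{W}^{1,n}}\), which involves the full gradient \(D\varphi\) rather than merely \(d\varphi\); this is circumvented by a preliminary Hodge decomposition of \(\varphi\) combined with a Gaffney inequality.

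First I would write \(\varphi = \tilde\varphi + d\eta\) on \(\Omega\) by Hodge theory: \(\tilde\varphi\) is co-closed, \(d^* \tilde\varphi = 0\), and, in the case \(\mathbf{t}\varphi = 0\), is chosen from the Dirichlet (relative) decomposition so that \(\mathbf{t}\tilde\varphi = 0\) and \(\mathbf{t}\eta = 0\); in the case \(\mathbf{t}f = 0\), any absolute Hodge decomposition works. Both summands are then controlled by \(\varphi\) in \(\mathrm{L}^n\). The Gaffney inequality on the smooth domain \(\Omega\), applied to \(\tilde\varphi\) with its boundary condition, yields
\[
  \norm{D \tilde\varphi}_{\mathrm{L}^{n}} \le C \bigl( \norm{d \tilde\varphi}_{\mathrm{L}^{n}} + \norm{d^* \tilde\varphi}_{\mathrm{L}^{n}} + \norm{\tilde\varphi}_{\mathrm{L}^{n}} \bigr) \le C \bigl(\norm{d\varphi}_{\mathrm{L}^{n}} + \norm{\varphi}_{\mathrm{L}^{n}}\bigr),
\]
so that \(\tilde\varphi \in \mathrm{W}^{1, n}(\Omega; \textstyle\bigwedge^{n - \ell}\R^n)\). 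Applying theorem~\ref{theoremBoundaryHodge} to \(\tilde\varphi\) produces \(u \in \mathrm{L}^\infty \cap \mathrm{W}^{1, n}\) and \(w \in \mathrm{W}^{2, n}\) with \(\tilde\varphi = u + dw\) and
\[
  \norm{u}_{\mathrm{L}^{\infty}} + \norm{w}_{\mathrm{W}^{2, n}} \le C \norm{\tilde\varphi}_{\mathrm{W}^{1, n}} \le C \bigl(\norm{d\varphi}_{\mathrm{L}^{n}} + \norm{\varphi}_{\mathrm{L}^{n}}\bigr);
\]
in the case \(\mathbf{t}\varphi = 0\), the same theorem further delivers \(u = 0\) and \(w = 0\) on \(\partial \Omega\). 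Combining, \(\varphi = u + d(w + \eta)\).

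The conclusion follows from Stokes' formula together with \(df = 0\):
\[
  \int_{\Omega} f \wedge \varphi = \int_{\Omega} f \wedge u + (-1)^\ell \int_{\partial\Omega} f \wedge (w + \eta).
\]
The boundary integral vanishes in both scenarios: in the case \(\mathbf{t}f = 0\) it is trivially zero, while in the case \(\mathbf{t}\varphi = 0\) we have \(\mathbf{t} w = 0\) (because \(w = 0\) on \(\partial\Omega\)) and \(\mathbf{t} \eta = 0\) by construction, so that \(\mathbf{t}(w + \eta) = 0\). The asserted estimate is then immediate from \(\bigabs{\int_\Omega f \wedge u} \le \norm{f}_{\mathrm{L}^{1}} \norm{u}_{\mathrm{L}^{\infty}}\). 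The main obstacle is arranging the Hodge decomposition with boundary conditions matching those of theorem~\ref{theoremBoundaryHodge} on a domain of possibly nontrivial topology: any harmonic component that arises is finite-dimensional and can be absorbed by an orthogonal projection, but synchronising the traces of \(w\) and \(\eta\) so that the Stokes boundary term truly vanishes requires careful bookkeeping.
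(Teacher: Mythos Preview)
Your argument is correct and is essentially the first of the two routes the paper records: deduce theorem~\ref{theoremWeakDomain} by duality from the boundary Hodge decomposition of theorem~\ref{theoremBoundaryHodge}, which is precisely the engine behind theorem~\ref{theoremStrongDomain}. The paper interposes theorem~\ref{theoremStrongDomain} and then specialises (under \(df=0\) and the trace hypothesis the supremum term vanishes by Stokes); you simply collapse that intermediate step. Your preliminary Hodge decomposition plus Gaffney inequality, used to trade \(\norm{D\varphi}_{\mathrm{L}^n}\) for \(\norm{d\varphi}_{\mathrm{L}^n}+\norm{\varphi}_{\mathrm{L}^n}\), is the natural bridge and is implicit in the cited lemmas of \cite{BrezisVanSchaftingen2007}. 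One point to tighten: in the case \(\mathbf{t} f=0\), ``any absolute Hodge decomposition works'' is too loose, because Gaffney still requires a boundary condition on \(\tilde\varphi\). Take the Hodge--Morrey co-exact piece \(d^*\beta\) with \(\mathbf{n}\beta=0\), so that \(\mathbf{n}(d^*\beta)=0\) and Gaffney applies with the normal condition; the harmonic remainder is smooth and finite-dimensional and can be absorbed directly into the \(\mathrm{L}^\infty\) part, as you already hint.

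The paper also records a second, more elementary route you did not take: prove the half-space case by an even/odd reflection across \(\partial\R^n_+\) reducing to theorem~\ref{theoremWeakEstimate} on \(\R^n\), and then pass to a general smooth domain by local charts and a partition of unity. This avoids theorem~\ref{theoremBoundaryHodge} and the Gaffney inequality entirely, at the cost of choosing the parity of the reflection so that the extended \(f\) remains closed --- which is exactly where the split between the hypotheses \(\mathbf{t}f=0\) and \(\mathbf{t}\varphi=0\) enters. Your route is more structural; the reflection route is closer in spirit to the direct proof of theorem~\ref{theoremWeakEstimate}.
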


Theorem~\ref{theoremWeakDomain} is a consequence of the counterparts of theorem~\ref{theoremStrongEstimate} where the quantity \(\norm{df}_{\mathrm{W}^{-2,n / (n - 1)}}\) is replaced by a suitable dual quantity \cite{BrezisVanSchaftingen2007}*{lemmas 3.11 and 3.16}: 

\begin{theorem}
\label{theoremStrongDomain}
Let \(\Omega \subset \R^n\) be a smooth domain and let \(\ell \in \{1, \dotsc, n - 1\}\). There exists \(C > 0\) such that for every \(f \in C^\infty (\Bar{\Omega}; \bigwedge^{\ell} \R^n)\) and every \(\varphi \in C^\infty (\Bar{\Omega}; \bigwedge^{n - \ell} \R^n)\), then 
\begin{multline*}
  \Bigabs{\int_{\Omega} f \wedge \varphi }
  \le C \Bigl(\norm{f}_{\mathrm{L}^1} + \sup \Bigl\{\Bigabs{\int_{\Omega} f \wedge d\zeta} \st  \zeta \in C^\infty (\Bar{\Omega}) \\ \text{ and } \norm{D^2 \zeta}_{\mathrm{L}^n} \le 1 \Bigr\}\Bigr) \norm{D \varphi}_{\mathrm{L}^n}.
\end{multline*}
If moreover \(\mathbf{t} \varphi = 0\) on \(\partial \Omega\), then 
\begin{multline*}
  \Bigabs{\int_{\Omega} f \wedge \varphi }
  \le C \Bigl(\norm{f}_{\mathrm{L}^1} + \sup \Bigl\{\Bigabs{\int_{\Omega} f \wedge d\zeta} \st  \zeta \in C^\infty (\Bar{\Omega}), \mathbf{t} \zeta = 0 \text{ on \(\partial \Omega\)}\\\text{ and } \norm{D^2 \zeta}_{\mathrm{L}^n} \le 1 \Bigr\}\Bigr) \norm{D \varphi}_{\mathrm{L}^n}.
\end{multline*}
\end{theorem}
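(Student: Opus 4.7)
The plan is to derive the estimate from the boundary Hodge decomposition of Theorem~\ref{theoremBoundaryHodge}, applied to the test form $\varphi$; this is the direct analogue on a bounded smooth domain of the deduction of Theorem~\ref{theoremWeakEstimate} from Theorem~\ref{theoremWeakHodge} carried out in \S 1.2. First I apply Theorem~\ref{theoremBoundaryHodge} with $k = n-\ell$ to $\varphi$ in order to obtain $u \in \mathrm{W}^{1,n}(\Omega; \bigwedge^{n-\ell}\R^n) \cap \mathrm{L}^\infty(\Omega)$ and $w \in \mathrm{W}^{2,n}(\Omega; \bigwedge^{n-\ell-1}\R^n)$ satisfying $u = \varphi + dw$ together with $\norm{u}_{\mathrm{L}^\infty} + \norm{w}_{\mathrm{W}^{2,n}} \le C\norm{\varphi}_{\mathrm{W}^{1,n}}$.

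The decomposition gives $\int_\Omega f\wedge\varphi = \int_\Omega f\wedge u - \int_\Omega f\wedge dw$. The first term is controlled directly by H\"older's inequality, $\bigabs{\int_\Omega f\wedge u} \le \norm{f}_{\mathrm{L}^1}\norm{u}_{\mathrm{L}^\infty}$, producing the contribution involving $\norm{f}_{\mathrm{L}^1}$. The second term is handled by taking $\zeta := w/\norm{w}_{\mathrm{W}^{2,n}}$ as an admissible test form in the supremum defining the dual quantity, so that $\bigabs{\int_\Omega f\wedge dw} \le \norm{w}_{\mathrm{W}^{2,n}}\, \sup\{\bigabs{\int_\Omega f\wedge d\zeta} : \norm{D^2\zeta}_{\mathrm{L}^n}\le 1\}$. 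Summing, I obtain the desired inequality, except with $\norm{\varphi}_{\mathrm{W}^{1,n}}$ on the right-hand side in place of $\norm{D\varphi}_{\mathrm{L}^n}$.

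To upgrade the bound to $\norm{D\varphi}_{\mathrm{L}^n}$, I would replace $\varphi$ by $\varphi - c$ where $c$ is a suitably chosen constant form: the error $\int_\Omega f\wedge c$ is absorbed into the supremum by testing with affine $\zeta$ (for which $\norm{D^2\zeta}_{\mathrm{L}^n}=0\le 1$), so the dual quantity implicitly accounts for the constant mode of $\varphi$. Once $\varphi$ is taken to have mean zero, the Poincar\'e--Wirtinger inequality on the smooth bounded domain $\Omega$ gives $\norm{\varphi}_{\mathrm{W}^{1,n}}\le C\norm{D\varphi}_{\mathrm{L}^n}$, and the estimate follows.

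For the tangential version I invoke the second part of Theorem~\ref{theoremBoundaryHodge}: when $\mathbf{t}\varphi = 0$ on $\partial\Omega$, the decomposition can be arranged so that $u=0$ and $w=0$ on $\partial\Omega$, whence $\mathbf{t}\zeta = 0$ on $\partial\Omega$ and $\zeta$ is admissible in the restricted supremum. The main obstacle I foresee is the clean handling of the constant mode of $\varphi$: one has to justify that $\int_\Omega f\wedge c$ is indeed controlled by the dual quantity in the statement, which requires interpreting the supremum effectively modulo affine $\zeta$ (so that the sup is finite exactly when the constant contribution of $f$ is already cancelled).
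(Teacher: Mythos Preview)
Your approach is essentially the one the paper has in mind: it is the bounded-domain analogue of the duality argument showing the equivalence of theorems~\ref{theoremWeakHodge} and~\ref{theoremWeakEstimate} in \S1.2, and the proof in \cite{BrezisVanSchaftingen2007} proceeds along the same lines. Two points deserve clarification.

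For the first estimate, your handling of the constant mode is correct and can be stated more cleanly than your last paragraph suggests. Since every linear \(\zeta\) satisfies \(\norm{D^2\zeta}_{\mathrm{L}^n}=0\le 1\) and \(\lambda\zeta\) is admissible for every \(\lambda>0\), finiteness of the supremum forces \(\int_\Omega f\wedge c=0\) for every constant \((n-\ell)\)-form \(c\). Hence, when the right-hand side is finite, replacing \(\varphi\) by \(\varphi-\bar\varphi\) leaves the left-hand side unchanged, and Poincar\'e--Wirtinger gives \(\norm{\varphi-\bar\varphi}_{\mathrm{W}^{1,n}}\le C\norm{D\varphi}_{\mathrm{L}^n}\). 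Your ``obstacle'' is already resolved.

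For the tangential estimate there is a gap you did not flag: linear \(\zeta\) do \emph{not} satisfy \(\mathbf{t}\zeta=0\) on a curved boundary, so the affine trick is unavailable in the restricted supremum and you cannot absorb the constant mode there. The fix is that no absorption is needed. On a smooth bounded domain the outward normals span \(\R^n\), so the only constant \((n-\ell)\)-form with \(\mathbf{t}c=0\) on all of \(\partial\Omega\) is \(c=0\) (here \(\ell\ge 1\) is used). A standard Rellich compactness argument then yields a direct Poincar\'e inequality \(\norm{\varphi}_{\mathrm{L}^n}\le C\norm{D\varphi}_{\mathrm{L}^n}\) under the hypothesis \(\mathbf{t}\varphi=0\), upgrading the bound without touching the supremum. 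The remaining technicality---approximating \(w\in \mathrm{W}^{2,n}\) with \(w\vert_{\partial\Omega}=0\) by smooth \(\zeta\) with \(\mathbf{t}\zeta=0\)---is routine density.
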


Theorem~\ref{theoremWeakDomain} can also be deduced from theorem~\ref{theoremWeakEstimate}: first theorem~\ref{theoremWeakDomain} is proved on a half-space by a reflection argument (see also \citelist{\cite{AmroucheNguyen2011a}\cite{AmroucheNguyen2011b}\cite{AmroucheNguyen2012}}), then it is extended to a general domain by local charts and partition of the unity \cite{BrezisVanSchaftingen2007}*{remark 3.3} (see also \cite{Xiang2013}).

\subsection{Other Sobolev spaces}
Besides the Sobolev space \(\dot{\mathrm{W}}^{1, n} (\R^n)\), there are other spaces that just miss the embedding in \(\mathrm{L}^\infty (\R^n)\): the Sobolev spaces \(\dot{\mathrm{W}}^{k, n/k} (\R^n)\) for \(k < n\), the Sobolev--Lorentz spaces \(\dot{\mathrm{W}}^{k, n/k, q} (\R^n)\), the fractional Sobolev--Slobodetski\u \i{} spaces \(\dot{\mathrm{W}}^{s, n/s} (\R^n)\), the Besov spaces \(\dot{\mathrm{B}}^{s, n/s}_q (\R^n)\) and the Triebel--Lizorkin spaces \(\dot{\mathrm{F}}^{s, n/s}_q (\R^n)\) for \(s > 1\) and \(q \ge 1\).

Let us first observe that in theorems~\ref{theoremWeakEstimate} and \ref{theoremCirculation}, the \(\dot{\mathrm{W}}^{1, n}\) norm can be replaced by any stronger norm; the same is true for theorem~\ref{theoremWeakHodge} with any space that is embedded in \(\mathrm{L}^n\).
The other cases are not covered straightforwardly. An inspection of the proof of theorem~\ref{theoremWeakEstimate} shows that the main ingredients are an embedding on hyperplanes into H\"older-continuous functions and a Fubini type theorem. The latter property for Triebel--Lizorkin spaces \citelist{\cite{Triebel1983}*{Theorem 2.5.13}\cite{Bourdaud}*{th\'eor\`eme 2}\cite{RunstSickel1996}*{theorem 2.3.4/2}} allows to extend  theorem~\ref{theoremWeakEstimate} \citelist{\cite{BourgainBrezis2004}*{remark 1}\cite{VanSchaftingen2004Divf}*{remark~5} \cite{BourgainBrezis2007}*{Remark 11}\cite{VanSchaftingen2010}*{proposition 2.1}}.

\begin{theorem}
Let \(\ell \in \{1, \dotsc, n - 1\}\), \(s > 0\) and \(q > 0\). There exists \(C > 0\) such that for every \(f \in C^\infty_c (\R^n; \bigwedge^{\ell} \R^n)\) and every \(\varphi \in C^\infty_c (\R^n; \bigwedge^{n - \ell} \R^n)\), if \(df = 0\), then 
\[
  \Bigabs{\int_{\R^n} f \wedge \varphi}
  \le C \norm{f}_{\mathrm{L}^1} \norm{\varphi}_{\dot{\mathrm{F}}^{s, n/s}_q}.
\]
\end{theorem}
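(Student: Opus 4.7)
The plan is to adapt the direct proof of theorem~\ref{theoremWeakEstimate} presented above, replacing the Morrey--Sobolev embedding \(\dot{\mathrm{W}}^{1, n}(\R^{n - 1}) \hookrightarrow C^{0, 1/n}(\R^{n - 1})\) by the analogous supercritical Triebel--Lizorkin embedding on \(\R^{n - 1}\), and the trivial Fubini theorem for \(L^n\) by the Fubini property for Triebel--Lizorkin spaces cited in the paper.

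First I would reduce to \(\ell = n - 1\) and \(\varphi = \varphi_n\, dx_n\), split \(\R^n\) as \(\R^{n - 1} \times \R\) with coordinates \((y, t)\), and reproduce the two slice estimates and their Hölder-type interpolation from the direct proof, this time with exponent \(\alpha = s/n\) (assuming first \(s \in (0, n)\)):
\[
  \Bigabs{\int_{\R^{n - 1} \times \{t\}} f \wedge \varphi_n}
  \le C \norm{f}_{\mathrm{L}^1}^{s/n} \Bigl( \int_{\R^{n - 1} \times \{t\}} \abs{f} \Bigr)^{1 - s/n} \abs{\varphi_n(\cdot, t)}_{C^{0, s/n}(\R^{n - 1})}.
\]
Since \(s \cdot (n/s) = n > n - 1\), the space \(\dot{\mathrm{F}}^{s, n/s}_q(\R^{n - 1})\) is supercritical on the slice and the Morrey-type embedding for Triebel--Lizorkin spaces yields \(\dot{\mathrm{F}}^{s, n/s}_q(\R^{n - 1}) \hookrightarrow C^{0, s/n}(\R^{n - 1})\).

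Integrating in \(t\) and applying Hölder's inequality with exponents \(n/(n - s)\) and \(n/s\) leads to
\[
  \Bigabs{\int_{\R^n} f \wedge \varphi}
  \le C \norm{f}_{\mathrm{L}^1} \Bigl( \int_\R \norm{\varphi_n(\cdot, t)}_{\dot{\mathrm{F}}^{s, n/s}_q(\R^{n - 1})}^{n/s} \dif t \Bigr)^{s/n}.
\]
The conclusion then reduces to the hyperplane estimate
\[
  \int_\R \norm{\varphi_n(\cdot, t)}_{\dot{\mathrm{F}}^{s, n/s}_q(\R^{n - 1})}^{n/s} \dif t \le C \norm{\varphi_n}_{\dot{\mathrm{F}}^{s, n/s}_q(\R^n)}^{n/s},
\]
which I would derive from the one-dimensional Fubini property of Triebel--Lizorkin spaces cited in the text by applying it in each of the \(n - 1\) coordinate directions inside the slice and combining with classical \(L^{n/s}\)-Fubini in the \(t\) direction.

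The principal obstacle is this Fubini chaining: the cited property is stated as an equivalence between the full \(\dot{\mathrm{F}}^{s, n/s}_q(\R^n)\) norm and a sum of one-dimensional Triebel--Lizorkin slice norms with \(L^{n/s}\)-outer norms in the orthogonal directions, so deriving the \((n - 1)\)-dimensional hyperplane inequality requires iterating this decomposition and recombining. The remaining case \(s \ge n\) is handled by Sobolev embedding along the critical line \(\dot{\mathrm{F}}^{s, n/s}_q \hookrightarrow \dot{\mathrm{F}}^{s_0, n/s_0}_q\) for a fixed \(s_0 \in (0, n)\), which only strengthens the right-hand side. I expect the Hölder-type interpolation step itself to carry over verbatim from the direct proof without modification.
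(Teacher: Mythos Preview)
Your proposal is correct and follows essentially the same route as the paper: the paper does not give a detailed proof of this theorem but indicates that the direct proof of theorem~\ref{theoremWeakEstimate} adapts once the Morrey--Sobolev embedding on hyperplanes is replaced by the supercritical Triebel--Lizorkin embedding and the \(\mathrm{L}^n\)--Fubini step by the Fubini property for \(\dot{\mathrm{F}}^{s,n/s}_q\) cited from \citelist{\cite{Kaljabin1980}\cite{Triebel1983}*{theorem 2.5.13}\cite{RunstSickel1996}*{theorem 2.3.4/2}}. Your identification of the one technical point---passing from the one-dimensional Fubini equivalence to the hyperplane estimate by applying the \((n-1)\)-dimensional Fubini equivalence on each slice, integrating in \(t\), and recognizing the resulting terms as the \(j \ne n\) summands of the \(n\)-dimensional Fubini decomposition---is exactly how the chaining is carried out, and your reduction of the range \(s \ge n\) via the critical-line embedding is a clean way to dispose of that case.
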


The result can then be extended by classical embedding theorems to Sobolev--Lorentz spaces \(\mathrm{W}^{1, n, q} (\R^n)\) and Besov spaces \(\dot{\mathrm{B}}^{s, p}_q (\R^n)\) with \(q < \infty\) \citelist{\cite{VanSchaftingen2006BMO}*{remark~4.2}\cite{MitreaMitrea2009}\cite{VanSchaftingen2010}}. It is not known whether the results can be extended to the case \(q = \infty\). 

\begin{openproblem}[Critical estimate in Besov spaces \citelist{\cite{VanSchaftingen2010}*{open problem 1}\cite{VanSchaftingen2013}*{open problem 8.2}}]
Does there exist a constant \(C > 0\) such that for every \(f \in C^\infty_c (\R^n; \bigwedge^{\ell} \R^n)\) and every \(\varphi \in C^\infty_c (\R^n; \bigwedge^{n - \ell} \R^n)\), if \(df = 0\), then 
\[
  \Bigabs{\int_{\R^n} f \wedge \varphi}
  \le C \norm{f}_{\mathrm{L}^1} \norm{\varphi}_{\dot{\mathrm{B}}^{s, n/s}_\infty}?
\]
\end{openproblem}

\begin{openproblem}[Critical estimate in Sobolev--Lorentz spaces \citelist{\cite{BourgainBrezis2007}*{open problem 1}\cite{VanSchaftingen2010}*{open problem 2}\cite{VanSchaftingen2013}*{open problem 8.3}}]
Is there a constant \(C > 0\) such that for every \(f \in C^\infty_c (\R^n; \bigwedge^{\ell} \R^n)\) and every differential form \(\varphi \in C^\infty_c (\R^n; \bigwedge^{n - \ell} \R^n)\), if \(df = 0\), then 
\[
  \Bigabs{\int_{\R^n} f \wedge \varphi}
  \le C \norm{f}_{\mathrm{L}^1} \norm{D \varphi}_{\mathrm{L}^{n, \infty}}?
\]
\end{openproblem}

When \(\ell = 1\), by the embeddings of the Sobolev space \(\dot{\mathrm{W}}^{1,1} (\R^n)\) into the Besov space \(B^{s, n /(n + 1 - s)}_1 (\R^n)\) \cite{Kolyada}*{corollary 1} and into the Lorentz space \(\mathrm{L}^{\frac{n}{n - 1}, 1} (\R^n)\) \citelist{\cite{Alvino1977}\cite{Tartar1998}}, the inequalities holds \citelist{\cite{VanSchaftingen2010}}.

A positive answer to open problem 1 or open problem 2 would imply some limiting Sobolev type inequalities into \(\mathrm{L}^\infty\) which have been proved since \citelist{\cite{Mironescu2010}*{proposition 3}\cite{VanSchaftingen2013}*{p.~911}\cite{BousquetVanSchaftingen}}.

The extension of theorems~\ref{theoremStrongHodge} and \ref{theoremStrongEstimate} to the fractional case is more delicate. 
Theorem~\ref{theoremStrongHodge} has been extended when \(\ell = n - 1\) to some scale of Triebel--Lizorkin spaces \cite{BousquetMironescuRuss2013}.

\begin{theorem}
Let \(s \in (\frac{1}{2}, \frac{n}{2}]\) and \(q \in [2, n/s]\). If \(g \in F^{s-1,n/s}_q (\R^n; \bigwedge^{n} \R^n)\) and \(dg = 0\) in the sense of distributions, then there exists \(u \in \mathrm{L}^\infty (\R^n; \bigwedge^{n - 1} \R^n) \cap F^{s, n/s}_q (\R^n;\bigwedge^{n -1}\R^n) \), such that \(u\) is continuous and \(du = g\) in the sense of distributions.
Moreover,
\[
  \norm{u}_{\mathrm{L}^{\infty}} + \norm{u}_{F^{s, n /s}_q} \le C \norm{g}_{F^{s-1, n/s}_q}.
\] 
\end{theorem}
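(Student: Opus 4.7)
The plan is to mimic the proof of theorem~\ref{theoremStrongHodge} via theorem~\ref{theoremMainClosed}, with the \(\mathrm{L}^n\)-based approximation lemma (theorem~\ref{theoremApproximation}) replaced by its Triebel--Lizorkin analogue of Bousquet--Mironescu--Russ, which is precisely the result mentioned in the text after theorem~\ref{theoremApproximation}. The first step is to produce a preliminary primitive: since \(dg = 0\) and \(g \in F^{s-1, n/s}_q\), the Mikhlin--H\"ormander multiplier theorem on Triebel--Lizorkin spaces applied to \(d^*(-\Delta)^{-1}\) yields \(v \in F^{s, n/s}_q (\R^n; \bigwedge^{n-1} \R^n)\) with \(dv = g\) and \(\norm{v}_{F^{s, n/s}_q} \le C \norm{g}_{F^{s-1, n/s}_q}\).

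Next I would verify that the Bousquet--Mironescu--Russ approximation lemma applies to \(T (D) = d\) on \((n-1)\)-forms. The symbol \(d(\xi) \alpha = \xi \wedge \alpha\) takes values in \(\bigwedge^n \R^n\), which is one-dimensional, so \(\rank d(\xi) = 1\) for every \(\xi \ne 0\); this matches the rank-one hypothesis that confines their construction. Moreover \(d\) is adcanceling in the sense of definition~\ref{definitionAdcanceling}: for each nonzero \(\alpha \in \bigwedge^{n-1} \R^n\) one can pick \(\xi \ne 0\) with \(\xi \wedge \alpha = 0\), so \(\operatorname{span} \bigcup_{\xi \ne 0} \ker d(\xi) = \bigwedge^{n-1} \R^n\). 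Their lemma then yields, for every \(\varepsilon > 0\), a constant \(C_\varepsilon\) and a function \(u_0 \in C^\infty \cap \mathrm{L}^\infty \cap F^{s, n/s}_q\) with
\begin{gather*}
  \norm{d (v - u_0)}_{F^{s-1, n/s}_q} \le \varepsilon \norm{v}_{F^{s, n/s}_q},\\
  \norm{u_0}_{F^{s, n/s}_q} + \norm{u_0}_{\mathrm{L}^\infty} \le C_\varepsilon \norm{v}_{F^{s, n/s}_q}.
\end{gather*}

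Finally I would run the iteration exactly as in the proof of theorem~\ref{theoremMainClosed}. Choosing \(\varepsilon\) small enough that \(\norm{g - d u_0}_{F^{s-1, n/s}_q} \le \tfrac{1}{2} \norm{g}_{F^{s-1, n/s}_q}\), one reapplies the two preceding steps to the residual and builds inductively a sequence \((u_i)\) satisfying \(\norm{g - d u_i}_{F^{s-1, n/s}_q} \le 2^{-i} \norm{g}_{F^{s-1, n/s}_q}\) together with \(\norm{u_{i+1} - u_i}_{F^{s, n/s}_q} + \norm{u_{i+1} - u_i}_{\mathrm{L}^\infty} \le C \cdot 2^{-i} \norm{g}_{F^{s-1, n/s}_q}\). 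The sequence is Cauchy in \(F^{s, n/s}_q \cap \mathrm{L}^\infty\); its limit \(u\) solves \(du = g\), satisfies the stated bound, and is continuous as a uniform limit of smooth functions.

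The main obstacle is the Bousquet--Mironescu--Russ approximation lemma itself, whose proof via a delicate Littlewood--Paley decomposition is the serious technical input. Its current rank-one restriction is exactly what forces the present theorem to the case \(\ell = n - 1\); the hypotheses \(s > \frac{1}{2}\) and \(q \in [2, n/s]\) likewise reflect the range in which that construction and the ancillary fractional Hodge and embedding estimates on the \(F^{s, n/s}_q\) scale are available.
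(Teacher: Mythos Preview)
The paper does not give its own proof of this theorem; it merely states the result and attributes it to Bousquet--Mironescu--Russ \cite{BousquetMironescuRuss2013}. Your outline is faithful to the strategy the paper describes for the \(\dot{\mathrm{W}}^{1,n}\) case (theorem~\ref{theoremStrongHodge} via theorem~\ref{theoremMainClosed} via theorem~\ref{theoremApproximation}), with the approximation lemma replaced by its Triebel--Lizorkin analogue \cite{BousquetMironescuRuss2013}*{proposition 3.1}, and this is indeed the architecture of the Bousquet--Mironescu--Russ argument. Your identification of the rank-one constraint on \(T(D)=d\) as the reason for the restriction \(\ell = n-1\), and of the parameter range \(s>\tfrac12\), \(q\in[2,n/s]\) as coming from that approximation lemma, is accurate.
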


When \(s = \frac{n}{2}\) and \(q = 2\), the result goes back to V. Maz\cprime{}ya \cite{Mazya2007} (see also \citelist{\cite{Mironescu2010}\cite{MazyaShaposhnikova2009}}).

\subsection{Hardy inequalities}
Another question is whether the Sobolev inequality \eqref{ineqHodgeSobolev} has a corresponding Hardy inequality. A positive answer has been given by V. Maz\cprime{}ya \cite{Mazya2010} (see also \citelist{\cite{BousquetMironescu}\cite{BousquetVanSchaftingen}}):
\begin{equation}
\label{ineqHodgeHardy}
  \int_{\R^n} \frac{\abs{u (x)}}{\abs{x}}\dif x 
  \le C \int_{\R^n} \abs{d u} + \abs{d^* u}.
\end{equation}

H.\thinspace Castro, J.\thinspace  D{\'a}vila and Wang Hui have obtained another family of Hardy inequalities with cancellation phenomena \citelist{\cite{CastroWang2010}\cite{CastroDavilaWang2011}\cite{CastroDavilaWang2013}}:
\[
  \int_{\R^{n - 1} \times \R^+} \Bigabs{ D \Bigl( \frac{u (y)}{y_n}\Bigr)}\,dy
  \le C \int_{\R^{n - 1} \times \R^+} \abs{ D^2 u};
\]
their work is concerned in boundary singularities in the potential whereas we are concerned with point singularities.

\subsection{Larger classes of operators}
The Korn--Sobolev inequality of M.\thinspace{}J.\thinspace{} Strauss \cite{Strauss1973}
\begin{equation}
\label{ineqKornSobolev}
  \norm{u}_{\mathrm{L}^{n / (n - 1)}} \le C \norm{Du + (D u)^*}
\end{equation}
is a variant of the Gagliardo--Nirenberg--Sobolev inequality that plays a role in the study of maps of bounded deformation \citelist{\cite{AmbrosioCosciaDalMaso1997}\cite{TemamStrang1980}}. 
The components of the deformation tensor \(E u = (Du + (D u)^*)/2\) do not satisfy any first-order differential condition, so that theorem~\ref{theoremWeakEstimate} cannot be applied.
However, the tensor field \(Eu\) satisfies the Saint-Venant compatibility conditions:
\[
  \partial_{k} \partial_{l} (Eu)_{ij} + \partial_{i} \partial_{j} (Eu)_{kl}
  =\partial_{k} \partial_{j} (Eu)_{il} + \partial_{i} \partial_{l} (Eu)s_{kj}.
\]
In view of this H.\thinspace{}Brezis has suggested that theorem~\ref{theoremWeakEstimate} should hold when the derivative is replaced by higher-order conditions. This was proved for a class of 
second-order conditions \cite{VanSchaftingen2004ARB} (yielding an alternative proof to the Korn--Sobolev inequality \eqref{ineqKornSobolev} \citelist{\cite{VanSchaftingen2004ARB}\cite{BourgainBrezis2007}*{corollary 26}}) and for a class of higher-order conditions \cite{BourgainBrezis2007}*{corollary 14}.
The differential operators for which theorems~\ref{theoremWeakEstimate} and \ref{theoremStrongEstimate} hold have been characterized \cite{VanSchaftingen2013}*{theorem 1.4, proposition 2.1 and theorem 9.2}.

\begin{theorem}
\label{theoremCocanceling}
Let \(L(D)\) be a homogeneous differential operator on \(\R^n\) from \(E\) to \(F\).
The following conditions are equivalent
\begin{enumerate}[(i)]
\item 
\label{itemCocancelingEstimate} there exists \(C > 0\) such that for every \(f \in \mathrm{L}^1(\R^n; E)\) such that \(L(D)f=0\) and \(\varphi \in C^\infty_c(\R^n; E)\),
\[
  \Bigabs{\int_{\R^n} f \cdot \varphi} \le C \norm{f}_{\mathrm{L}^1} \norm{D \varphi}_{\mathrm{L}^n},
\]
\item 
\label{itemBourgainBrezis} for every \(f \in \mathrm{L}^1(\R^n; E)\), one has 
\(f \in\dot{\mathrm{W}}^{-1,\frac{n}{n - 1}} (\R^n; E)\) if and only if \(L(D)f\in \dot{\mathrm{W}}^{-1-k,\frac{n}{n - 1}} (\R^n; F);
\)
moreover, 
\[
  \norm{f}_{\dot{\mathrm{W}}^{-1, n/(n - 1)}} \le C \bigl(\norm{f}_{\mathrm{L}^1} + \norm{L(D)f}_{\dot{\mathrm{W}}^{-1-k, n/(n - 1)}}\bigr),
\]
\item 
\label{itemCocanceling0} for every \(f \in \mathrm{L}^1(\R^n; E)\) such that \(L(D)f=0\)
\[
  \int_{\R^n} f = 0,
\]
\item \label{itVanishDirac} for every \(e \in E\), if \(L(D)\, (\delta_0 e) =0\), then \(e = 0\),
\item \label{itemCocancelingCocanceling} \(L(D)\) is cocanceling.
\end{enumerate}
\end{theorem}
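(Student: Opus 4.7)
The plan is to establish the five conditions as a cycle $(\mathrm{v})\Rightarrow(\mathrm{ii})\Rightarrow(\mathrm{i})\Rightarrow(\mathrm{iii})\Rightarrow(\mathrm{iv})\Rightarrow(\mathrm{v})$. Four of these implications are elementary, dispatched by Fourier analysis, dilation, and duality; the genuinely difficult step is $(\mathrm{v})\Rightarrow(\mathrm{ii})$, which generalises theorem~\ref{theoremStrongEstimate} and for which I would invoke theorem~\ref{theoremMainClosed}, ultimately resting on the deep approximation theorem~\ref{theoremApproximation}.

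For the easy part, $(\mathrm{iv})\Leftrightarrow(\mathrm{v})$ is immediate since $\widehat{\delta_0 e}(\xi) = e$, so $L(D)(\delta_0 e) = 0$ as a distribution is equivalent to $L(\xi)e = 0$ for every $\xi \ne 0$, which by homogeneity is exactly $e \in \bigcap_{\xi\ne 0}\ker L(\xi)$. For $(\mathrm{iv})\Leftrightarrow(\mathrm{iii})$: given a nonzero $e \in \bigcap_{\xi\ne 0}\ker L(\xi)$, I choose $\rho \in C^\infty_c(\R^n)$ with $\int\rho = 1$ and set $f = \rho e$; then $\widehat{L(D)f}(\xi) = \hat\rho(\xi)L(\xi)e = 0$ while $\int f = e\ne 0$, contradicting (iii). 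Conversely, if $f\in \mathrm{L}^1$ and $L(D)f = 0$, then $\hat f$ is continuous and $L(\xi)\hat f(\xi) = 0$; the homogeneity $\ker L(\lambda\xi) = \ker L(\xi)$ combined with continuity of $\hat f$ at the origin forces $\int f = \hat f(0) \in \bigcap_{\xi\ne 0}\ker L(\xi) = \{0\}$. Finally $(\mathrm{ii})\Rightarrow(\mathrm{i})$ is immediate from the definition of $\dot{\mathrm{W}}^{-1,n/(n-1)}$ applied to those $f$ with $L(D)f = 0$.

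For $(\mathrm{i})\Rightarrow(\mathrm{iii})$ I would use a dilation argument. Given $f \in \mathrm{L}^1(\R^n;E)$ with $L(D)f = 0$, the rescalings $f_R(x) = R^n f(Rx)$ preserve $L(D)f_R = 0$ and $\|f_R\|_{\mathrm{L}^1} = \|f\|_{\mathrm{L}^1}$ by homogeneity of $L$, and converge in the distributional sense to $(\int f)\delta_0$. Testing (i) against a fixed $\varphi \in C^\infty_c(\R^n;E)$ and letting $R\to \infty$ yields
\[
  \Bigabs{\varphi(0)\cdot\int_{\R^n} f} \le C\|f\|_{\mathrm{L}^1}\|D\varphi\|_{\mathrm{L}^n}.
\]
Choosing $\varphi(x) = \bigl(\int f\bigr)\eta_\varepsilon(x)$ with $\eta_\varepsilon$ a smooth cutoff equal to $1$ near $0$, vanishing outside $B_1$, and interpolating as $\log(1/\abs{x})/\log(1/\varepsilon)$ on the intermediate annulus — so that $\eta_\varepsilon(0) = 1$ and $\|D\eta_\varepsilon\|_{\mathrm{L}^n}^n \sim \abs{\log\varepsilon}^{-(n-1)}\to 0$, the standard witness of $\dot{\mathrm{W}}^{1,n}\not\hookrightarrow \mathrm{L}^\infty$ — collapses the inequality to $\abs{\int f}^2 \le 0$, hence $\int f = 0$.

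The main obstacle is $(\mathrm{v})\Rightarrow(\mathrm{ii})$. I would mirror the deduction of theorem~\ref{theoremStrongEstimate} from theorem~\ref{theoremStrongHodge}: the span characterization following definition~\ref{definitionAdcanceling} converts the cocanceling hypothesis on $L$ into an adcanceling condition for a suitable first-order operator $T(D)$ associated to $L(D)^*$, placing us in the setting of theorem~\ref{theoremMainClosed}. Applying that theorem to each $\varphi \in C^\infty_c(\R^n;E)$ should yield a decomposition $\varphi = u + L(D)^*\zeta$ satisfying
\[
  \|u\|_{\mathrm{L}^\infty} + \|Du\|_{\mathrm{L}^n} + \|D^{k+1}\zeta\|_{\mathrm{L}^n} \le C\|D\varphi\|_{\mathrm{L}^n}.
\]
Pairing $f$ with this splitting bounds $\abs{\int f \cdot \varphi}$ by $\|f\|_{\mathrm{L}^1}\|u\|_{\mathrm{L}^\infty} + \abs{\langle L(D)f,\zeta\rangle}$, which is precisely the desired estimate; Hahn--Banach duality then delivers (ii). The true depth of the argument lies entirely in theorem~\ref{theoremApproximation}: no elementary proof of $(\mathrm{v})\Rightarrow(\mathrm{ii})$ is presently known, and producing one would resolve the open problem raised in the paper after theorem~\ref{theoremApproximation}.
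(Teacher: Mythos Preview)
Your cycle of easy implications is correct and matches the paper: the Fourier arguments for \((\mathrm{iv})\Leftrightarrow(\mathrm{v})\) and \((\mathrm{iii})\Leftrightarrow(\mathrm{iv})\), the trivial \((\mathrm{ii})\Rightarrow(\mathrm{i})\), and the dilation argument for \((\mathrm{i})\Rightarrow(\mathrm{iii})\) are all sound.

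The gap is in your sketch of \((\mathrm{v})\Rightarrow(\mathrm{ii})\). Theorem~\ref{theoremMainClosed} requires a \emph{first-order} adcanceling operator \(T(D)\), whereas \(L(D)\) may have arbitrary order \(k\). You assert that ``the span characterization following definition~\ref{definitionAdcanceling} converts the cocanceling hypothesis on \(L\) into an adcanceling condition for a suitable first-order operator \(T(D)\) associated to \(L(D)^*\)'', but this conversion is exactly the missing work and is not automatic: cocanceling of \(L\) says \(\bigcap_\xi \ker L(\xi)=\{0\}\), while adcanceling of \(T\) says \(\operatorname{span}\bigcup_\xi \ker T(\xi)=E\), and no direct dictionary produces a first-order \(T\) from a higher-order \(L\) without further construction. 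The paper flags this explicitly in the paragraph after definition~\ref{definitionCocanceling}: when \(\dim E=1\) one must ``decompos[e] in a suitable fashion higher-order differential operators'' in order to access theorem~\ref{theoremMainClosed} (the reference is to the proof of theorem~8 in \cite{VanSchaftingen2008}), and the vector case \(\dim E>1\) requires a further algebraic reduction to the scalar case (lemma~2.5 of \cite{VanSchaftingen2013}). So while your overall architecture is right and the deepest analytic input is indeed theorem~\ref{theoremApproximation}, there is a genuine algebraic layer---a two-step reduction from arbitrary \(L(D)\) to scalar, then to first order---standing between the cocanceling hypothesis and the hypotheses of theorem~\ref{theoremMainClosed}, and your sketch does not supply it.
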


The \emph{cocancellation} condition is a new condition that was introduced in order to solve this problem \cite{VanSchaftingen2013}*{definition 1.3}.

\begin{definition}
\label{definitionCocanceling}
Let \(L(D)\) be a homogeneous linear differential operator on \(\R^n\) from \(E\) to \(F\). The operator \(L(D)\) is \emph{cocanceling} if 
\[
  \bigcap_{\xi \in \R^n \setminus \{0\}} \ker L(\xi)=\{0\}.  
\]
\end{definition}

Theorem~\ref{theoremCocanceling} implies that the elements of the kernel of a differential operator acting  vector measures from on \(\R^n\)
define linear functionals on \(\dot{\mathrm{W}}^{1, n} (\R^n)\) if and only if the kernel does not contain any Dirac measure. In particular, if such a measure does not charge points, it does not charge sets of null \(\mathrm{W}^{1, n}\)--capacity.

The main analytical difficulty in theorem~\ref{theoremCocanceling} is proving \eqref{itemBourgainBrezis}.  When \(E = \R\) it can be proved by theorem~\ref{theoremMainClosed}; as the latter deals naturally only with first-order differential operators, this requires decomposing in a suitable fashion higher-order differential operators \cite{VanSchaftingen2008}*{proof of theorem 8}.
The weaker \eqref{itemCocancelingEstimate} can be obtained directly when \(E = \R\) following the lines of the proof of theorem~\ref{theoremWeakEstimate} \cite{VanSchaftingen2008}*{proof of theorem 5}; this argument has been adapted to the fractional case \cite{VanSchaftingen2008}*{(4)}.
The passage to the vector case is done by an algebraic construction
\cite{VanSchaftingen2013}*{lemma~2.5}.

The inequality \eqref{itemCocancelingEstimate} appears with \(L (D) = \Div^2\) in the homogenization of stiff heterogeneous plates \cite{BrianeCasadoDiaz2012}*{lemma~15}.

An interesting consequence is the characterization of the operators such that a Gagliardo--Nirenberg--Sobolev inequality or a Hardy inequality hold \citelist{\cite{VanSchaftingen2013}*{theorem 1.3 and proposition 6.1}\cite{BousquetVanSchaftingen}}.

\begin{theorem}
\label{theoremCanceling}
Let \(A(D)\) be an elliptic homogeneous linear differential operator of order
\(k\) on \(\R^n\) from \(V\) to \(E\). The following conditions are equivalent
\begin{enumerate}[(i)]
\item \label{itemCancelingSobolev} for every \(u \in C^\infty_c(\R^n; V)\),
\[
  \norm{D^{k-1}u }_{\mathrm{L}^{\frac{n}{n - 1}}} \le C\norm{A(D) u}_{\mathrm{L}^1},
\]
\item \label{itemCancelingHardy} for every \(u \in C^\infty_c(\R^n; V)\),
\[
  \int_{\R^n} \frac{\abs{D^{k-1}u (x)}}{\abs{x}} \dif x \le C\norm{A(D) u}_{\mathrm{L}^1},
\]
\item\label{itCancelingVanishCinfty} for every \(u \in C^\infty(\R^n; V)\), if 
\(\supp A(D)u\) is compact, then 
\[
  \int_{\R^n} A(D) u = 0,
\]
\item \(A(D)\) is canceling.
\end{enumerate}
\end{theorem}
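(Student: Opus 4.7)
The plan is to prove the cycle (i)$\Rightarrow$(iii)$\Rightarrow$(iv)$\Rightarrow$(i), together with the parallel (ii)$\Rightarrow$(iii)$\Rightarrow$(iv)$\Rightarrow$(ii). The equivalence (iii)$\Leftrightarrow$(iv) is algebraic, (i)$\Rightarrow$(iii) and (ii)$\Rightarrow$(iii) are asymptotic, and the analytical heart is (iv)$\Rightarrow$(i), which I reduce to theorem~\ref{theoremCocanceling} by building an auxiliary cocanceling operator that annihilates the image of $A(D)$.

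For (iii)$\Leftrightarrow$(iv), I would work on the Fourier side. If $u \in C^\infty(\R^n;V)$ and $f = A(D)u$ has compact support then $\hat f$ is entire, and $A(\xi)\hat u(\xi) = \hat f(\xi)$ together with ellipticity forces $\hat f(\xi) \in A(\xi)[V]$ at every $\xi \ne 0$, so by continuity $\hat f(0) = \int f$ lies in $\bigcap_{\xi \ne 0} A(\xi)[V]$. If $A(D)$ is canceling this intersection is trivial, giving (iii). Conversely, given a nonzero $e$ in $\bigcap_{\xi \ne 0} A(\xi)[V]$ and $\chi \in C^\infty_c(\R^n)$ with $\int \chi = 1$, one may measurably invert $A(\xi)$ on $\hat \chi(\xi) e$ and Fourier-invert to produce a smooth $u$ with $A(D)u = \chi e$ and $\int A(D)u = e \ne 0$. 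The implications (i)$\Rightarrow$(iii) and (ii)$\Rightarrow$(iii) follow from the asymptotic behavior of solutions: for such $u$, modulo a polynomial $u = G \ast f$ with $G$ a fundamental solution of $A(D)$ homogeneous of degree $k - n$ at infinity, so that $D^{k-1}u(x)$ behaves like $|x|^{1-n} \Omega(x/|x|)$ with $\Omega$ nontrivial when $\int f \ne 0$; this tail lies neither in $\mathrm{L}^{n/(n-1)}(\R^n)$ nor in $\mathrm{L}^1(\R^n; |x|^{-1}\dif x)$.

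The core is (iv)$\Rightarrow$(i). I would first produce, by linear algebra over the polynomial ring $\R[\xi]$, a homogeneous differential operator $L(D)$ from $E$ to some finite-dimensional space $F$ such that the symbol sequence
\[
  V \xrightarrow{A(\xi)} E \xrightarrow{L(\xi)} F
\]
is exact for every $\xi \ne 0$. Since $\ker L(\xi) = A(\xi)[V]$ for such $\xi$, the canceling assumption (iv) translates into
\[
  \bigcap_{\xi \ne 0} \ker L(\xi) = \bigcap_{\xi \ne 0} A(\xi)[V] = \{0\},
\]
which is exactly the cocanceling condition of definition~\ref{definitionCocanceling} applied to $L(D)$. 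Setting $f = A(D)u$ for $u \in C^\infty_c(\R^n;V)$ gives $L(D) f = 0$ identically, so the Bourgain--Brezis estimate \eqref{itemCocancelingEstimate} of theorem~\ref{theoremCocanceling} applies to $f$. To convert this into the bound on $D^{k-1}u$ I would argue by duality: for a test field $\psi \in C^\infty_c(\R^n)$ with $\norm{\psi}_{\mathrm{L}^n} \le 1$, invert $A(D)$ modulo polynomials to write
\[
  \int_{\R^n} D^{k-1} u \cdot \psi = \int_{\R^n} f \cdot K \psi,
\]
where $K$ is a convolution operator whose kernel is homogeneous of degree $-(n-1)$; classical Calder\'on--Zygmund theory then controls $\norm{D(K\psi)}_{\mathrm{L}^n} \le C \norm{\psi}_{\mathrm{L}^n}$, at which point theorem~\ref{theoremCocanceling} closes the loop and yields (i).

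The two main obstacles are the algebraic construction of the exact complement $L(D)$, which requires a Hilbert basis argument on $\R[\xi]$-modules of polynomial vectors and is the place where the finite-dimensionality of $V$ and $E$ is essential; and the analytic fact that $K\psi$ is not in general compactly supported, so a truncation and regularization argument is needed to legitimately plug it into theorem~\ref{theoremCocanceling}. The Hardy implication (iv)$\Rightarrow$(ii) follows the same scheme with the $\mathrm{L}^n$ dual replaced by the dual of the weighted $\mathrm{L}^1$-norm with weight $|x|^{-1}$, relying on a Hardy-type strengthening of theorem~\ref{theoremCocanceling} in the spirit of \cite{BousquetVanSchaftingen}.
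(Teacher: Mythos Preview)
Your proposal is correct and follows essentially the same route as the paper: the analytical core is the construction of a homogeneous cocanceling operator \(L(D)\) with \(\ker L(\xi) = A(\xi)[V]\) for \(\xi \ne 0\) via a Hilbert basis argument over \(\R[\xi]\) (this is exactly the Ehrenpreis result the paper invokes), after which theorem~\ref{theoremCocanceling} applied to \(f = A(D)u\) yields both \eqref{itemCancelingSobolev} and, with the Hardy variant of \cite{BousquetVanSchaftingen}, \eqref{itemCancelingHardy}. One small technical point to tighten: in your sketch of (iv)\(\Rightarrow\)(iii) you write \(A(\xi)\hat u(\xi) = \hat f(\xi)\), but \(u\) need not be tempered (this is precisely the content of the remark following the theorem); the remedy is to replace \(u\) by a tempered solution such as \(G * f\), and then use the homogeneity \(A(t\xi)[V] = A(\xi)[V]\) to pass from \(\hat f(\xi) \in A(\xi)[V]\) along rays to \(\hat f(0) \in \bigcap_{\xi \ne 0} A(\xi)[V]\), since mere continuity of \(\hat f\) does not give this on its own.
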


In \eqref{itCancelingVanishCinfty} it is essential to consider vector fields \(u\) that do not have compact support and do not tend to \(0\) too fast at infinity.
The ellipticity condition is the classical notion of ellipticity for overdetermined differential operators \citelist{\cite{Hormander1958}*{theorem 1}\cite{Spencer}*{definition 1.7.1}} (when \(\dim V = 1\), see also S.\thinspace Agmon \citelist{\cite{Agmon1959}*{\S 7}\cite{Agmon1965}*{definition 6.3}}).

\begin{definition}
A homogeneous linear differential operator \(A(D)\) on \(\R^n\) from \(V\) to \(E\) is \emph{elliptic} if for every \(\xi \in \R^n \setminus\{0\}\), 
\(A(\xi)\) is one-to-one.
\end{definition}

The cancellation is a new condition dual to cocancellation which was introduced to characterize such operators \cite{VanSchaftingen2013}*{definition 1.2}.

\begin{definition}
A homogeneous linear differential operator \(A(D)\) on \(\R^n\) from \(V\) to \(E\) is \emph{canceling} if 
\[
  \bigcap_{\xi \in \R^n \setminus\{0\}} A(\xi)[V]=\{0\}.
\]
\end{definition}

Theorem~\ref{theoremCanceling} gives in particular the Hodge--Sobolev inequality \eqref{ineqKornSobolev} and the Korn--Sobolev inequality \eqref{ineqHodgeSobolev} as the corresponding differential operators are canceling \cite{VanSchaftingen2013}*{propositions 6.4 and 6.6}. Further examples of higher-order canceling operators which are a higher-order analogue of the Hodge complex were recently given \cite{LanzaniRaich2014}.

The proof of \eqref{itemCancelingSobolev} and \eqref{itemCancelingHardy} are based on the construction of a differential operator such that for every \(\xi \in \R^n \setminus \{0\}\), \(\ker L (\xi) = A (\xi)[V]\) \cite{VanSchaftingen2013}*{proposition 4.2}.
The latter is a combination of a classical commutative algebra result of L.\thinspace Ehrenpreis \citelist{\cite{Ehrenpreis}\cite{Komatsu}*{theorem 2}\cite{Spencer}*{theorem 1.5.5}} which states that every submodule of the module of differential operators is finitely generated, and the application of ellipticity.

These estimate generalize to fractional Sobolev spaces and Sobolev Lorentz space as the Hodge estimates above. In particular, since the derivative on \(\R^n\) is canceling if and only \(n \ge 2\), we recover that the inequality \citelist{\cite{BourgainBrezisMironescu2004}*{lemma~D.1}\cite{SchmittWinkler}*{proposition 4}\cite{Solonnikov1975}*{theorem 2}\cite{Kolyada}*{theorem 4}\cite{CDDD}*{theorem 1.4}\cite{VanSchaftingen2013}*{corollary 8.2}}
\[
 \norm{u}_{\dot{\mathrm{W}}^{s, n/(n - (1-s))}} \le C \norm{D u}_{\mathrm{L}^1}
\]
holds for every \(u \in C^\infty_c(\R^n)\) if and only if \(n \ge 2\).

If \(A (D)\) is elliptic and canceling, then \cite{BousquetVanSchaftingen}
\[
 \norm{D^{k-n} u}_{\mathrm{L}^\infty} \le C \norm{A (D) u}_{\mathrm{L}^1}.
\]
The cancellation condition is not necessary \citelist{\cite{VanSchaftingen2013}*{remark 5.1}\cite{BousquetVanSchaftingen}}. Indeed, the derivative \(D\) on \(\R\) is not canceling but the inequality \(\norm{u}_{\mathrm{L}^\infty} \le \norm{u'}\) holds nevertheless.

The ellipticity assumption in theorem~\ref{theoremCanceling} is necessary for the first-order Sobolev inequality of \eqref{itemCancelingSobolev} \cite{VanSchaftingen2013}*{theorem 1.3 and corollary 5.2} and for Hardy--Sobolev inequalities \cite{BousquetVanSchaftingen}; it is not necessary for Hardy inequalities of the form \eqref{itemCancelingHardy} \cite{BousquetVanSchaftingen} nor for higher-order Sobolev inequalities \cite{VanSchaftingen2013}*{proposition 5.4}.

For the Hodge operator, \(A (D)=(\delta, d)\), theorem~\ref{theoremStrongDomain} implies that if either the tangential or the normal component vanishes on the boundary (either \(\mathbf{t} u = 0\) or \(\mathbf{n} u = 0\) on \(\partial \R^n_+\)), then 
\[
 \norm{u}_{\mathrm{L}^{n/(n - 1)}} \le C \bigl(\norm{du} + \norm{d^* u}\bigr).
\]
It would be interesting to define a class of canceling boundary conditions that ensure the validity of Sobolev estimates on half-spaces.

\begin{openproblem}
If \(A (D)\) is an elliptic canceling operator, under what boundary conditions on \(\partial \R^n_+ = \R^{n - 1} \times \{0\}\) does the inequality
\[
 \norm{D^{k-1}u }_{\mathrm{L}^{\frac{n}{n - 1}} (\R^n_+)} \le C\norm{A(D) u}_{\mathrm{L}^1 (\R^n_+)}
\]
hold?
\end{openproblem}

It would be natural to investigate boundary conditions that satisfy the Lopatinski\u \i--Shapiro ellipticity condition (also known as coerciveness or covering conditions) \citelist{\cite{Hormander1985}*{definition 20.1.1}\cite{Lopatinskii}\cite{Solonnikov1971}}.

The reader will have noted that we did not state in this section any counterpart of theorem~\ref{theoremWeakHodge} and \ref{theoremStrongHodge}.

\begin{openproblem}
State necessary and sufficient conditions on \(K (D)\) such that for every \(v \in \dot{\mathrm{W}}^{1, n} (\R^n; E)\), there exists \(u \in \dot{\mathrm{W}}^{1, n} (\R^n; E)\) such that \(K (D) u = K (D) v\) and 
\[
  \norm{D u}_{\mathrm{L}^n} + \norm{u}_{\mathrm{L}^\infty} \le C \norm{D v}_{\mathrm{L}^n}.
\]
\end{openproblem}

\subsection{Noncommutative situations}
A nilpotent homogeneous group \(G\) is a connected and simply connected Lie group whose Lie algebra \(\mathfrak{g}\) of left-invariant vector fields is graded, nilpotent and stratified:
\begin{enumerate}[(a)]
\item \(\mathfrak{g}=\mathfrak{g}_1 \oplus \mathfrak{g}_2 \oplus \dots \oplus \mathfrak{g}_p\),
\item \([\mathfrak{g}_i,\mathfrak{g}_j] \subset \mathfrak{g}_{i+j}\) for \(i+j \le p\) and \([V_i,V_j]=\{0\}\) if \(i+j > p\), 
\item \(\mathfrak{g}_1\) generates \(\mathfrak{g}\) by Lie brackets.
\end{enumerate}
These spaces are a good framework for defining homogeneous Sobolev spaces \cite{FollandStein1974}, Hardy spaces \cite{FollandStein1982} and studying singular integrals \cite{Stein1993}*{theorem XII.4}.
The homogeneous dimension \(Q=\sum_{j=1}^p j m_j\) plays an important role in properties of \(G\). 
In particular, the following Sobolev embedding holds for \(p < Q\) for the nonisotropic Sobolev space \(\mathrm{N\dot{W}}^{1, p} (G)\) \cites{Folland1975,FollandStein1974,RothschildStein1976}:
\[
  \mathrm{N\dot{W}}^{1,p} (G)=\{ u \in \mathrm{L}^p(G) \st D_b u \in \mathrm{L}^p(G)\}  \subset \mathrm{L}^{\frac{Qp}{Q-p}} (G)\;,
\]
where the horizontal derivative \(D_b u\) is the pointwise restriction of \(Du\) to the horizontal bundle \(T_b G = \mathfrak{g}_1\). 
As these embeddings are counterparts of the classical Sobolev embedding in the Euclidean space, theorem~\ref{theoremWeakEstimate} has a counterpart on homogeneous groups \cite{ChanilloVanSchaftingen2009}*{theorem 1}.

\begin{theorem}
\label{theoremGroups}
If \(f \in C^\infty_c (G; T_bG)\) is a section and for every \(\psi \in C^\infty_c(G)\),
\[
 \int_{G} \dualprod{D_b \varphi}{f} = 0,
\]
and if \(\varphi \in C^\infty_c(G;T^*_bG)\) is a section, then
\[
  \Bigl\lvert\int_{G} \dualprod{\varphi}{f}  \Bigr\rvert\le C \norm{f}_{\mathrm{L}^1(G)}\norm{\nabla_b \varphi}_{\mathrm{L}^Q(G)}\;.
\]
\end{theorem}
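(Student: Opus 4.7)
The plan is to adapt the direct proof of Theorem~\ref{theoremWeakEstimate} to the stratified setting, by slicing \(G\) along a horizontal one-parameter subgroup and replacing the Euclidean Morrey-Sobolev embedding on the slice by its nonisotropic counterpart of homogeneous dimension \(Q-1\).

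By linearity, decomposing \(\varphi\) along a left-invariant horizontal coframe, it suffices to prove the estimate when \(\varphi = \varphi_1 \omega_1\) with \(\varphi_1 \in C^\infty_c (G)\) and \(\omega_1\) dual to a fixed horizontal vector field \(X_1 \in \mathfrak{g}_1\). Using exponential coordinates adapted to the splitting of \(G\) along the one-parameter subgroup generated by \(X_1\), we can arrange that \(X_1 = \partial_{x_1}\) and foliate \(G\) by the affine hypersurfaces \(H_t = \{x_1 = t\}\). Since \(X_1\) has weight \(1\) in the grading, each slice \(H_t\) inherits a stratified structure of homogeneous dimension \(Q - 1\).

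On each slice \(H_t\) the trivial estimate
\[
  \Bigabs{\int_{H_t} \varphi_1 f_1}
  \le \norm{\varphi_1^t}_{\mathrm{L}^\infty (H_t)} \int_{H_t} \abs{f}
\]
holds, where \(f_1 = \dualprod{\omega_1}{f}\) and \(\varphi_1^t\) denotes the restriction of \(\varphi_1\) to \(H_t\). On the other hand, using the horizontal divergence-freeness of \(f\) and integrating by parts on the half-space \(G_t^- = \{x_1 < t\}\) after extending \(\varphi_1^t\) to be independent of \(x_1\), one obtains
\[
  \int_{H_t} \varphi_1 f_1
  = \int_{G_t^-} \sum_{i = 2}^{m_1} (X_i \varphi_1^t) f_i,
\]
and therefore
\[
  \Bigabs{\int_{H_t} \varphi_1 f_1}
  \le \norm{D_b \varphi_1^t}_{\mathrm{L}^\infty (G)} \norm{f}_{\mathrm{L}^1 (G)}.
\]
Interpolating these two bounds yields, for every \(\alpha \in (0, 1)\),
\[
  \Bigabs{\int_{H_t} \varphi_1 f_1}
  \le C \Bigl(\int_{H_t} \abs{f}\Bigr)^{1 - \alpha} \norm{f}_{\mathrm{L}^1 (G)}^\alpha [\varphi_1^t]_{C^{0, \alpha}},
\]
where the H\"older seminorm is taken with respect to the Carnot--Carath\'eodory distance induced on \(H_t\). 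Choosing \(\alpha = 1/Q\) and invoking the nonisotropic Morrey--Sobolev embedding on \(H_t\) (of homogeneous dimension \(Q - 1\) at the critical exponent \(Q\)) gives
\[
  \Bigabs{\int_{H_t} \varphi_1 f_1}
  \le C' \Bigl(\int_{H_t} \abs{f}\Bigr)^{1 - 1/Q} \norm{f}_{\mathrm{L}^1 (G)}^{1/Q} \Bigl(\int_{H_t} \abs{D_b \varphi_1}^Q\Bigr)^{1/Q}.
\]
Integrating in \(t \in \R\) and applying H\"older's inequality in the remaining variable yields the desired estimate.

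The main obstacle is the setup of the slicing: since \(G\) is noncommutative, the hyperplane \(H_t\) is not a subgroup, and constructing adapted coordinates in which \(X_1 = \partial_{x_1}\) while the slice carries a genuine graded structure of homogeneous dimension \(Q - 1\) requires a careful use of the stratification. A related delicate point is justifying the nonisotropic Morrey embedding on \(H_t\) with respect to the restricted Carnot--Carath\'eodory distance, for which one must check that horizontal connectivity and the scaling of balls on the slice are compatible with the ambient structure. Once these ingredients are in place, the interpolation and the concluding H\"older step proceed exactly as in the Euclidean argument.
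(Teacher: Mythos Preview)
Your overall architecture---slice along a horizontal one-parameter subgroup, obtain an \(\mathrm{L}^\infty\) bound and a Lipschitz-type bound on each slice, interpolate, then integrate and apply H\"older---is exactly the strategy the paper indicates. The gap is in the step you flag yourself as ``delicate'': the nonisotropic Morrey--Sobolev embedding on the slice \(H_t\). Your claim that \(H_t\) ``inherits a stratified structure of homogeneous dimension \(Q-1\)'' is false in general. The codimension-one normal subgroup \(H\) has Lie algebra \((\mathfrak{g}_1 \cap \mathfrak{h}) \oplus \mathfrak{g}_2 \oplus \dotsb \oplus \mathfrak{g}_p\), and there is no reason for \(\mathfrak{g}_1 \cap \mathfrak{h}\) to bracket-generate \(\mathfrak{h}\). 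Already on the Heisenberg group \(\mathbb{H}^1\), removing one horizontal generator \(X\) leaves only \(Y\), and \([Y,Y]=0\), so the remaining horizontal direction does not reach the center; the restricted sub-Riemannian distance on \(H_t\) degenerates and no Morrey embedding of the form you invoke is available. Interpreting the H\"older seminorm via the ambient distance does not save the argument either, since a Morrey inequality on \(G\) would require integrating \(\abs{D_b \varphi_1^t}^Q\) over all of \(G\), not over the slice.

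The paper's proof (following Chanillo--Van Schaftingen, lemma~2.1) replaces precisely this missing Morrey step by Jerison's lifting machinery: one shows that any smooth function on the normal subgroup \(H\) can be split, at each scale, as the sum of a function controlled in \(\mathrm{L}^\infty(H)\) and the restriction to \(H\) of a function on \(G\) whose \emph{ambient} horizontal derivative is controlled in \(\mathrm{L}^\infty(G)\). This splitting substitutes for the interpolation between \(\norm{\varphi_1^t}_{\mathrm{L}^\infty}\) and \(\norm{D_b\varphi_1^t}_{\mathrm{L}^\infty}\) without ever requiring \(H\) to carry its own stratified structure; the divergence-free condition is then used exactly as you describe, by extending the second piece off \(H\) and integrating by parts on the half-space. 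So the obstacle you identified is genuine, and Jerison's extension/splitting lemma is the missing ingredient rather than a technicality.
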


Theorem~\ref{theoremGroups} is proved following the strategy of the proof of theorem~\ref{theoremWeakEstimate}; the main point is to split with Jerison's machinery for analysis on nilpotent homogeneous groups \cite{Jerison1986} a function on a normal subgroup into a function which is controlled in \(\mathrm{L}^\infty\) and another which is the restriction of function with a horizontal derivative controlled in \(\mathrm{L}^\infty\) \cite{ChanilloVanSchaftingen2009}*{lemma~2.1}.
The proof also gives fractional estimates \cite{ChanilloVanSchaftingen2009}*{theorem 4}.
Theorem~\ref{theoremGroups} gives Gagliardo--Nirenberg--Sobolev inequalities for \((0, q)\) forms in the \(\Bar{\partial}_b\) complex of classes of CR manifolds \cite{Yung2010}*{theorems 2 and 3} and for involutive structures \cite{HouniePicon2011}. 

Following the ideas of \cite{VanSchaftingen2008}, theorem~\ref{theoremGroups} has a higher--order analogue in which \(f\) is a \emph{symmetric} tensor-field and the condition is replaced by a higher order condition \cite{ChanilloVanSchaftingen2009}*{theorem 5 and lemma~5.3}. 

\begin{theorem}
\label{theoremHigherOrderGroups}
If \(f \in C^\infty_c (G; \otimes^k T_bG)\) is a section and for every \(\psi \in C^\infty_c(G)\),
\[
 \int_{G} \dualprod{D_b^k \varphi}{f} = 0,
\]
and if \(\varphi \in C^\infty_c(G;\operatorname{Sym}^k (T_bG))\) is a section, then
\[
  \Bigl\lvert\int_{G} \dualprod{\varphi}{f}  \Bigr\rvert\le C \norm{f}_{\mathrm{L}^1(G)}\norm{\nabla_b \varphi}_{\mathrm{L}^Q(G)}\;.
\]
\end{theorem}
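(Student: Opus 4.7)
The plan is to extend the proof of Theorem~\ref{theoremGroups} to symmetric $k$-tensors by induction on $k$, following the scheme of \cite{VanSchaftingen2008} for higher-order cocanceling conditions on Euclidean space and adapting it to the stratified group $G$ via the analysis machinery of \cite{Jerison1986} that already underpins the first-order case. The base case $k = 1$ is Theorem~\ref{theoremGroups} itself, so the goal is to reduce the order of the cocanceling condition by one at the cost of introducing auxiliary tensor fields.

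The first step is algebraic. The hypothesis $\int_G \langle D_b^k \psi, f \rangle = 0$ for every scalar test $\psi$ says that the horizontal symbol $\widehat{f}(\xi)$ is, for every nonzero $\xi \in \mathfrak{g}_1$, perpendicular to $\operatorname{Sym}(\xi^{\otimes k})$ inside $\operatorname{Sym}^k \mathfrak{g}_1$. A purely linear-algebraic splitting in the spirit of \cite{ChanilloVanSchaftingen2009}*{lemma 5.3} and \cite{VanSchaftingen2013}*{lemma 2.5} then produces a decomposition $f = \sum_\alpha \operatorname{Sym}(X_\alpha \otimes g_\alpha)$, where $X_1, \dots, X_m$ is a basis of $\mathfrak{g}_1$, each $g_\alpha$ is a symmetric $(k-1)$-tensor field with $\|g_\alpha\|_{\mathrm{L}^1} \le C \|f\|_{\mathrm{L}^1}$, and each $g_\alpha$ inherits from $f$ a cocanceling condition of order $k - 1$ on $G$.

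The main obstacle lies in the analytic realization of this decomposition on $G$. Because horizontal vector fields do not commute on a nilpotent group, rewriting $D_b^k$ as $D_b$ composed with $D_b^{k-1}$ produces commutator terms living in the higher layers $\mathfrak{g}_j$ of the stratification. To control these, I would slice by cosets of a codimension-one normal subgroup $H$ chosen adapted to the direction $X_\alpha$ and apply Jerison's splitting on $H$, in parallel with \cite{ChanilloVanSchaftingen2009}*{lemma 2.1}, to write the relevant contractions of the test section $\varphi$ as a sum of an $\mathrm{L}^\infty$ piece and a piece whose horizontal derivative lies in $\mathrm{L}^\infty$. Integration by parts in the slicing direction then shifts one derivative from $\varphi$ onto $g_\alpha$, producing a first-order integral covered by Theorem~\ref{theoremGroups} together with commutator remainders which are of strictly lower order in $k$ and can therefore be absorbed by the inductive hypothesis.

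The conclusion then mirrors the direct proof of Theorem~\ref{theoremWeakEstimate}: the slicewise interpolation yields a H\"older-type bound on each coset, which after integration in the transversal direction and a final application of H\"older's inequality gives the global estimate $\bigabs{\int_G \langle \varphi, f \rangle} \le C \|f\|_{\mathrm{L}^1(G)} \|\nabla_b \varphi\|_{\mathrm{L}^Q(G)}$. The principal difficulty is not the interpolation but the careful bookkeeping of commutator corrections in the noncommutative integration by parts; this is the only genuinely new ingredient compared with the Euclidean higher-order argument of \cite{VanSchaftingen2008} and the first-order group argument of \cite{ChanilloVanSchaftingen2009}.
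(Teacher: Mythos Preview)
Your outline matches the approach the paper points to: the survey does not give a proof here but simply records that the result is obtained in \cite{ChanilloVanSchaftingen2009}*{theorem 5 and lemma~5.3} by transporting the higher-order scheme of \cite{VanSchaftingen2008} to the stratified-group setting, with Jerison's machinery replacing the Euclidean slicing. Your references and the overall architecture (reduce the order via an algebraic lemma on symmetric tensors, then feed into the first-order estimate with the coset-slicing and interpolation of \cite{ChanilloVanSchaftingen2009}*{lemma~2.1}) are exactly those the paper cites.

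One point to tighten: the sentence about ``the horizontal symbol \(\widehat{f}(\xi)\) for \(\xi \in \mathfrak{g}_1\)'' is Euclidean shorthand that does not literally make sense on a nilpotent group, and more importantly the decomposition \(f = \sum_\alpha \operatorname{Sym}(X_\alpha \otimes g_\alpha)\) does not by itself force each \(g_\alpha\) to satisfy an order-\((k-1)\) cocanceling condition. In the actual argument the reduction of order is not a clean induction of that form; lemma~5.3 of \cite{ChanilloVanSchaftingen2009} is used differently, and the slicing/extension step carries the higher-order condition directly rather than first producing lower-order cocanceling tensors. Your identification of the commutator bookkeeping as the genuinely noncommutative ingredient is correct, but make sure the inductive structure you write down is the one that actually closes.
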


Here \(\operatorname{Sym}^k (T_bG)\) denotes the bundle of symmetric \(k\)-linear maps on the horizontal bundle. It is not known whether the symmetry assumption is necessary for this inequality to hold \cite{ChanilloVanSchaftingen2009}*{open problem 1}.
This result was used to obtain the Gagliardo--Nirenberg inequality for forms in the Rumin complex \citelist{\cite{Rumin1994}\cite{Rumin1999}\cite{Rumin2000}\cite{Rumin2001}} for forms on the Heisenberg groups \(\mathbb{H}^1\) and \(\mathbb{H}^2\) \cite{BaldiFranchi}. Since the Rumin complex contains higher-order differential operators, the higher-order estimates play a crucial role in the proof. The method would require heavy explicit computations of complexes to be generalized to higher-order Heisenberg groups. 

The counterparts of the stronger theorems~\ref{theoremStrongHodge} and \ref{theoremStrongEstimate} have been obtained by Yi Wang and Po-Lam Yung \cite{WangYung} following the strategy of Bourgain and Brezis \cite{BourgainBrezis2007}.

The study of canceling and cocanceling operators on noncommutative homogeneous groups remains widely open.

\section{Coda: characterizing functions satisfying the estimates}

\subsection{The relationship with bounded mean oscillation}
The results above can all be thought as substitutes for the failing embedding of \(\dot{\mathrm{W}}^{1, n} (\R^n)\) into \(\mathrm{L}^\infty (\R^n)\) when \(n \ge 2\). 
A classical substitute for this embedding is the embedding of \(\dot{\mathrm{W}}^{1, n} (\R^n)\) into the space of functions of bounded mean oscillations \(\mathrm{BMO} (\R^n)\) \cite{BrezisNirenberg1995}*{example 1}.

H.\thinspace{}Brezis has suggested investigating the relationship between this embedding and theorem~\ref{theoremWeakEstimate} by studying  for \(\ell \in \{1, \dotsc, n - 1\}\) the space \(D_\ell (\R^{n})\) of measurable functions \(\varphi : \R^n \to \R\) for which the semi-norm
\[
\begin{split}
 \abs{\varphi}_{\mathrm{D}_{\ell}}
 =\sup \Bigl\{\Bigabs{\int_{\R^n} \varphi f \wedge e} : f \in C^\infty_c \bigl(\R^n; \textstyle{\bigwedge^\ell} \R^n\bigr),\; &e \in \textstyle\bigwedge^{n - \ell} \R^n \\
 &\text{ and } \int_{\R^n} \abs{e}\abs{f} \le 1\Bigr\}
 \end{split}
\]
is finite \cite{VanSchaftingen2006BMO}*{definition 2.4}.
Theorem~\ref{theoremWeakEstimate} is equivalent with the embedding
\[
 \dot{\mathrm{W}}^{1, n} (\R^n) \subset \mathrm{D}_{n - 1} (\R^n).
\]
By an observation of F. Bethuel, G. Orlandi and D. Smets, this embedding is strict \cite{BethuelOrlandiSmets2004}*{remark 5.4}.

These new spaces form a monotone family between the classical spaces \(\mathrm{L}^\infty (\R^n)\) and \(\mathrm{BMO} (\R^n)\) \cite{VanSchaftingen2006BMO}*{proposition 2.9, theorem 3.1, proposition 4.6, proposition 5.1, theorem 5.3},
\[
 \mathrm{L}^\infty (\R^n) \subsetneq \mathrm{D}_{n - 1} (\R^n) \subsetneq \dotsb \subsetneq \mathrm{D}_1 (\R^n) \subsetneq \mathrm{BMO} (\R^n).
\]
The embeddings are strict since 
\[
  \log \Bigl(\sum_{i = 1}^\ell \abs{x_i}^2 \Bigr) \in \mathrm{D}_k (\R^n)
\]
if and only if \(\ell \le n - k\) \cite{VanSchaftingen2006BMO}*{proposition 4.6}.
It can be also noticed that \(\mathrm{VMO} (\R^n)\) is not a subset of \(D_1 (\R^n)\) \cite{VanSchaftingen2006BMO}*{proposition 5.1}.

The conclusion of this study is that the estimates of theorem~\ref{theoremWeakEstimate} are stronger than the embedding of \(\dot{\mathrm{W}}^{1, n} (\R^n)\) into \(\mathrm{BMO} (\R^n)\).

\subsection{Strong charges}

It is possible to characterize the distributions \(F\) such that there exists \(f \in C (\R^n; \bigwedge^{n - 1} \R^n)\) such that \(df = F\) \citelist{\cite{DePauwPfeffer2008}*{theorem 4.8}\cite{Pfeffer2012}*{chapter 11}}.

\begin{theorem}
\label{theoremStrongCharges}
Let \(F : \R^n \to \bigwedge^n \R^n\) be a distribution. There exists \(f \in C (\R^n; \bigwedge^{n - 1} \R^n)\) such that \(F = df\) if and only if \(F\) is a strong charge.
\end{theorem}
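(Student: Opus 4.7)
The plan is to establish both directions by duality, pairing the Hahn-Banach theorem with the Riesz representation theorem.

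For the necessity (\emph{only if}) direction, assume \(F = df\) with \(f \in C(\R^n; \bigwedge^{n-1}\R^n)\). For any admissible test function \(\varphi\), Stokes' formula yields
\[
  \dualprod{F}{\varphi} = \dualprod{df}{\varphi} = (-1)^n \int_{\R^n} f \wedge d\varphi,
\]
and the continuity together with the local boundedness of \(f\) allow one to verify directly the continuity requirements built into the definition of a strong charge.

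For the sufficiency (\emph{if}) direction, which is the substantive part, I would define a linear functional \(L\) on the space of exact \(n\)-forms \(\mathcal{E} := \{d\varphi \st \varphi \in C^\infty_c(\R^n; \bigwedge^{n-1}\R^n)\}\) by
\[
  L(d\varphi) := (-1)^n \dualprod{F}{\varphi}.
\]
Well-definedness requires \(\dualprod{F}{\varphi} = 0\) whenever \(d\varphi = 0\), which one extracts from the charge axioms on \(F\) (noting that \(dF = 0\) automatically since \(F\) already has top degree). The strong charge hypothesis translates into an estimate of the form \(\abs{L(d\varphi)} \le C \norm{d\varphi}_{\mathrm{L}^\infty}\) (or a closely related uniform-type norm), so that Hahn-Banach extends \(L\) to a bounded linear functional on the closure of \(\mathcal{E}\) inside \(C_0(\R^n; \bigwedge^n\R^n)\) and then to all of \(C_0(\R^n; \bigwedge^n\R^n)\). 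The Riesz representation theorem then produces a representative \(f \in C(\R^n; \bigwedge^{n-1}\R^n)\) via \(\psi \mapsto \int_{\R^n} f \wedge \psi\), and unwinding the definition gives \(df = F\) distributionally.

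The main obstacle is matching the definition of \emph{strong} charge to an estimate sharp enough to ensure that the Riesz representative is a continuous form rather than merely a Radon measure; this is precisely what distinguishes a strong charge from a plain charge, and making it rigorous requires careful book-keeping of the continuity topology on the space of exact top-forms on which \(L\) acts. An alternative, more constructive route is to mollify \(F \mapsto F_\varepsilon\) (which is automatically closed since it has top degree), apply a suitable variant of theorem~\ref{theoremStrongHodge} to obtain smooth \(f_\varepsilon\) with \(df_\varepsilon = F_\varepsilon\) and controlled \(\mathrm{L}^\infty\) norm, and then pass to the limit as \(\varepsilon \to 0\) using the uniform \(C^0\) control furnished by the strong charge hypothesis. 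The delicacy is the same either way: identifying the precise sense in which the strong charge condition encodes pointwise continuity of \(f\) rather than mere boundedness almost everywhere.
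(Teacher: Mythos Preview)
The paper does not itself prove theorem~\ref{theoremStrongCharges}; it only cites \cite{DePauwPfeffer2008} and \cite{Pfeffer2012}. So there is no in-paper proof to compare against, but your proposal still contains a genuine gap that needs to be named.

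First, the degrees in your sufficiency argument are set up incorrectly. Since \(F\) is an \(n\)-form and you want \(df=F\) with \(f\) an \((n-1)\)-form, the equation is tested against \emph{scalar} functions \(\varphi\in C^\infty_c(\R^n)\), and integration by parts gives \(\int_{\R^n} F\,\varphi = \pm\int_{\R^n} f\wedge d\varphi\) with \(d\varphi\) a \(1\)-form. Thus your functional \(L\) should live on exact \(1\)-forms, not on exact \(n\)-forms; the Hahn--Banach target should be (a subspace of) compactly supported \(1\)-forms, and the representative you seek is the \((n-1)\)-form \(f\) paired against them via the wedge product. This is fixable book-keeping, but as written the setup does not parse.

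The substantive gap is the step ``The Riesz representation theorem then produces a representative \(f\in C(\R^n;\bigwedge^{n-1}\R^n)\).'' Riesz on \(C_0\) yields a Radon measure, not a continuous function, and a Hahn--Banach extension controlled by \(\norm{d\varphi}_{\mathrm{L}^1}\) alone would only produce \(f\in \mathrm{L}^\infty\). You correctly flag this as ``the main obstacle,'' but you do not resolve it. The whole point of the \emph{strong} charge definition is the interplay between the two terms \(C\norm{\varphi}_{\mathrm{L}^1}\) and \(\varepsilon\norm{d\varphi}_{\mathrm{L}^1}\) with \(\varepsilon\) arbitrarily small: this encodes, roughly, that \(F\) is a continuous linear functional on \(BV_c\) equipped with an inductive-limit topology finer than the one induced by total variation, and identifying the dual of that space with continuous \((n-1)\)-forms modulo closed ones is the actual work in \cite{DePauwPfeffer2008}. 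A bare Hahn--Banach/Riesz argument does not see this refinement. Your alternative mollification route also falls short: theorem~\ref{theoremStrongHodge} requires \(g\in \mathrm{L}^n\), whereas a strong charge need not lie in any Lebesgue space, so that theorem cannot be invoked on \(F_\varepsilon\) with uniform control as \(\varepsilon\to 0\).

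Your necessity sketch is on the right track: given continuous \(f\) and \(\varepsilon>0\), approximate \(f\) uniformly on \(B_R\) by a smooth \(g\) with \(\norm{f-g}_{\mathrm{L}^\infty(B_R)}<\varepsilon\), then split \(\int F\,\varphi = -\int (f-g)\wedge d\varphi + \int dg\,\varphi\) to get the estimate with \(C=\norm{dg}_{\mathrm{L}^\infty(B_R)}\).
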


Strong charges were introduced to solve this problem \citelist{\cite{DePauwPfeffer2008}\cite{Pfeffer2012}*{chapter 10}}.

\begin{definition}
The distribution \(F : \R^n \to \bigwedge^n \R^n\) is a \emph{strong charge} if for every \(\varepsilon > 0\) and every \(R > 0\), there exists \(C > 0\) such that if \(\varphi \in C^\infty_c (B_R)\),
\[
  \Bigabs{\int_{\R^n} F \wedge \varphi} \le C \norm{\varphi}_{L^1} + \varepsilon \norm{d \varphi}_{L^1}.
\]
\end{definition}

In particular, functions in \(L^n (\R^n)\) define strong charges \cite{DePauwPfeffer2008}*{proposition 2.9}. Theorem~\ref{theoremStrongCharges} provides thus an alternate proof of theorem~\ref{theoremWeakHodge}.
The case of \(\ell\) forms with \(\ell \in \{1, \dotsc, n - 2\}\) has also been studied \cite{DePauwMoonensPfeffer2009}.

\begin{bibdiv}
\begin{biblist}

\bib{AbatangeloTerracini2011}{article}{
   author={Abatangelo, Laura},
   author={Terracini, Susanna},
   title={Solutions to nonlinear Schr\"odinger equations with singular
   electromagnetic potential and critical exponent},
   journal={J. Fixed Point Theory Appl.},
   volume={10},
   date={2011},
   number={1},
   pages={147--180},
   issn={1661-7738},
}

\bib{AdamsFournier2003}{book}{
   author={Adams, Robert A.},
   author={Fournier, John J. F.},
   title={Sobolev spaces},
   series={Pure and Applied Mathematics (Amsterdam)},
   volume={140},
   edition={2},
   publisher={Elsevier/Academic Press, Amsterdam},
   date={2003},
   pages={xiv+305},
   isbn={0-12-044143-8},
}

\bib{Agmon1959}{article}{
   author={Agmon, Shmuel},
   title={The \(L_{p}\) approach to the Dirichlet problem. I. Regularity
   theorems},
   journal={Ann. Scuola Norm. Sup. Pisa (3)},
   volume={13},
   date={1959},
   pages={405--448},
}

\bib{Agmon1965}{book}{
   author={Agmon, Shmuel},
   title={Lectures on elliptic boundary value problems},
   series={Van Nostrand Mathematical Studies,
   No. 2},
   publisher={Van Nostrand}, 
   address={Princeton, N.J. -- Toronto -- London},
   date={1965},
   pages={v+291},
}

\bib{Alvino1977}{article}{
   author={Alvino, Angelo},
   title={Sulla diseguaglianza di Sobolev in spazi di Lorentz},
   journal={Boll. Un. Mat. Ital. A (5)},
   volume={14},
   date={1977},
   number={1},
   pages={148--156},
}

\bib{AmbrosioCosciaDalMaso1997}{article}{
   author={Ambrosio, Luigi},
   author={Coscia, Alessandra},
   author={Dal Maso, Gianni},
   title={Fine properties of functions with bounded deformation},
   journal={Arch. Rational Mech. Anal.},
   volume={139},
   date={1997},
   number={3},
   pages={201--238},
   issn={0003-9527},
}

\bib{AmroucheNguyen2011a}{article}{
   author={Amrouche, Ch{\'e}rif},
   author={Nguyen, Huy Hoang},
   title={New estimates for the div, curl, grad operators and elliptic
   problems with \(\mathrm{L}^1\)-data in the half-space},
   journal={Appl. Math. Lett.},
   volume={24},
   date={2011},
   number={5},
   pages={697--702},
   issn={0893-9659},
}

\bib{AmroucheNguyen2011b}{article}{
   author={Amrouche, Ch{\'e}rif},
   author={Nguyen, Huy Hoang},
   title={New estimates for the div-curl-grad operators and elliptic
   problems with \(\mathrm{L}^1\)-data in the whole space and in the half-space},
   journal={J. Differential Equations},
   volume={250},
   date={2011},
   number={7},
   pages={3150--3195},
   issn={0022-0396},
}

\bib{AmroucheNguyen2012}{article}{
   author={Amrouche, Ch{\'e}rif},
   author={Nguyen, Huy Hoang},
   title={Elliptic problems with \(\mathrm{L}^1\)-data in the half-space},
   journal={Discrete Contin. Dyn. Syst. Ser. S},
   volume={5},
   date={2012},
   number={3},
   pages={369--397},
   issn={1937-1632},
}

\bib{BaldiFranchi}{article}{
  title={Sharp a priori estimates for div-curl systems
in Heisenberg groups},
  author={Baldi, Annalisa },
  author = {Franchi, Bruno},   
  journal={J. Funct. Anal.},
  volume={265}, 
  number={10}, 
  date={2013}, 
  pages={2388--2419},
}

\bib{BethuelOrlandiSmets2004}{article}{
   author={Bethuel, F.},
   author={Orlandi, G.},
   author={Smets, D.},
   title={Approximations with vorticity bounds for the Ginzburg-Landau
   functional},
   journal={Commun. Contemp. Math.},
   volume={6},
   date={2004},
   number={5},
   pages={803--832},
   issn={0219-1997},
}

\bib{Bourdaud}{article}{
  author={Bourdaud, G.},
  title={Calcul fonctionnel dans certains espaces de Lizorkin--Triebel}, 
  journal={Arch. Math. (Basel)},
  volume={64},
  date={1995}, 
  number={1}, 
  pages={42--47},
}

\bib{BourgainBrezis2002}{article}{
   author={Bourgain, Jean},
   author={Brezis, Ha{\"{\i}}m},
   title={Sur l'\'equation \(\operatorname{div} u=f\)},
   journal={C. R. Math. Acad. Sci. Paris},
   volume={334},
   date={2002},
   number={11},
   pages={973--976},
   issn={1631-073X},
}

\bib{BourgainBrezis2003}{article}{
   author={Bourgain, Jean},
   author={Brezis, Ha{\"{\i}}m},
   title={On the equation \(\operatorname{div} Y=f\) and application to control of
   phases},
   journal={J. Amer. Math. Soc.},
   volume={16},
   date={2003},
   number={2},
   pages={393--426},
   issn={0894-0347},
}

\bib{BourgainBrezis2004}{article}{
   author={Bourgain, Jean},
   author={Brezis, Ha{\"{\i}}m},
   title={New estimates for the Laplacian, the div-curl, and related Hodge
   systems},
   journal={C. R. Math. Acad. Sci. Paris},
   volume={338},
   date={2004},
   number={7},
   pages={539--543},
   issn={1631-073X},
}

\bib{BourgainBrezis2007}{article}{
   author={Bourgain, Jean},
   author={Brezis, Ha{\"{\i}}m},
   title={New estimates for elliptic equations and Hodge type systems},
   journal={J. Eur. Math. Soc. (JEMS)},
   volume={9},
   date={2007},
   number={2},
   pages={277--315},
   issn={1435-9855},
}

\bib{BourgainBrezisMironescu2000}{article}{
   author={Bourgain, Jean},
   author={Brezis, Haim},
   author={Mironescu, Petru},
   title={Lifting in Sobolev spaces},
   journal={J. Anal. Math.},
   volume={80},
   date={2000},
   pages={37--86},
   issn={0021-7670},
}

\bib{BourgainBrezisMironescu2004}{article}{
   author={Bourgain, Jean},
   author={Brezis, Haim},
   author={Mironescu, Petru},
   title={\(H^{1/2}\) maps with values into the circle: minimal
   connections, lifting, and the Ginzburg-Landau equation},
   journal={Publ. Math. Inst. Hautes \'Etudes Sci.},
   number={99},
   date={2004},
   pages={1--115},
   issn={0073-8301},
}

\bib{BousquetMironescu}{article}{
   author={Bousquet, Pierre},
   author={Mironescu, Petru},
   title={An elementary proof of an inequality of Maz\cprime{}ya involving \(\mathrm{L}^1\) vector fields},
   book={
      editor = {Bonheure, Denis},
      editor = {Cuesta, Mabel},
      editor = {Lami Dozo, Enrique J. },
      editor = {Tak\'a\v c, Peter},
      editor = {Van Schaftingen, Jean},
      editor = {Willem, Michel},
    title={Nonlinear Elliptic Partial Differential Equations},
      publisher={American Mathematical Society},
      place={Providence, R. I.},
      series = {Contemporary Mathematics},
      volume = {540},
   },
   date={2011},
   pages={59-63},
}

\bib{BousquetMironescuRuss2013}{article}{
   author={Bousquet, Pierre},
   author={Mironescu, Petru},
   author={Russ, Emmanuel},
   title={A limiting case for the divergence equation},
   journal={Math. Z.},
   volume={274},
   date={2013},
   number={1-2},
   pages={427--460},
   issn={0025-5874},
}

\bib{BousquetVanSchaftingen}{article}{
    author= {Bousquet, Pierre},
    author = {Van Schaftingen, Jean},
    title={Hardy-Sobolev inequalities for vector fields and canceling linear differential operators},
    eprint={1305.4262},
}

\bib{Brezis2011}{book}{
   author={Brezis, Haim},
   title={Functional analysis, Sobolev spaces and partial differential
   equations},
   series={Universitext},
   publisher={Springer},
   place={New York},
   date={2011},
   pages={xiv+599},
   isbn={978-0-387-70913-0},
}

\bib{BrezisNguyen2011}{article}{
   author={Brezis, Ha{\"{\i}}m},
   author={Nguyen, Hoai-Minh},
   title={The Jacobian determinant revisited},
   journal={Invent. Math.},
   volume={185},
   date={2011},
   number={1},
   pages={17--54},
   issn={0020-9910},
}

\bib{BrezisNirenberg1995}{article}{
   author={Brezis, H.},
   author={Nirenberg, L.},
   title={Degree theory and BMO},
   part={I}, 
   subtitle={Compact manifolds without boundaries},
   journal={Selecta Math. (N.S.)},
   volume={1},
   date={1995},
   number={2},
   pages={197--263},
   issn={1022-1824},
}

\bib{BrezisVanSchaftingen2007}{article}{
   author={Brezis, Ha{\"{\i}}m},
   author={Van Schaftingen, Jean},
   title={Boundary estimates for elliptic systems with \(\mathrm{L}^1\)-data},
   journal={Calc. Var. Partial Differential Equations},
   volume={30},
   date={2007},
   number={3},
   pages={369--388},
   issn={0944-2669},
}

\bib{BrezisVanSchaftingen2008}{article}{
   author={Brezis, Ha{\"{\i}}m},
   author={Van Schaftingen, Jean},
   title={Circulation integrals and critical Sobolev spaces: problems of
   optimal constants},
   book={
      title={Perspectives in Partial Differential Equations, Harmonic
      Analysis and Applications},
      editor={Mitrea, Dorina},
      editor={Mitrea, Marius},
      series={Proc. Sympos. Pure Math.},
      volume={79},
      publisher={Amer. Math. Soc.},
      place={Providence, RI},
   },
   date={2008},
   pages={33--47},
}

\bib{BrianeCasadoDiaz2011}{article}{
   author={Briane, Marc},
   author={Casado-D{\'{\i}}az, Juan},
   title={Estimate of the pressure when its gradient is the divergence of a
   measure.\ Applications},
   journal={ESAIM Control Optim. Calc. Var.},
   volume={17},
   date={2011},
   number={4},
   pages={1066--1087},
   issn={1292-8119},
}

\bib{BrianeCasadoDiaz2012}{article}{
   author={Briane, Marc},
   author={Casado-D{\'{\i}}az, Juan},
   title={Homogenization of stiff plates and two-dimensional high-viscosity
   Stokes equations},
   journal={Arch. Ration. Mech. Anal.},
   volume={205},
   date={2012},
   number={3},
   pages={753--794},
   issn={0003-9527},
}

\bib{BrianeCasadoDiazMurat2009}{article}{
   author={Briane, M.},
   author={Casado-D{\'{\i}}az, J.},
   author={Murat, F.},
   title={The div-curl lemma~``trente ans apr\`es'': an extension and an
   application to the \(G\)-convergence of unbounded monotone operators},
   journal={J. Math. Pures Appl. (9)},
   volume={91},
   date={2009},
   number={5},
   pages={476--494},
   issn={0021-7824},
}

\bib{CalderonZygmund1952}{article}{
   author={Calder\'on, A. P.},
   author={Zygmund, A.},
   title={On the existence of certain singular integrals},
   journal={Acta Math.},
   volume={88},
   date={1952},
   pages={85--139},
}

\bib{CastroWang2010}{article}{
   author={Castro, Hern{\'a}n},
   author={Wang, Hui},
   title={A Hardy type inequality for \(\mathrm{W}^{m,1} (0,1)\) functions},
   journal={Calc. Var. Partial Differential Equations},
   volume={39},
   date={2010},
   number={3-4},
   pages={525--531},
   issn={0944-2669},
}

\bib{CastroDavilaWang2011}{article}{
   author={Castro, Hern{\'a}n},
   author={D{\'a}vila, Juan},
   author={Wang, Hui},
   title={A Hardy type inequality for \(\mathrm{W}^{2,1}_0(\Omega)\) functions},
   journal={C. R. Math. Acad. Sci. Paris},
   volume={349},
   date={2011},
   number={13-14},
   pages={765--767},
   issn={1631-073X},
}

\bib{CastroDavilaWang2013}{article}{
   author={Castro, Hern{\'a}n},
   author={D{\'a}vila, Juan},
   author={Wang, Hui},
   title={A Hardy type inequality for \(\mathrm{W}^{m,1}_0(\Omega)\) functions},
   journal={J. Eur. Math. Soc. (JEMS)},
   volume={15},
   date={2013},
   number={1},
   pages={145--155},
   issn={1435-9855},
}

\bib{ChanilloVanSchaftingen2009}{article}{
   author={Chanillo, Sagun},
   author={Van Schaftingen, Jean},
   title={Subelliptic Bourgain-Brezis estimates on groups},
   journal={Math. Res. Lett.},
   volume={16},
   date={2009},
   number={3},
   pages={487--501},
   issn={1073-2780},
}

\bib{ChanilloYung2012}{article}{
   author={Chanillo, Sagun},
   author={Yung, Po-Lam},
   title={An improved Strichartz estimate for systems with divergence free
   data},
   journal={Comm. Partial Differential Equations},
   volume={37},
   date={2012},
   number={2},
   pages={225--233},
   issn={0360-5302},
}

\bib{CDDD}{article}{
   author={Cohen, Albert},
   author={Dahmen, Wolfgang},
   author={Daubechies, Ingrid},
   author={DeVore, Ronald},
   title={Harmonic analysis of the space BV},
   journal={Rev. Mat. Iberoamericana},
   volume={19},
   date={2003},
   number={1},
   pages={235--263},
   issn={0213-2230},
}

\bib{CoifmanLionsMeyerSemmes1993}{article}{
   author={Coifman, R.},
   author={Lions, P.-L.},
   author={Meyer, Y.},
   author={Semmes, S.},
   title={Compensated compactness and Hardy spaces},
   journal={J. Math. Pures Appl. (9)},
   volume={72},
   date={1993},
   number={3},
   pages={247--286},
   issn={0021-7824},
}

\bib{ContiFaracoMaggi2005}{article}{
   author={Conti, Sergio},
   author={Faraco, Daniel},
   author={Maggi, Francesco},
   title={A new approach to counterexamples to \(\mathrm{L}^1\) estimates: Korn's
   inequality, geometric rigidity, and regularity for gradients of
   separately convex functions},
   journal={Arch. Ration. Mech. Anal.},
   volume={175},
   date={2005},
   number={2},
   pages={287--300},
   issn={0003-9527},
}

\bib{DePauwMoonensPfeffer2009}{article}{
   author={De Pauw, Thierry},
   author={Moonens, Laurent},
   author={Pfeffer, Washek F.},
   title={Charges in middle dimensions},
   journal={J. Math. Pures Appl. (9)},
   volume={92},
   date={2009},
   number={1},
   pages={86--112},
   issn={0021-7824},
}

\bib{DePauwPfeffer2008}{article}{
   author={De Pauw, Thierry},
   author={Pfeffer, Washek F.},
   title={Distributions for which \(\operatorname{div} v=F\) has a continuous
   solution},
   journal={Comm. Pure Appl. Math.},
   volume={61},
   date={2008},
   number={2},
   pages={230--260},
   issn={0010-3640},
}

\bib{Ehrenpreis}{article}{
   author={Ehrenpreis, Leon},
   title={A fundamental principle for systems of linear differential
   equations with constant coefficients, and some of its applications},
   conference={
      title={Proc. Internat. Sympos. Linear Spaces},
      address={Jerusalem},
      date={1960},
   },
   book={
      publisher={Jerusalem Academic Press},
      place={Jerusalem},
   },
   date={1961},
   pages={161--174},
}

\bib{FeffermanStein1972}{article}{
   author={Fefferman, C.},
   author={Stein, E. M.},
   title={\(H^{p}\) spaces of several variables},
   journal={Acta Math.},
   volume={129},
   date={1972},
   number={3-4},
   pages={137--193},
}

\bib{Folland1975}{article}{
   author={Folland, G. B.},
   title={Subelliptic estimates and function spaces on nilpotent Lie groups},
   journal={Ark. Mat.},
   volume={13},
   date={1975},
   number={2},
   pages={161--207},
   issn={0004-2080},
}

\bib{FollandStein1974}{article}{
   author={Folland, G. B.},
   author={Stein, Elias M.},
   title={Estimates for the \(\bar \partial _{b}\) complex and analysis on
   the Heisenberg group},
   journal={Comm. Pure Appl. Math.},
   volume={27},
   date={1974},
   pages={429--522},
   issn={0010-3640},
}

\bib{FollandStein1982}{book}{
   author={Folland, G. B.},
   author={Stein, Elias M.},
   title={Hardy spaces on homogeneous groups},
   series={Mathematical Notes},
   volume={28},
   publisher={Princeton University Press},
   place={Princeton, N.J.},
   date={1982},
   pages={xii+285},
   isbn={0-691-08310-X},
}

\bib{Gagliardo1958}{article}{
   author={Gagliardo, Emilio},
   title={Propriet\`a di alcune classi di funzioni in pi\`u variabili},
   journal={Ricerche Mat.},
   volume={7},
   date={1958},
   pages={102--137},
   issn={0035-5038},
}

\bib{GarroniLeoniPonsiglione2010}{article}{
   author={Garroni, Adriana},
   author={Leoni, Giovanni},
   author={Ponsiglione, Marcello},
   title={Gradient theory for plasticity via homogenization of discrete
   dislocations},
   journal={J. Eur. Math. Soc. (JEMS)},
   volume={12},
   date={2010},
   number={5},
   pages={1231--1266},
   issn={1435-9855},
}

\bib{Grothendieck1953}{article}{
   author={Grothendieck, A.},
   title={R\'esum\'e de la th\'eorie m\'etrique des produits tensoriels
   topologiques},
   journal={Bol. Soc. Mat. S\~ao Paulo},
   volume={8},
   date={1953},
   pages={1--79},
}

\bib{Hormander1958}{article}{
   author={H{\"o}rmander, Lars},
   title={Differentiability properties of solutions of systems of
   differential equations},
   journal={Ark. Mat.},
   volume={3},
   date={1958},
   pages={527--535},
   issn={0004-2080},
}

\bib{Hormander1985}{book}{
   author={H{\"o}rmander, Lars},
   title={The analysis of linear partial differential operators},
   part={III},
   series={Grundlehren der Mathematischen Wissenschaften},
   volume={274},
   subtitle={Pseudodifferential operators},
   publisher={Springer},
   place={Berlin},
   date={1985},
   pages={viii+525},
   isbn={3-540-13828-5},
}

\bib{HouniePicon2011}{article}{
   author={Hounie, J.},
   author={Picon, T.},
   title={Local Gagliardo-Nirenberg estimates for elliptic systems of vector
   fields},
   journal={Math. Res. Lett.},
   volume={18},
   date={2011},
   number={4},
   pages={791--804},
   issn={1073-2780},
}

\bib{Isobe2010}{article}{
   author={Isobe, Takeshi},
   title={Topological and analytical properties of Sobolev bundles},
   part={II},
   subtitle={Higher dimensional cases},
   journal={Rev. Mat. Iberoam.},
   volume={26},
   date={2010},
   number={3},
   pages={729--798},
   issn={0213-2230},
}

\bib{Iwaniec1995}{thesis}{
 author={Iwaniec, T.},
 title={Integrability Theory, and the Jacobians}, 
 type={Lecture Notes}, 
 institution={Universit\"at Bonn},
 date={1995},
}

\bib{Jerison1986}{article}{
   author={Jerison, David},
   title={The Poincar\'e inequality for vector fields satisfying
   H\"ormander's condition},
   journal={Duke Math. J.},
   volume={53},
   date={1986},
   number={2},
   pages={503--523},
   issn={0012-7094},
}

\bib{Kaljabin1980}{article}{
   author={Kaljabin, G. A.},
   title={Descriptions of functions from classes of Besov-Lizorkin-Triebel
   type},
   language={Russian},
   book={
     title={Studies in the theory of differentiable functions of several
   variables and its applications, VIII},
     series={Trudy Mat. Inst. Steklov.},
     volume={156},
    date={1980},
   },
   pages={82--109},
}

\bib{KirchheimKristensen2011}{article}{
   author={Kirchheim, Bernd},
   author={Kristensen, Jen},
   title={Automatic convexity of rank--1 convex functions},   
   journal={C. R. Math. Acad. Sci. Paris},
   volume={349},
   date={2011},
   number={7--8},
   pages={407--409},
   issn={1631-073X},
}

\bib{CKPaper}{unpublished}{
   author={Kirchheim, Bernd},
   author={Kristensen, Jen},
   title={On rank one convex functions that are homogeneous of degree one},
   note={in preparation},
}

\bib{Kolyada}{article}{
   author={Kolyada, V. I.},
   title={On the embedding of Sobolev spaces},
   language={Russian},
   journal={Mat. Zametki},
   volume={54},
   date={1993},
   number={3},
   pages={48--71, 158},
   issn={0025-567X},
   translation={
      journal={Math. Notes},
      volume={54},
      date={1993},
      number={3-4},
      pages={908--922 (1994)},
      issn={0001-4346},
   },
}

\bib{Kolyada2013}{article}{
   author={Kolyada, V. I.},
   title={On Fubini type property in Lorentz spaces},
   book={
      title={Recent Advances in Harmonic Analysis and Applications},
      editor={Bilyk, Dmitriy},
      editor={De Carli, Laura},
      editor={Petukhov, Alexander},
      editor={Stokolos, Alexander M.},
      editor={Wick, Brett D.},
      series={Springer proceedings in mathematics \& statistics},
      volume={25},
      date={2013},
      publisher={Springer}, 
   },
   pages={171-179},
}

\bib{Komatsu}{article}{
   author={Komatsu, Hikosaburo},
   title={Resolutions by hyperfunctions of sheaves of solutions of
   differential equations with constant coefficients},
   journal={Math. Ann.},
   volume={176},
   date={1968},
   pages={77--86},
   issn={0025-5831},
}

\bib{Lanzani2013}{article}{
  author={Lanzani, Loredana},
  journal={Bull. Inst. Math. Acad. Sin. (N.S.)},
  volume={8},
  date={2013}, 
  number={3}, 
  pages={389--398},
  title={Higher order analogues of exterior derivative},
}

\bib{LanzaniRaich2014}{incollection}{
   author={Lanzani, Loredana},
   author={Raich, A. S.},
   title={On div-curl for higher order},
   book={
      title={Advances in Analysis: The Legacy of Elias M. Stein},
      editor={Fefferman, Charles},
      editor={Ionescu, Alexandru D.},
      editor={Phong, D. H.},
      editor={Wainger, Stephen},
      series={Princeton Mathematical Series},
      date={2014},
      publisher={Princeton University Press},
      place={Princeton},
   },
   pages={245-272},
}

\bib{LanzaniStein2005}{article}{
   author={Lanzani, Loredana},
   author={Stein, Elias M.},
   title={A note on div curl inequalities},
   journal={Math. Res. Lett.},
   volume={12},
   date={2005},
   number={1},
   pages={57--61},
   issn={1073-2780},
}

\bib{LiebLoss}{book}{
  author={Lieb, Elliott H.},
  author={Loss, Michael},
  title={Analysis},
  series={Graduate Studies in Mathematics},
  volume={14},
  edition={2},
  publisher={American Mathematical Society},
  place={Providence, RI},
  date={2001},
  pages={xxii+346},
  isbn={0-8218-2783-9},
}

\bib{LionsMagenes1961}{article}{
   author={Lions, J.-L.},
   author={Magenes, E.},
   title={Problemi ai limiti non omogenei. III},
   journal={Ann. Scuola Norm. Sup. Pisa (3)},
   volume={15},
   date={1961},
   pages={41--103},
}

\bib{Lopatinskii}{article}{
   author={Lopatinski{\u\i}, Ya. B.},
   title={On a method of reducing boundary problems for a system of
   differential equations of elliptic type to regular integral equations},
   language={Russian},
   journal={Ukrain. Mat. \v Z.},
   volume={5},
   date={1953},
   pages={123--151},
   issn={0041-6053},
   translation={
     journal={Thirteen Papers on Differential Equations, Amer. Math. Soc. Transl. (2)},
     volume={89},
     date={1970},
     pages={149-183},
   }
}

\bib{Mazya2007}{article}{
   author={Maz\cprime{}ya, Vladimir},
   title={Bourgain--Brezis type inequality with explicit constants},
   book={
      title={Interpolation theory and applications},
      editor={De Carli, Laura},
      editor={Milman, Mario},
      series={Contemp. Math.},
      volume={445},
      publisher={Amer. Math. Soc.},
      place={Providence, RI},
   },
   date={2007},
   pages={247--252},
}

\bib{Mazya2010}{article}{
   author={Maz\cprime{}ya, Vladimir},
   title={Estimates for differential operators of vector analysis involving
   \(\mathrm{L}^1\)-norm},
   journal={J. Eur. Math. Soc. (JEMS)},
   volume={12},
   date={2010},
   number={1},
   pages={221--240},
   issn={1435-9855},
}

\bib{Mazya2011}{book}{
   author={Maz\cprime{}ya, Vladimir},
   title={Sobolev spaces with applications to elliptic partial differential
   equations},
   series={Grundlehren der Mathematischen Wissenschaften},
   volume={342},
   edition={2},
   publisher={Springer},
   place={Heidelberg},
   date={2011},
   pages={xxviii+866},
}

\bib{MazyaShaposhnikova2009}{article}{
   author={Maz\cprime{}ya, Vladimir},
   author={Shaposhnikova, Tatyana},
   title={A collection of sharp dilation invariant integral inequalities for
   differentiable functions},
   book={
      title={Sobolev spaces in mathematics},
      part={I},
      editor={Maz\cprime{}ya, Vladimir},
      editor={Isakov, Victor},
      series={Int. Math. Ser. (N. Y.)},
      volume={8},
      publisher={Springer},
      place={New York},
   },
   date={2009},
   pages={223--247},
}

\bib{Mironescu2010}{article}{
   author={Mironescu, Petru},
   title={On some inequalities of Bourgain, Brezis, Maz\cprime{}ya, and Shaposhnikova
related to \(\mathrm{L}^1\) vector fields},
   journal={C. R. Math. Acad. Sci. Paris},
   volume={348},
   date={2010},
   number={9--10},
   pages={513--515},
   issn={1631-073X},
}

\bib{MitreaMitrea2009}{article}{
   author={Mitrea, Irina},
   author={Mitrea, Marius},
   title={A remark on the regularity of the div-curl system},
   journal={Proc. Amer. Math. Soc.},
   volume={137},
   date={2009},
   number={5},
   pages={1729--1733},
   issn={0002-9939},
}

\bib{Murat1978}{article}{
   author={Murat, Fran{\c{c}}ois},
   title={Compacit\'e par compensation},
   journal={Ann. Scuola Norm. Sup. Pisa Cl. Sci. (4)},
   volume={5},
   date={1978},
   number={3},
   pages={489--507},
}

\bib{Nirenberg1959}{article}{
      author={Nirenberg, L.},
       title={On elliptic partial differential equations},
        date={1959},
     journal={Ann. Scuola Norm. Sup. Pisa (3)},
      volume={13},
       pages={115\ndash 162},
}

\bib{Ornstein1962}{article}{
   author={Ornstein, Donald},
   title={A non-equality for differential operators in the \(L_{1}\) norm},
   journal={Arch. Rational Mech. Anal.},
   volume={11},
   date={1962},
   pages={40--49},
   issn={0003-9527},
}

\bib{Pfeffer2012}{book}{
   author={Pfeffer, Washek F.},
   title={The divergence theorem and sets of finite perimeter},
   series={Pure and Applied Mathematics},
   publisher={CRC Press},
   place={Boca Raton, Flor.},
   date={2012},
   pages={xvi+243},
   isbn={978-1-4665-0719-7},
   doi={10.1201/b11919},
}


\bib{RothschildStein1976}{article}{
   author={Rothschild, Linda Preiss},
   author={Stein, E. M.},
   title={Hypoelliptic differential operators and nilpotent groups},
   journal={Acta Math.},
   volume={137},
   date={1976},
   number={3-4},
   pages={247--320},
   issn={0001-5962},
}

\bib{Rumin1994}{article}{
   author={Rumin, Michel},
   title={Formes diff\'erentielles sur les vari\'et\'es de contact},
   journal={J. Differential Geom.},
   volume={39},
   date={1994},
   number={2},
   pages={281--330},
   issn={0022-040X},
}

\bib{Rumin1999}{article}{
   author={Rumin, Michel},
   title={Differential geometry on C-C spaces and application to the
   Novikov-Shubin numbers of nilpotent Lie groups},
   journal={C. R. Acad. Sci. Paris S\'er. I Math.},
   volume={329},
   date={1999},
   number={11},
   pages={985--990},
   issn={0764-4442},
}

\bib{Rumin2000}{article}{
   author={Rumin, Michel},
   title={Sub-Riemannian limit of the differential form spectrum of contact
   manifolds},
   journal={Geom. Funct. Anal.},
   volume={10},
   date={2000},
   number={2},
   pages={407--452},
   issn={1016-443X},
}

\bib{Rumin2001}{article}{
   author={Rumin, Michel},
   title={Around heat decay on forms and relations of nilpotent Lie groups},
   book={
      series={S\'emin. Th\'eor. Spectr. G\'eom.},
      volume={19},
      publisher={Univ. Grenoble I},
      place={Saint},
   },
   date={2001},
   pages={123--164},
}

\bib{RunstSickel1996}{book}{
  author={Runst, T.},
  author={Sickel, W.}, 
  title={Sobolev spaces of fractional order, Nemytskij
  operators, and nonlinear partial differential equations}, 
  series={de Gruyter Series in Nonlinear Analysis and Applications},
  volume={3}, 
  publisher={de Gruyter},
  address={Berlin}, 
  date={1996},
}

\bib{SchmittWinkler}{thesis}{
  author={Schmitt, B.J.},
  author={Winkler, M.},
  title={On embeddings between \(BV\) and \(\dot{\mathrm{W}}^{s,p}\)},
  type={Preprint no.~6},
  date={2000-03-15},
  organization={Lehrstuhl~I f\"ur Mathematik, RWTH Aachen},
}

\bib{Schwarz1995}{book}{
   author={Schwarz, G{\"u}nter},
   title={Hodge decomposition---a method for solving boundary value
   problems},
   series={Lecture Notes in Mathematics},
   volume={1607},
   publisher={Springer},
   place={Berlin},
   date={1995},
   pages={viii+155},
   isbn={3-540-60016-7},
}

\bib{Smirnov1994}{article}{
   author={Smirnov, S. K.},
   title={Decomposition of solenoidal vector charges into elementary
   solenoids, and the structure of normal one-dimensional flows},
   journal={Algebra i Analiz},
   volume={5},
   date={1993},
   number={4},
   pages={206--238},
   issn={0234-0852},
   translation={
      journal={St. Petersburg Math. J.},
      volume={5},
      date={1994},
      number={4},
      pages={841--867},
      issn={1061-0022},
   },
}

\bib{Solonnikov1971}{article}{
   author={Solonnikov, V. A.},
   title={Overdetermined elliptic boundary value problems},
   language={Russian},
   journal={Zap. Nau\v cn. Sem. Leningrad. Otdel. Mat. Inst. Steklov.
   (LOMI)},
   volume={21},
   date={1971},
   pages={112--158},
   translation={
    journal={J. Sov. Math.},
    date={1973}, 
    volume={1}, 
    number={4}, 
    pages={477-512},
   },
}

\bib{Solonnikov1975}{article}{
   author={Solonnikov, V. A.},
   title={Inequalities for functions of the classes \(\dot{\mathrm{W}}^{\vec{m}}_p(\R^n)\)},
   journal={Zapiski Nauchnykh Seminarov Leningradskogo Otdeleniya Matematicheskogo Instituta im. V. A. Steklova Akademii Nauk SSSR},
   number={27},
   year={1972},
  language={Russian},
   pages={194-210},
   translation={
      journal={J. Sov. Math.},
      volume={3},
      date={1975},
      pages={ 549-564},
   },
}

\bib{Spencer}{article}{
   author={Spencer, D. C.},
   title={Overdetermined systems of linear partial differential equations},
   journal={Bull. Amer. Math. Soc.},
   volume={75},
   date={1969},
   pages={179--239},
   issn={0002-9904},
}

\bib{Stein1970}{book}{
   author={Stein, Elias M.},
   title={Singular integrals and differentiability properties of functions},
   series={Princeton Mathematical Series}, 
   volume={30},
   publisher={Princeton University Press},
   place={Princeton, N.J.},
   date={1970},
}

\bib{Stein1993}{book}{
   author={Stein, Elias M.},
   title={Harmonic analysis: real-variable methods, orthogonality, and
   oscillatory integrals},
   series={Princeton Mathematical Series},
   volume={43},
   publisher={Princeton University Press},
   place={Princeton, NJ},
   date={1993},
   pages={xiv+695},
   isbn={0-691-03216-5},
}

\bib{Strauss1973}{article}{
   author={Strauss, Monty J.},
   title={Variations of Korn's and Sobolev's equalities},
   conference={
      title={Partial differential equations},
      address={Univ. California, Berkeley, Calif.},
      date={1971}
   },	
   book={
      editor={D. C. Spencer},
      publisher={Amer. Math. Soc.},
      series={Proc. Sympos. Pure Math.},
      volume={23}, 
      place={Providence, R.I.},
   },
   date={1973},
   pages={207--214},
}

\bib{Strichartz1967}{article}{
   author={Strichartz, Robert S.},
   title={Multipliers on fractional Sobolev spaces},
   journal={J. Math. Mech.},
   volume={16},
   date={1967},
   pages={1031--1060},
}

\bib{Strichartz1968}{article}{
   author={Strichartz, Robert S.},
   title={Fubini-type theorems},
   journal={Ann. Scuola Norm. Sup. Pisa (3)},
   volume={22},
   date={1968},
   pages={399--408},
}

\bib{Tadmor}{article}{
  title={Hierarchical solutions for linear equations: a constructive proof of the closed range theorem},
  author={Tadmor, Eitan},
  eprint={arXiv:1003.1525},
}

\bib{Tartar1978}{article}{
   author={Tartar, Luc},
   title={Une nouvelle m\'ethode de r\'esolution d'\'equations aux
   d\'eriv\'ees partielles non lin\'eaires},
   conference={
      title={Journ\'ees d'Analyse Non Lin\'eaire},
      address={Besan\c con},
      date={1977},
   },
   book={
      series={Lecture Notes in Math.},
      volume={665},
      publisher={Springer},
      place={Berlin},
   },
   date={1978},
   pages={228--241},
}

\bib{Tartar1979a}{article}{
   author={Tartar, Luc},
   title={Compensated compactness and applications to partial differential
   equations},
   conference={
      title={Nonlinear analysis and mechanics: Heriot-Watt Symposium, Vol.
      IV},
   },
   book={
      series={Res. Notes in Math.},
      volume={39},
      publisher={Pitman},
      place={Boston, Mass.},
   },
   date={1979},
   pages={136--212},
}

\bib{Tartar1979b}{article}{
   author={Tartar, Luc},
   title={Homog\'en\'eisation et compacit\'e par compensation},
   conference={
      title={S\'eminaire Goulaouic-Schwartz (1978/1979)},
   },
   book={
      publisher={\'Ecole Polytech.},
      place={Palaiseau},
   },
   date={1979},
   pages={Exp. No. 9, 9},
}

\bib{Tartar1998}{article}{
   author={Tartar, Luc},
   title={Imbedding theorems of Sobolev spaces into Lorentz spaces},
   journal={Boll. Unione Mat. Ital. Sez. B Artic. Ric. Mat. (8)},
   volume={1},
   date={1998},
   number={3},
   pages={479--500},
   issn={0392-4041},
}

\bib{TemamStrang1980}{article}{
   author={Temam, Roger},
   author={Strang, Gilbert},
   title={Functions of bounded deformation},
   journal={Arch. Rational Mech. Anal.},
   volume={75},
   date={1980/81},
   number={1},
   pages={7--21},
   issn={0003-9527},
}

\bib{Triebel1983}{book}{
  author={Triebel, Hans}, 
  title={Theory of function spaces}, 
  series={Monographs in Mathematics},
  volume={78}, 
  publisher={Birkh\"auser}, 
  address={Basel}, 
  date={1983},
}

\bib{Triebel2001}{book}{
   author={Triebel, Hans},
   title={The structure of functions},
   series={Monographs in Mathematics},
   volume={97},
   publisher={Birkh\"auser},
   place={Basel},
   date={2001},
   pages={xii+425},
   isbn={3-7643-6546-3},
}

\bib{Uchiyama1982}{article}{
   author={Uchiyama, Akihito},
   title={A constructive proof of the Fefferman-Stein decomposition of BMO
   \((\mathbf{R}^{n})\)},
   journal={Acta Math.},
   volume={148},
   date={1982},
   pages={215--241},
}
		
\bib{VanSchaftingen2004BBM}{article}{
   author={Van Schaftingen, Jean},
   title={A simple proof of an inequality of {B}ourgain, {B}rezis and
  {M}ironescu},
   journal={C. R. Math. Acad. Sci. Paris},
   volume={338},
   date={2004},
   number={1},
   pages={23--26},
   issn={1631-073X},
}

\bib{VanSchaftingen2004Divf}{article}{
   author={Van Schaftingen, Jean},
   title={Estimates for \(\mathrm{L}^1\)-vector fields},
   journal={C. R. Math. Acad. Sci. Paris},
   volume={339},
   date={2004},
   number={3},
   pages={181--186},
   issn={1631-073X},
}

\bib{VanSchaftingen2004ARB}{article}{
   author={Van Schaftingen, Jean},
   title={Estimates for \(\mathrm{L}^1\) vector fields with a second order
   condition},
   journal={Acad. Roy. Belg. Bull. Cl. Sci. (6)},
   volume={15},
   date={2004},
   number={1-6},
   pages={103--112},
   issn={0001-4141},
}

\bib{VanSchaftingen2006BMO}{article}{
   author={Van Schaftingen, Jean},
   title={Function spaces between BMO and critical Sobolev spaces},
   journal={J. Funct. Anal.},
   volume={236},
   date={2006},
   number={2},
   pages={490--516},
}

\bib{VanSchaftingen2008}{article}{
   author={Van Schaftingen, Jean},
   title={Estimates for \(\mathrm{L}^1\) vector fields under higher-order
   differential conditions},
   journal={J. Eur. Math. Soc. (JEMS)},
   volume={10},
   date={2008},
   number={4},
   pages={867--882},
   issn={1435-9855},
}

\bib{VanSchaftingen2010}{article}{
   author={Van Schaftingen, Jean},
   title={Limiting fractional and Lorentz space estimates of differential
   forms},
   journal={Proc. Amer. Math. Soc.},
   volume={138},
   date={2010},
   number={1},
   pages={235--240},
   issn={0002-9939},
}

\bib{VanSchaftingen2013}{article}{
   author={Van Schaftingen, Jean},
   title={Limiting Sobolev inequalities for vector fields and canceling linear differential operators},
   journal={J. Eur. Math. Soc. (JEMS)},
   volume={15},
   date={2013}, 
   number={3}, 
   pages={877--921},
}

\bib{WangYung}{article}{
   author={Wang, Yi},
   author={Yung, Po-Lam},
   title={A subelliptic Bourgain-Brezis inequality},
   note={to appear in J. Eur. Math. Soc. (JEMS)},
}

\bib{Wojtasczyk1991}{book}{
   author={Wojtaszczyk, P.},
   title={Banach spaces for analysts},
   series={Cambridge Studies in Advanced Mathematics},
   volume={25},
   publisher={Cambridge University Press},
   place={Cambridge},
   date={1991},
   pages={xiv+382},
   isbn={0-521-35618-0},
}

\bib{Xiang2013}{article}{
   author={Xiang, Xingfei},
   title={\(\mathrm{L}^{3/2}\)-estimates of vector fields with \(\mathrm{L}^1\) curl in a
   bounded domain},
   journal={Calc. Var. Partial Differential Equations},
   volume={46},
   date={2013},
   number={1-2},
   pages={55--74},
   issn={0944-2669},
}

\bib{Yung2010}{article}{
   author={Yung, Po-Lam},
   title={Sobolev inequalities for \((0,q)\) forms on CR manifolds of finite
   type},
   journal={Math. Res. Lett.},
   volume={17},
   date={2010},
   number={1},
   pages={177--196},
   issn={1073-2780},
}


\end{biblist}

\end{bibdiv}

\end{document}